\newtheorem{theorem}[subsection]{Theorem}
\newtheorem{example}[subsection]{Example}
\newtheorem{definition}[subsection]{Definition}
\newtheorem{lemma}[subsection]{Lemma}
\newtheorem{remark}[subsection]{Remark}
\newtheorem{proposition}[subsection]{Proposition}
\newtheorem{corollary}[subsection]{Corollary}
\newtheorem*{claim*}{Claim}
\newtheorem*{theorem*}{Theorem}
\def\bal{\begin{aligned}}
\def\eal{\end{aligned}}
\def\be{\begin{equation}\label}
\def\ee{\end{equation}}
\def\bcs{\begin{cases}}
\def\ecs{\end{cases}}
\def\={\;=\;}
\def\+{\,+\,}
\def\-{\,-\,}
\def\Z{{\mathbb Z}}
\def\Q{{\mathbb Q}}
\def\R{{\mathbb R}}
\def\F{{\mathbb F}}
\def\T{{\mathbb T}}
\def\lb{\llbracket}
\def\rb{\rrbracket}
\def\ord{\mathrm{ord}}
\def\scrC{\mathscr{C}}
\def\scrR{\mathscr{R}}
\def\scrD{\mathscr{D}}
\def\scrV{\mathscr{V}}
\def\scrF{\mathscr{F}}
\def\Tr{{\rm Tr}}
\def\Teich{{\rm Teich}}
\def\cartier{\mathscr{C}_p}
\def\dwork{\mathscr{W}_f}
\def\span{{\rm Span}}
\def\v#1{{\bf #1}}
\def\is{\equiv}
\def\mod#1{({\rm mod}\ #1)}
\def\hat{\widehat}
\newcommand\corr[1]{{\color{blue}#1}}
\title{Dwork crystals I}
\author{Frits Beukers, Masha Vlasenko}
\address{Utrecht University }
\email{f.beukers@uu.nl}
\address{Institute of Mathematics of the Polish Academy of Sciences}
\email{m.vlasenko@impan.pl}
\thanks{
Work of Frits Beukers was supported by the Netherlands Organisation for Scientific Research (NWO), grant TOP1EW.15.313. Work of Masha Vlasenko was supported by the National Science Centre of Poland (NCN), grant UMO-2016/21/B/ST1/03084.}
\begin{document}
\maketitle

\section{Introduction}

In his study of zeta-functions of families of algebraic varieties Dwork discovered
a number of remarkable congruences for truncated solutions of Picard--Fuchs equations.
For example, let
\[
F(z)= \frac1\pi\int_1^\infty \frac{dx}{\sqrt{x(x-1)(x-z)}}=\sum_{k\ge0}\left(\frac{(1/2)_k}{k!}\right)^2z^k 
\]
be the period function  associated to the Legendre family of elliptic curves $y^2=x(x-1)(x-z)$. Here $(1/2)_k$ 
denotes the Pochhammer symbol $\Gamma(k+1/2)/\Gamma(1/2)$. Let $p$ be an
odd prime and $s$ a positive integer. Let $F_{p^s}$ be the truncation of $F$ given
by
\[
F_{p^s}(z)=\sum_{k=0}^{p^s-1}\left(\frac{(1/2)_k}{k!}\right)^2z^k. 
\]
Let $z_0$ be a $p$-adic integer and suppose $F_p(z_0)$ is a $p$-adic unit. Then
 $F_{p^s}(z_0)$ is a $p$-adic unit for all $s\ge1$ and we have
\[
F_{p^{s+1}}(z_0)/F_{p^s}(z_0)\is F_{p^s}(z_0)/F_{p^{s-1}}(z_0)\mod{p^s}.
\]
The $p$-adic unit 
$\lambda(z_0) = (-1)^{\frac{p-1}2} \underset{s \to \infty}\lim F_{p^s}(z_0)/F_{p^{s-1}}(z_0)$
is a root of the zeta function of the elliptic curve corresponding to $z_0 \mod p$. (Though it looks sligtly different, this fact is a version of~\cite[(6.29)]{dwork69}.)

In a series of papers culminating in \cite[Theorem 6.2]{Ka85} Katz developed a general theory
of such congruences and their underlying mechanism. However, his congruences 
involve formal expansion coefficients of differential forms instead of truncated power series
solutions of a differential equation. In this paper we consider a third alternative, namely coefficients
of certain powers of the polynomial defining a variety. For example,
in the case of the Legendre elliptic curve they are given by 
\[
G_{p^s}(z)=\mbox{coefficient of $(xy)^{p^s-1}$ of }
(y^2-x(x-1)(x-z))^{p^s-1}.
\]
Although different from $F_{p^s}(z)$, they both satisfy the hypergeometric differential equation
modulo $p^s$. The congruences read
\[
G_{p^{s+1}}(z_0)/G_{p^s}(z_0)\is G_{p^s}(z_0)/G_{p^{s-1}}(z_0)\mod{p^s}
\]
and the quotients converge to the  $p$-adic unit root $\lambda(z_0)$. 
In this paper we shall deal with a generalized version of the congruences of
the latter type. A number of ideas in this paper are already present in \cite{Ka85},
but in a very different language. There will also be no smoothness assumptions
on the underlying variety. 
We plan to come back to the case of truncated power series solutions
in a later paper.

Let $R$ be a ring of characteristic zero and $p$ a prime number.
Suppose that we have a $p$th power Frobenius lift on $R$ which is a ring endomorphism $\sigma:R\to R$ with the
property that $\sigma(r)\is r^p\mod{p}$ for all $r\in R$. For example, when
$R=\Z$ is the ring of integers we can take $\sigma(r)=r$ for all $r$.
When $R=\Z[t]$ is a polynomial ring we can take $\sigma(g(t))=g(t^p)$. 

Let $f(\v x)=\sum_{i=1}^Nf_i\v x^{\v a_i}$ be a Laurent polynomial
in $x_1,\ldots,x_n$ with
$f_i\in R$ for all $i$. Here we use the vector notation $\v x^{\v e}=
x_1^{e_1}x_2^{e_2}\cdots x_n^{e_n}$. Let $\Delta \subset \R^n$ be the Newton polytope of $f(\v x)$,
which is the convex hull of its support $\{ \v a_i : f_i \ne 0\}$.
Let $J$ be the set of interior lattice points in $\Delta$ and set $g=\# J$.
We assume that $g>0$.
For any integer $m\ge1$ we define the $g\times g$-matrix $\beta_m$ with entries
\[
(\beta_m)_{\v u,\v v \in J}:=\mbox{coefficient of $\v x^{m\v v-\v u}$ of }f(\v x)^{m-1}.
\]
When $m=1$ we take for $\beta_m$ the identity matrix. We call $\beta_p$ the 
\emph{Hasse--Witt matrix} of $f$. When $\beta_p$ is invertible modulo $p$ it turns
out that $\beta_{p^s}$ is invertible modulo $p$ for every $s\ge1$. 
Note that being invertible modulo $p$ implies being invertible modulo all powers of $p$.

In \cite{MV16} it is shown that if the Hasse--Witt matrix is invertible modulo $p$, then 
\[
\beta_{p^{s+1}}\sigma(\beta_{p^s})^{-1}\is \beta_{p^{s}}\sigma(\beta_{p^{s-1}})^{-1}
\quad \mod{p^s}
\]
for every $s\ge1$. One may observe that this congruence as similar to 
the last part of Theorem 6.2 in Katz's paper \cite{Ka85}.
We believe that the merit of \cite{MV16} is that the proof of the
congruence is completely elementary.

Let $\delta$ be a derivation on $R$. Again in an elementary way, it is
shown in \cite{MV16} that if $\beta_p$ is invertible in $R$, then
\[
\delta(\beta_{p^{s+1}})\beta_{p^{s+1}}^{-1}\is\delta(\beta_{p^s})\beta_{p^s}^{-1}
\quad \mod{p^s}
\]
for every $s\ge1$.

These congruences imply the existence, for each Frobenius lift $\sigma$ and each derivation $\delta$ on $R$, of $p$-adic limit matrices $\Lambda_\sigma$ and
$N_\delta$ such that
\[
\Lambda_\sigma=\lim_{s\to\infty}\beta_{p^s}\sigma(\beta_{p^{s-1}})^{-1}
\mbox{ and }N_\delta=\lim_{s\to\infty}\delta(\beta_{p^s})\beta_{p^s}^{-1}.
\]
It is the goal of the present paper to give an interpretation of these
matrices in terms of operations with regular rational functions on $\T^n\setminus Z_f$,
the complement of the set of zeroes $Z_f = \{\v x :  f(\v x) = 0\}$ in the $n$-dimensional torus $\T^n$.
At the same time we provide an alternative proof of the congruences.

To be slightly more precise, we consider the $R$-module $\Omega_f$ of rational 
functions generated over $R$ by 
\[
(u_0-1)!\frac{\v x^{\v u}}{f(\v x)^{u_0}},
\]
where $u_0$ is a positive integer and $\v u\in (u_0\Delta) \cap \Z^n$.
Any derivation $\delta$ on $R$ 
can be extended naturally to $\Omega_f$ by setting $\delta(x_i)=0$ for all $i$. 

In this paper we construct the $R$-linear Cartier operator
$\cartier:\widehat \Omega_f\to \widehat \Omega_{f^\sigma}$,
where $\hat \Omega_f = \underset{\leftarrow}\lim (\Omega_f/ p^s \Omega_f)$ 
is the $p$-adic completion of $\Omega_f$ and $f^\sigma(\v x)=\sum_{i=1}^N f_i^\sigma \, \v x^{\v a_i}$
is simply $f$ with $\sigma$ applied to its coefficients. The Cartier operator commutes 
with any derivation $\delta$ of $R$.

The main results of this paper are Theorems \ref{main0} and \ref{main1}. 
Applied to the open set $\mu=\Delta^\circ$ of interior points of $\Delta$, 
they describe a free rank $g$ subquotient $Q_f=Q_f(\Delta^\circ)$ of $\Omega_f$
to which the Cartier operator descends and $\Lambda_\sigma$ is the (transposed) matrix that
corresponds to the $R$-linear map $\cartier: Q_f \to Q_{f^\sigma}$.
As a bonus of our considerations we also recover a version of Katz's theorem 
\cite[Theorem 6.2]{Ka85} as Theorem \ref{main2}.

Finally in this introduction we point out the connection with the de Rham cohomology of
the complement of $Z_f$. 
Define the modules $\Omega_f^n=\Omega_f{dx_{1}\over x_{1}}\wedge\cdots\wedge{dx_{n}\over x_{n}}$ and 
$\Omega_f^{n-1} = \oplus_{i=1}^n 
\Omega_f{dx_{1}\over x_{1}}\wedge\cdots \check{{dx_{i}\over x_{i}}}\cdots \wedge{dx_{n}\over x_{n}}$
of differential $n$- and $n-1$-forms respectively.
The above mentioned $R$-module $Q_f$ is in fact a ($p$-adic) subquotient of 
\[
\dwork:=\Omega_f^n/d(\Omega_f^{n-1}).
\]
We call the latter the {\it Dwork module}. It is known due to the work of Griffiths
and Batyrev that, when $R$ is a field and $f$ satisfies certain regularity conditions
(so called {\it$\Delta$-regularity}), then $\dwork$ is isomorphic to 
the middle de Rham cohomology $H^n_{dR}(\T^n \setminus Z_f)$ 
(see Corollary~\ref{dwork-deRham} and~\cite[Theorem 7.13]{Ba93}). In particular, it is a vector space over $R$ of finite dimension. In this paper we will not assume regularity.
We also will not assume that the Newton polytope $\Delta \subset \R^n$ is of maximal dimension.
 
\section{Regular functions and formal expansion}\label{sec:diff-forms}

Let $R$ be a characteristic zero ringdomain, $f \in R[x_1^{\pm1},\ldots,x_n^{\pm 1}]$ be 
a Laurent polynomial and $\Delta \subset \R^n$ be its Newton polytope. By $C(\Delta)$ 
we denote the subset of $\R^{n+1}$ given by 
\be{delta-cone}
C(\Delta)=\{\lambda(1,u_1,\ldots,u_n)\,|\,(u_1,\ldots,u_n)\in \Delta, \lambda\ge0\},
\ee
the positive cone spanned by the Newton polytope $\Delta$ placed in $\R^{n+1}$ in the
hyperplane $u_0=1$.

The set of integral points $C(\Delta)\cap \Z^{n+1}$ is denoted by $C(\Delta)_\Z$.
Let $C(\Delta)^+_\Z = C(\Delta)_\Z \setminus \{ \v 0 \} $ be the set of non-zero
integral points in the cone. For any $(u_0,u_1,\ldots,u_n)=\v u\in C(\Delta)_\Z$ we denote 
$\v x^{\v u}=x_1^{u_1}\cdots x_n^{u_n}$ (we simply drop the component $u_0$ here,
as there is no respective variable $x_0$). 
Consider the $R$-module $\Omega_f$ of regular rational functions generated over $R$ by
\[
\omega_{\v u}:= (u_0-1)!\frac{\v x^{\v u}}{f(\v x)^{u_0}}
\]
for all $\v u\in C(\Delta)_\Z^+$. Note that $1$ is an $R$-linear combination
of $\omega_{\v u}$ with $u_0=1$, so the constant functions are also in $\Omega_f$.

We define the module $d\Omega_f$ as the $R$-span of all derivatives $x_i\frac{\partial}
{\partial x_i} \omega$ with $\omega \in \Omega_f$ and $1 \le i \le n$.
Note that $d\Omega_f\subset \Omega_f$. The quotient $R$-module 
\[
\dwork:=\Omega_f/d\Omega_f,
\]
will be called the {\it Dwork module}. 

\begin{remark}\label{whyfactorials} 
Having the extra factor $(u_0-1)!$ in the definition of $\omega_{\v u}$ appears to 
be essential in many ways when working over rings $R$ (rather than fields). 
At the end of the introduction we mentioned that the Dwork module can be 
also written in terms of differential forms as $\dwork = \Omega^n_f / d(\Omega^{n-1}_f)$.
Factors $(u_0-1)!$ in $\omega_{\v u}$ allow the so-called Griffiths--Dwork reduction when we work in $\dwork$.
This is the procedure to reduce the pole order of a form by shifting it by exact forms.
More concretely, suppose we have a form
of the shape $k!\frac{g(\v x)}{f^{k+1}}\frac{d\v x}{\v x}$ and there exist
Laurent polynomials $g_0(\v x),g_1(\v x),\ldots,g_n(\v x)$ with support in $k\Delta$ such that
$g=g_0f+\sum_{i=1}^ng_ix_i\frac{\partial f}{\partial x_i}$. Then 
\begin{eqnarray*}
k!\frac{g(\v x)}{f^{k+1}}\frac{d\v x}{\v x}&=&
k!\frac{g_0}{f^k}\frac{d\v x}{\v x}+\sum_{i=1}^nk!\frac{g_i}{f^{k+1}}
x_i\frac{\partial f}{\partial x_i}\frac{d\v x}{\v x}\\
&=&k!\frac{g_0}{f^k}\frac{d\v x}{\v x}
+\sum_{i=1}^n(k-1)!\frac{x_i}{f^k}\frac{\partial g_i}{\partial x_i}\\
&&+\sum_{i=1}^n(-1)^id\left((k-1)!\frac{g_i}{f^k}
{dx_{1}\over x_{1}}\wedge\cdots {\overset{\vee}{dx_{i}}\over x_{i}} \cdots \wedge
{dx_{n}\over x_{n}}\right)\\
&\is&k!\frac{g_0}{f^k}\frac{d\v x}{\v x}
+\sum_{i=1}^n(k-1)!\frac{x_i}{f^k}\frac{\partial g_i}{\partial x_i} \quad 
\mod{d(\Omega_f^{n-1})}.
\end{eqnarray*}
The final form is again in $\Omega_f^n$. 
Note that factorials appear in the Laplace transform in~\cite[\S 7]{Ba93}.
\end{remark}

Rational functions can be expanded as formal Laurent series. To that end we
fix a vertex $\v b$ of $\Delta$ and 
{\it assume that the coefficient of $f$ at $\v x^\v b$ is a unit in $R$}.
Denote this coefficient by $f_{\v b}$ and expand rational functions as
\[
\frac{g(\v x)}{f(\v x)^m} \= 
\frac{ g(\v x) \v x^{-m \v b} }{ f_{\v b}^{m}(1 + \ldots)^m } \=
  \frac{ g(\v x) \v x^{-m \v b} }{ f_{\v b}^{m}}\sum_{k \ge 0} h_k(\v x) ,
\]
where $h_k(\v x)$ are Laurent polynomials supported in $k(\Delta-\v b)$ for every $k$.
There are only finitely many summands contributing to each monomial in the cone
$C(\Delta-\v b) \subseteq \R^n$.
Observe that when $g(\v x)$ is supported in $m \Delta$ the formal series in the right-hand
side is itself supported in $C(\Delta-\v b)$.
(Here we need a word of caution regarding our notation. In~\eqref{delta-cone}
the polytope $\Delta$ was placed in the hyperplane $u_0=1$ in $\R^{n+1}$,
which will be our usual convention throughout the paper. Note that, with this convention,
the difference $\Delta-\v b$ is a polytope in the hyperplane $u_0=0$ and one can view
the respective cone $C(\Delta - \v b)$ as a subset of this hyperplane $ \{ u_0 = 0\} \cong \R^n$.)
Denote the ring of formal Laurent series with support in $C(\Delta-\v b)$ and coefficients
in $R$ by 
\[
\Omega_{\rm formal} \= \{ \sum_{\v k\in C(\Delta-\v b)}a_{\v k}\v x^{\v k} \;|\; a_{\v k} \in R \}.
\]
It is indeed a ring because the cone has $\v 0$ as a vertex. The above explained procedure of formal expansion defines an embedding of $\Omega_f$ into
$\Omega_{\rm formal}$ as an $R$-submodule.  Note that we do not include the choice of $\v b$
in the notation $\Omega_{\rm formal}$.

Similarly to $d \Omega_f$, the $R$-module of \emph{formal derivatives} $d\Omega_{\rm formal}$ is defined as the $R$-span of derivatives $x_i\frac{\partial}{\partial x_i}\omega$ with $1\le i\le n$ and $\omega\in\Omega_{\rm formal}$.

\begin{lemma}\label{exactcondition}
A series $\sum_{\v k\in C(\Delta-\v b)}a_{\v k}\v x^{\v k}$ is a formal derivative if and 
only if 
\[
a_{\v k}\is0\mod{\gcd(k_1,\ldots,k_n)}\quad\mbox{for all $\v k$}.
\] 
\end{lemma}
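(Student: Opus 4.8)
The plan is to analyze the condition coefficient-by-coefficient, since both the notion of formal derivative and the divisibility condition decouple across monomials. A Laurent monomial $\v x^{\v k}$ with $\v k = (k_1,\ldots,k_n) \in C(\Delta - \v b)$ arises from applying $x_i\frac{\partial}{\partial x_i}$ to $\v x^{\v k}$ itself: we have $x_i\frac{\partial}{\partial x_i}\v x^{\v k} = k_i \v x^{\v k}$. Hence for any choice of scalars $c_1,\ldots,c_n \in R$, the element $\sum_{i=1}^n c_i\, x_i\frac{\partial}{\partial x_i}\v x^{\v k} = (\sum_i c_i k_i)\v x^{\v k}$ is a formal derivative, and conversely every formal derivative, being an $R$-span of such, has its $\v x^{\v k}$-coefficient lying in the ideal generated by $k_1,\ldots,k_n$. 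Since $R$ need not be a principal ideal domain, I should be careful here: the relevant ideal is $(k_1,\ldots,k_n)R$, but because $k_1,\ldots,k_n$ are ordinary integers, the ideal they generate in $R$ equals $\gcd(k_1,\ldots,k_n)\cdot R$ — indeed $\gcd(k_1,\ldots,k_n)$ is a $\Z$-linear combination of the $k_i$, and each $k_i$ is a multiple of the gcd. So the condition $a_{\v k} \in (k_1,\ldots,k_n)R$ is exactly $a_{\v k} \equiv 0 \pmod{\gcd(k_1,\ldots,k_n)}$.

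First I would prove the ``only if'' direction. Take $\omega = \sum_{\v l \in C(\Delta - \v b)} b_{\v l}\v x^{\v l} \in \Omega_{\rm formal}$ and compute $x_i\frac{\partial}{\partial x_i}\omega = \sum_{\v l} l_i b_{\v l}\v x^{\v l}$; the $\v x^{\v k}$-coefficient is $k_i b_{\v k}$, which is divisible by $k_i$, hence by $\gcd(k_1,\ldots,k_n)$. An arbitrary formal derivative is a finite $R$-linear combination $\sum_j r_j\, x_{i_j}\frac{\partial}{\partial x_{i_j}}\omega_j$, and so its $\v x^{\v k}$-coefficient is a sum of terms each divisible by $\gcd(k_1,\ldots,k_n)$; therefore the whole coefficient is divisible by $\gcd(k_1,\ldots,k_n)$. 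Note that the cone $C(\Delta - \v b)$ is closed under taking the monomials appearing, so staying inside $\Omega_{\rm formal}$ is automatic — there is no support issue to worry about here.

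For the ``if'' direction, suppose $a_{\v k} \equiv 0 \pmod{\gcd(k_1,\ldots,k_n)}$ for all $\v k \in C(\Delta - \v b)$. For each fixed $\v k \ne \v 0$ write $d_{\v k} = \gcd(k_1,\ldots,k_n)$ and pick integers $m_1,\ldots,m_n \in \Z$ with $\sum_i m_i k_i = d_{\v k}$; then $a_{\v k} = d_{\v k} c_{\v k}$ for some $c_{\v k} \in R$, and
\[
\sum_{i=1}^n (m_i c_{\v k})\, x_i\frac{\partial}{\partial x_i} \v x^{\v k} \= \Bigl(\sum_{i=1}^n m_i k_i\Bigr) c_{\v k}\, \v x^{\v k} \= d_{\v k} c_{\v k}\, \v x^{\v k} \= a_{\v k}\,\v x^{\v k}.
\]
The only delicate point is that the series $\sum_{\v k} a_{\v k}\v x^{\v k}$ may have infinitely many terms, so I cannot literally sum infinitely many ``$x_i\partial_{x_i}$ of a monomial'' operations as written — I need to assemble them into finitely many applications of $x_i\frac{\partial}{\partial x_i}$ to genuine elements of $\Omega_{\rm formal}$. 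The fix: define $\eta_i = \sum_{\v k \ne \v 0} (m_i^{(\v k)} c_{\v k})\, \v x^{\v k} \in \Omega_{\rm formal}$, where $m_i^{(\v k)}$ denotes the chosen coefficient for the index $\v k$; this is a well-defined element of $\Omega_{\rm formal}$ since its support lies in $C(\Delta - \v b)$. Then $x_i\frac{\partial}{\partial x_i}\eta_i = \sum_{\v k} (m_i^{(\v k)} c_{\v k}) k_i\, \v x^{\v k}$, and summing over $i$ gives $\sum_{i=1}^n x_i\frac{\partial}{\partial x_i}\eta_i = \sum_{\v k}(\sum_i m_i^{(\v k)} k_i) c_{\v k}\,\v x^{\v k} = \sum_{\v k} a_{\v k}\v x^{\v k}$, exhibiting the series as a formal derivative. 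The main obstacle is precisely this bookkeeping — ensuring the infinite reorganization of coefficients stays inside $\Omega_{\rm formal}$ and that each partial sum is a genuine formal derivative — but it is routine once the finitely-many-$\eta_i$ trick is in place; the $\v k = \v 0$ term need not be treated since $\gcd() = 0$ there, i.e. the constant coefficient must vanish for a derivative and automatically does under our hypothesis (interpreting $\gcd$ of all-zero as $0$, so $a_{\v 0} \equiv 0 \pmod 0$ forces $a_{\v 0} = 0$).
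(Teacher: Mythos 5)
Your proof is correct and takes essentially the same route as the paper: the forward direction by observing each $x_i\partial_{x_i}$ multiplies the $\v x^{\v k}$-coefficient by $k_i$, and the reverse direction by B\'ezout, writing $\gcd(k_1,\ldots,k_n)=\sum_i m_i k_i$. Your extra step of assembling the monomial-by-monomial B\'ezout coefficients into $n$ genuine series $\eta_1,\ldots,\eta_n$ before differentiating is a correct and welcome clarification that the paper's terser proof leaves implicit (since $d\Omega_{\rm formal}$ is defined as a finite $R$-span of derivatives of series, not an infinite sum of derivatives of monomials), but it does not change the underlying argument.
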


\begin{proof}
Notice that for any monomial $\v x^{\v k}$, any $i$ and any $a\in R$ we have
\[
x_i\frac{\partial}{\partial x_i}\left(a \v x^{\v k}\right)
\= ak_i\v x^{\v k}\is0\mod{k_i}.
\]
This shows the $\Rightarrow$ part. To see the reverse implication, 
write $\gcd(k_1,\ldots,k_n) = \sum_i m_i k_i$ for some $m_i \in \Z$ and note that 
\[
\sum_i m_i x_i\frac{\partial \v x^{\v k}}{\partial x_i} = 
\gcd(k_1,\ldots,k_n) \v x^{\v k}.
\]
\end{proof}

\section{Cartier operator}\label{sec:cartier}

Let us fix a prime number $p$. We define the Cartier operator $\cartier$ on $\Omega_{\rm formal}$ by
\be{cartier-on-series-def}
\cartier\left(\sum_{\v k}a_{\v k}\v x^{\v k}\right):
=\sum_{\v k}a_{p\v k}\v x^{\v k}.
\ee
Though acting on different spaces, this operation was already used in early papers of Dwork (see~$\psi$ in~\cite[\S 2]{Dwork60}) and Reich (see $\Psi$ in~\cite[\S(b)]{Reich69}).

From now on we {\it assume that $\cap_s p^s R = \{ 0 \}$},
in which case we have a well defined $p$-adic valuation 
\[
\ord_p(r) \= \sup\{ s \in \Z_{\ge 0} : r \in p^s R\} 
\]
on $R$ which extends the usual $p$-adic valuation on $\Z \subset R$. This valuation takes finite values on all non-zero elements of $R$ and satisfies the inequalities $\ord_p(r_1 r_2) \ge \ord_p(r_1 )+ \ord_p(r_2)$ and $\ord_p(r_1+r_2) \ge \min(\ord_p(r_1),\ord_p(r_2))$. We also {\it assume that $R$ is $p$-adically complete}. In particular, $R$ is a $\Z_p$-algebra and Lemma~\ref{exactcondition} can be
reformulated as

\begin{lemma}\label{exactcondition2}
A series $h\in\Omega_{\rm formal}$ is a formal derivative if and only if
$\cartier^s(h)\is0\mod{p^s}$ for all integers $s\ge1$.
\end{lemma}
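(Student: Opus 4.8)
The plan is to translate the condition ``$h$ is a formal derivative'' into an arithmetic condition on the Laurent coefficients using Lemma~\ref{exactcondition}, and then match it against the condition ``$\cartier^s(h)\is 0\mod{p^s}$ for all $s$''. Write $h=\sum_{\v k\in C(\Delta-\v b)}a_{\v k}\v x^{\v k}$. By Lemma~\ref{exactcondition}, being a formal derivative is equivalent to $a_{\v k}\is 0\mod{\gcd(k_1,\ldots,k_n)}$ for every $\v k$, where the congruence is in the $p$-adically complete ringdomain $R$ equipped with the valuation $\ord_p$. Since $R$ is a $\Z_p$-algebra in which $\cap_s p^sR=\{0\}$, divisibility of $a_{\v k}$ by an integer $m$ depends only on the $p$-part of $m$: writing $m=p^t m'$ with $p\nmid m'$, we have $m\mid a_{\v k}$ in $R$ if and only if $p^t\mid a_{\v k}$ in $R$, because $m'$ is a unit in $\Z_p\subseteq R$. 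Hence the formal-derivative condition is equivalent to
\[
\ord_p(a_{\v k})\;\ge\;\ord_p\bigl(\gcd(k_1,\ldots,k_n)\bigr)\;=\;\min_i\ord_p(k_i)\qquad\text{for all }\v k.
\]

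Next I unwind the Cartier side. Iterating the definition \eqref{cartier-on-series-def}, $\cartier^s(h)=\sum_{\v k}a_{p^s\v k}\,\v x^{\v k}$, so $\cartier^s(h)\is 0\mod{p^s}$ for all $s\ge 1$ is the statement that $\ord_p(a_{p^s\v k})\ge s$ for all $s\ge 1$ and all $\v k\in C(\Delta-\v b)$ with $p^s\v k$ again in the cone. Because $C(\Delta-\v b)$ is a cone with apex $\v 0$, the vector $p^s\v k$ lies in the cone exactly when $\v k$ does, so there is no loss: the condition is precisely $\ord_p(a_{\v m})\ge s$ whenever $\v m\in C(\Delta-\v b)$ is divisible (as an integer vector) by $p^s$. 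Equivalently, for every $\v m$ in the cone, $\ord_p(a_{\v m})\ge \max\{\,s : p^s\mid \v m\,\}=\min_i\ord_p(m_i)$, which is exactly the displayed condition above. This establishes the equivalence.

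The one point that needs a little care — and which I expect to be the main (minor) obstacle — is the reduction ``$m\mid a$ in $R$ $\iff$ $p^{\ord_p(m)}\mid a$ in $R$'' for a nonzero integer $m$, i.e.\ that the prime-to-$p$ part of $m$ can be discarded. This uses that $\Z_p\subseteq R$ (guaranteed by $p$-adic completeness plus $\cap_s p^sR=\{0\}$, as noted in the text just before the statement) so that units of $\Z_p$ are units of $R$, together with the fact that $\ord_p$ is multiplicative on the image of $\Z$ and well-defined on all of $R$. One should also note the degenerate case $\v k=\v 0$ (the constant term), where $\gcd$ is conventionally $0$ and the condition $a_{\v 0}\is 0\mod 0$ forces $a_{\v 0}=0$; on the Cartier side, $p^s\mid \v 0$ for all $s$, so $\ord_p(a_{\v 0})\ge s$ for all $s$, which by $\cap_s p^sR=\{0\}$ again forces $a_{\v 0}=0$. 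Thus the two conditions agree on the constant term as well, and the lemma follows.
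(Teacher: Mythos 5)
Your proof is correct and is essentially the paper's argument made explicit: the paper states Lemma~\ref{exactcondition2} as a reformulation of Lemma~\ref{exactcondition}, using precisely the fact that $R$ is a $\Z_p$-algebra so that divisibility by $\gcd(k_1,\ldots,k_n)$ reduces to divisibility by its $p$-part, and your coefficient-matching handles both implications cleanly. (The paper also notes, as a shortcut for the ``only if'' direction, that $\cartier^s\circ\theta_i = p^s\,\theta_i\circ\cartier^s$ gives it immediately; you do not need this, since your chain of equivalences already covers both directions.)
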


One easily shows that $\cartier\circ \theta_i=p \, \theta_i\circ\cartier$ for any $\theta_i=x_i\frac{\partial}
{\partial x_i}$. 
We thus observe that the Cartier operator preserves the submodule of formal derivatives and is
divisible by $p$ on it, i.e.
\be{cartier-on-derivatives}
\cartier: d \Omega_{\rm formal} \to p \, d \Omega_{\rm formal}.
\ee
Applying this commutation identity $s$ times
yields $\cartier^s\circ \theta_i=p^s \theta_i\circ\cartier^s$, which immediately gives one
of the implications in the last lemma.  Here is another straightforward property 
of $\cartier$:

\begin{lemma}\label{cartierhandy}
Let $g,h \in \Omega_{\rm formal}$. Then $\cartier(g(\v x^p)h)=g(\v x)\cartier(h)$.
\end{lemma}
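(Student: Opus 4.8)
The plan is to check the identity monomial by monomial, the only subtlety being that all the series manipulations take place inside $\Omega_{\rm formal}$, where each individual monomial receives contributions from only finitely many terms because the cone $C(\Delta-\v b)$ has $\v 0$ as a vertex.

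First I would write $g=\sum_{\v l}b_{\v l}\v x^{\v l}$ and $h=\sum_{\v m}c_{\v m}\v x^{\v m}$ with supports in $C(\Delta-\v b)$. Since this cone is closed under addition and under scaling by positive reals, the series $g(\v x^p)=\sum_{\v l}b_{\v l}\v x^{p\v l}$ lies in $\Omega_{\rm formal}$, and so does the product $g(\v x^p)h$; moreover, for each exponent $\v k$ there are only finitely many pairs $(\v l,\v m)$ with $p\v l+\v m=\v k$, so the coefficient of $\v x^{\v k}$ in $g(\v x^p)h$ is the finite sum $\sum_{\v l}b_{\v l}c_{\v k-p\v l}$. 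Applying $\cartier$ amounts to replacing $\v k$ by $p\v k$, so the coefficient of $\v x^{\v k}$ in $\cartier(g(\v x^p)h)$ equals $\sum_{\v l}b_{\v l}c_{p\v k-p\v l}=\sum_{\v l}b_{\v l}c_{p(\v k-\v l)}$.

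On the other side, the coefficient of $\v x^{\v j}$ in $\cartier(h)$ is $c_{p\v j}$, hence the coefficient of $\v x^{\v k}$ in $g(\v x)\cartier(h)$ is $\sum_{\v l}b_{\v l}c_{p(\v k-\v l)}$, which agrees with the expression just obtained. (Equivalently, by $R$-bilinearity together with the finiteness just noted, it suffices to treat $g=\v x^{\v l}$ and $h=\v x^{\v m}$, in which case both sides equal $\v x^{\v l+\v m/p}$ if $p\mid\v m$ componentwise and $0$ otherwise, directly from the definition of $\cartier$.) I do not expect any genuine obstacle here; the one thing to state carefully is that the vertex property of $C(\Delta-\v b)$ is what makes all the rearrangements of sums legitimate and keeps the output in $\Omega_{\rm formal}$.
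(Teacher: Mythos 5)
Your coefficient-by-coefficient verification is correct, and the paper actually states Lemma~\ref{cartierhandy} without proof, calling it ``another straightforward property of $\cartier$''; so there is no printed argument to compare against, and your computation is precisely the one being taken for granted. You are also right to flag that the strong convexity of $C(\Delta-\v b)$ (having $\v 0$ as a vertex) is what makes the products and rearrangements well defined in $\Omega_{\rm formal}$, and the closure of the cone under positive scaling is what makes $\cartier$ land back in $\Omega_{\rm formal}$.
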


Since Cartier operators are usually defined mod $p$ in the literature,
naming $\cartier$ a Cartier lift might be more appropriate. Nevertheless we prefer to call
it the Cartier operator.
\bigskip

We now like to restrict the Cartier operator to $\Omega_f$. We will need the $p$-adic
completion $\hat \Omega_f := \underset{\leftarrow}\lim  \Omega_f/ p^s \Omega_f$.
Fix a Frobenius lift $\sigma$ on $R$: this is a ring  endomophism $\sigma: R \to R$
such that $\sigma(r) \is r^p \mod {p}$ for every $r \in R$. Our main observation is that
\begin{proposition}\label{cartier-def} 
If $p>2$ then $\cartier(\Omega_f) \subset \hat \Omega_{f^{\sigma}}$.
\end{proposition}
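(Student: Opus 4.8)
The plan is to reduce the claim to an explicit computation on the generators $\omega_{\v u}$ of $\Omega_f$ and to show that, after applying $\cartier$, each image lies in $\hat\Omega_{f^\sigma}$. First I would recall that by definition $\omega_{\v u} = (u_0-1)!\,\v x^{\v u}/f(\v x)^{u_0}$, and that its formal expansion (with respect to the fixed vertex $\v b$) is a Laurent series supported in $C(\Delta-\v b)$. The key structural fact is Lemma~\ref{cartierhandy}: $\cartier(g(\v x^p)h) = g(\v x)\cartier(h)$. So the natural strategy is to compare $f(\v x)^k$ with $f^\sigma(\v x^p)^k$ modulo $p$-power denominators. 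Write $f(\v x)^p = f^\sigma(\v x^p) + p\,E(\v x)$ for some Laurent polynomial $E$ supported in $p\Delta$; this is where the Frobenius condition $\sigma(r)\is r^p\mod p$ enters, together with the multinomial expansion of $(\sum f_i\v x^{\v a_i})^p$, whose non-purely-$p$-th-power terms are all divisible by $p$.

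Next I would expand $1/f(\v x)^{u_0}$ in terms of $1/f^\sigma(\v x^p)$. Roughly, we want to write
\[
\frac{1}{f(\v x)^{u_0}} \= \frac{f(\v x)^{p u_0 - u_0}}{f(\v x)^{p u_0}} \= \frac{f(\v x)^{(p-1)u_0}}{\bigl(f^\sigma(\v x^p) + pE(\v x)\bigr)^{u_0}},
\]
and then expand the denominator as a geometric-type series in $pE/f^\sigma(\v x^p)$. Collecting terms, $\cartier(\omega_{\v u})$ becomes an (infinite, but $p$-adically convergent) $R$-linear combination of expressions $\cartier\bigl(A(\v x)/f^\sigma(\v x^p)^{m}\bigr)$ where $A$ is a Laurent polynomial; using Lemma~\ref{cartierhandy} this equals $\cartier(A(\v x))/f^\sigma(\v x)^m$ times appropriate factors, and one checks the monomial supports land in the right cone so that each term is, up to the correct factorial normalization, an $R$-multiple of some $\omega^\sigma_{\v w}$ in $\Omega_{f^\sigma}$. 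Convergence in the $p$-adic topology is guaranteed because each successive term in the expansion of $(1+pE/f^\sigma)^{-u_0}$ gains a factor of $p$, so the partial sums form a Cauchy sequence in $\hat\Omega_{f^\sigma}$.

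The main obstacle — and the reason for the hypothesis $p>2$ — is the bookkeeping of the factorials $(u_0-1)!$ versus the $p$-adic valuation of the binomial-type coefficients $\binom{-u_0}{j}p^j E^j$ that appear, together with controlling which lattice points can actually occur after dividing exponent vectors by $p$. One must ensure that $\cartier\bigl(\v x^{\v u} f(\v x)^{(p-1)u_0} E(\v x)^j \cdot(\text{stuff})\bigr)$, once divided by $f^\sigma(\v x^p)^{u_0+j}$, produces numerator exponents that are genuine lattice multiples allowing the $\cartier$ to act, and that the resulting pole order $u_0+j$ comes with enough factorial divisibility to be absorbed into the definition of $\omega^\sigma$. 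The prime $2$ fails here because $\ord_2((u_0-1)!)$ can outpace the $2$-adic gain from the expansion in a borderline case. I would handle this by tracking, term by term, the inequality $\ord_p\bigl((\text{coefficient})\bigr) + j \ge \ord_p\bigl((u_0+j-1)!\bigr) - \ord_p\bigl((u_0-1)!\bigr)$, which holds for $p>2$ since then $\ord_p(m!) \le (m-1)/(p-1) \le m-1$, but only barely for $p=2$. Everything else — the multinomial estimate for $f^p - f^\sigma(\v x^p)$, the geometric series manipulation, and the support/cone verification — is routine once this valuation inequality is in place.
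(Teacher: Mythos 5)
Your overall strategy is the same as the paper's: rewrite $1/f(\v x)^{u_0}$ with $f^\sigma(\v x^p)$ appearing in the denominator via $f(\v x)^p = f^\sigma(\v x^p) + pE(\v x)$, expand geometrically, push $\cartier$ through Lemma~\ref{cartierhandy}, and show the resulting coefficients have $p$-adic valuations tending to $\infty$. The one structural deviation is your choice of exponent. You raise $f^p$ to the power $u_0$, so the denominator is $(f^\sigma(\v x^p)+pE)^{u_0}$ and the resulting pole orders after $\cartier$ start at $u_0$. The paper instead raises to the power $\lceil u_0/p\rceil$, writing $1/f^{u_0} = f^{p\lceil u_0/p\rceil-u_0}/(f^\sigma(\v x^p)-pG)^{\lceil u_0/p\rceil}$, so pole orders start at $\lceil u_0/p\rceil$. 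Both variants prove this proposition, but the paper's choice yields the estimate $\ord_p(F_{\v u,\v v})\ge\frac{p-2}{p-1}(v_0-1)$, which is \emph{uniform in $u_0$}; this uniformity is what powers the finer statement that $F_{\v u,\v v}\is0\pmod p$ whenever $v_0>1$, used in Proposition~\ref{cartiermodp}. Your version only gives a bound in terms of $v_0-u_0$, which is weaker; it still suffices here because one works with each generator $\omega_{\v u}$ (hence fixed $u_0$) separately.

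The valuation bookkeeping in your last paragraph is where you go astray. After normalizing the $j$-th term against $\omega^\sigma_{\v w}$ with $w_0 = u_0+j$, the coefficient one must control is $(u_0-1)!\binom{-u_0}{j}p^j/(u_0+j-1)!$, which simplifies to $(-1)^j p^j/j!$ times a polynomial coefficient in $R$. The inequality you wrote, $\ord_p(\text{coeff})+j\ge\ord_p((u_0+j-1)!)-\ord_p((u_0-1)!)$, reduces (taking the coefficient to be the binomial) to $j\ge\ord_p(j!)$, which holds for \emph{all} primes including $p=2$; so it cannot be the place where $p>2$ is used. What one actually needs is $\ord_p(p^j/j!) = j-\ord_p(j!)\to\infty$ as $j\to\infty$. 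For $p>2$ this follows from $\ord_p(j!)<j/(p-1)$, giving growth $\ge j(p-2)/(p-1)$. For $p=2$, $j-\ord_2(j!)$ equals the binary digit sum $s_2(j)$, which is bounded along $j=2^k$; the series then fails to converge in $\hat\Omega_{f^\sigma}$. Your earlier remark that ``each successive term gains a factor of $p$, so the partial sums form a Cauchy sequence'' is, taken on its own, too quick: the gain $p^j$ is eaten into by the $1/j!$ coming from factoring out $(u_0+j-1)!$, and only the net quantity $p^j/j!$ controls convergence — this is precisely where the hypothesis $p>2$ enters, and it should be stated as the decisive estimate rather than the inequality you wrote.
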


\begin{proof} To see this, rewrite $1/f(\v x)^{u_0}$ as 
$f(\v x)^{p\lceil u_0/p\rceil-u_0}/f(\v x)^{p\lceil u_0/p\rceil}$.
Then note that
$f(\v x)^p=f^\sigma(\v x^p)-pG(\v x)$ for some Laurent polynomial $G$
with coefficients in $R$ and support in $p\Delta$. Then we use the $p$-adic expansion
\[
\frac{\v x^{\v u}}{f(\v x)^{u_0}}=
\frac{\v x^{\v u}f(\v x)^{p\lceil u_0/p\rceil-u_0}}
{(f^\sigma(\v x^p)-pG(\v x))^{\lceil u_0/p\rceil}}
=\sum_{r\ge0}p^r{\lceil u_0/p\rceil+r-1\choose r}
\frac{G(\v x)^r}{f^\sigma(\v x^p)^{r+\lceil u_0/p\rceil}}
\v x^{\v u}f(\v x)^{p\lceil u_0/p\rceil-u_0}.
\]
Multiply this with $(u_0-1)!$ and apply $\cartier$. Using Lemma~\ref{cartierhandy} we find that
\[
\cartier(\omega_{\v u})=\sum_{r\ge0}\frac{p^r}{r!}
\frac{(u_0-1)!}{(\lceil u_0/p\rceil-1)!}
(\lceil u_0/p\rceil+r-1)!\frac{Q_r(\v x)}{f^\sigma(\v x)^{r+\lceil u_0/p\rceil}},
\]
where the $Q_r(\v x)=\cartier(G(\v x)^r\v x^{\v u}f(\v x)^{p\lceil u_0/p\rceil-u_0})$ are
Laurent polynomials in $x_1,\ldots,x_n$ with support in
$(\lceil u_0/p\rceil+r)\Delta$ and coefficients in $R$. The last formula can be rewritten as
\begin{equation}\label{cartiermatrix}
\cartier(\omega_{\v u})=\sum_{\v v\in C(\Delta)_\Z^+}F_{\v u,\v v}
\omega^\sigma_{\v v},
\end{equation}
where 
\be{cartiermatrix-entries}
F_{\v u,\v v} = \bcs \frac{p^r}{r!} \frac{(u_0-1)!}{(\lceil u_0/p\rceil-1)!} \times \text{ coefficient of } \v x^{\v v} \text{ in } Q_r(\v x), & r:=v_0-\lceil u_0/p\rceil\ge0, \\
0 , & v_0<\lceil u_0/p\rceil. \ecs
\ee
To show that $\cartier(\omega_{\v u})\in\hat\Omega_{f^\sigma}$
it suffices to show that
$\ord_p(F_{\v u,\v v})\to\infty$ as $v_0\to\infty$. To that end we observe that
\[
\ord_p(F_{\v u,\v v})\ge r-\ord_p(r!)+\ord_p\left(\frac{(u_0-1)!}
{(\lceil u_0/p\rceil-1)!}\right).
\]
It is straightforward to see that $\frac{(u_0-1)!}{(\lceil u_0/p\rceil-1)!}$ has
order $\ge \lceil u_0/p\rceil-1$ and that $\ord_p(r!)<\frac{r}{p-1}$. This gives us
\be{cartier-entries-estimate}
\ord_p(F_{\v u,\v v})\ge r+\lceil u_0/p\rceil-1-\frac{r}{p-1}
=v_0-1-\frac{r}{p-1}\ge\frac{p-2}{p-1}(v_0-1).
\ee
The latter goes to $\infty$ with $v_0$ when $p>2$.
\end{proof} 

From now on we assume that $p>2$. By Proposition~\ref{cartier-def} we have a well-defined
$R$-linear map 
\be{cartier-f}
\cartier: \hat\Omega_f \to \hat{\Omega}_{f^\sigma}.
\ee
It is in fact given by the explicit formulas~\eqref{cartiermatrix} and~\eqref{cartiermatrix-entries},
which also show that the map~\eqref{cartier-f} is independent of the choice of vertex $\v b$ of $\Delta$
at which we are doing formal expansions.

\bigskip
We shall also be interested in regular functions supported in subsets of the cone $C(\Delta)$.
For a subset $\mu \subseteq \Delta$ let us denote by 
\[
\Omega_f(\mu) \subseteq \Omega_f
\]
the $R$-module generated by functions $\omega_{\v u}$ with $\v u\in C(\mu)_\Z^+$.
Here $C(\mu) \subseteq C(\Delta)$ is the positive cone spanned by $\mu$ placed in
$\R^{n+1}$ in the hyperplane $u_0=1$, and $C(\mu)_\Z = C(\mu) \cap \Z^{n+1}$ is the
set of integral points in this cone and $C(\mu)_\Z^+=C(\mu)_\Z \setminus \{\v 0\}$
is the set of non-zero integral points. Note that $\Omega_f(\Delta)=\Omega_f$.
The respective $p$-adic completion is denoted 
$\hat\Omega_f(\mu) = \underset{\leftarrow}\lim \; \Omega_f(\mu) / p^s \Omega_f(\mu)$.

\begin{proposition}\label{topology} Define a finite topology on $\Delta$, where the
closed sets are unions of faces of any dimension. 
Let $\mu \subset \Delta$ be an open set. Then the Cartier operator $\cartier$ maps $\hat\Omega_f(\mu)$
into $\hat\Omega_{f^\sigma}(\mu)$ and derivations of $R$ map $\Omega_f(\mu)$ to itself.
\end{proposition}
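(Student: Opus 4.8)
The plan is to prove both assertions --- stability of $\hat\Omega_f(\mu)$ under $\cartier$ and of $\Omega_f(\mu)$ under derivations --- by reducing each to one combinatorial fact about the Newton polytope, the \emph{extremality of faces}: if $\tau$ is a face of $\Delta$, if $\v p_1,\dots,\v p_m\in\Delta$, and if $\lambda_1,\dots,\lambda_m\ge 0$ with $\sum_i\lambda_i>0$ and $\big(\sum_i\lambda_i\v p_i\big)\big/\big(\sum_i\lambda_i\big)\in\tau$, then $\v p_i\in\tau$ for every $i$ with $\lambda_i>0$. This will be immediate from writing $\tau=\{\v p\in\Delta:\ell(\v p)=c\}$ for a linear form $\ell$ with $\ell\le c$ on $\Delta$ and noting that $\sum_i\lambda_i\,(c-\ell(\v p_i))=0$ is a sum of non-negative reals. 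Since $\mu$ is open, $\Delta\setminus\mu$ is a union of faces $\bigcup_j\tau_j$, so extremality gives the working principle I will use for both halves: \emph{if $\v p\in\mu$, $\v q_1,\dots,\v q_m\in\Delta$, $\lambda>0$ and $\kappa_1,\dots,\kappa_m\ge 0$, then $\big(\lambda\v p+\sum_j\kappa_j\v q_j\big)\big/\big(\lambda+\sum_j\kappa_j\big)$ again lies in $\mu$}, for otherwise it lies in some $\tau_j$ and extremality forces $\v p\in\tau_j\subseteq\Delta\setminus\mu$.

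\textbf{The Cartier operator.} I would argue directly from the explicit description~\eqref{cartiermatrix}--\eqref{cartiermatrix-entries}. Fix $\v u\in C(\mu)_\Z^+$, so the spatial part $\v u'=(u_1,\dots,u_n)$ equals $u_0\,\v p_u$ with $\v p_u=\v u'/u_0\in\mu$. Suppose $F_{\v u,\v v}\ne 0$; then $v_0\ge q:=\ceil{u_0/p}$ and, setting $r:=v_0-q$, the coefficient of $\v x^{\v v}$ in $Q_r=\cartier\big(G^r\,\v x^{\v u}\,f^{\,pq-u_0}\big)$ is non-zero, hence the coefficient of $\v x^{p\v v}$ in $G^r\,\v x^{\v u}\,f^{\,pq-u_0}$ is non-zero. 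Since $\supp(G)\subseteq p\Delta$ and $\supp(f)\subseteq\Delta$, and since the support of a product is contained in the Minkowski sum of the supports, the spatial part of $p\v v$ equals $rp\,\v p_a+u_0\,\v p_u+(pq-u_0)\,\v p_c$ for some $\v p_a,\v p_c\in\Delta$. The weights sum to $rp+u_0+(pq-u_0)=p(r+q)=pv_0$, so the spatial part of $\v v$ divided by $v_0$ is a convex combination of $\v p_a,\v p_u,\v p_c$ with positive weight $u_0/(pv_0)$ on $\v p_u\in\mu$; the working principle then gives $\v v\in C(\mu)_\Z^+$. Consequently $\cartier(\omega_{\v u})=\sum_{\v v\in C(\mu)_\Z^+}F_{\v u,\v v}\,\omega^\sigma_{\v v}$, a $p$-adically convergent sum of elements of $\Omega_{f^\sigma}(\mu)$, so it lies in $\hat\Omega_{f^\sigma}(\mu)$; by $R$-linearity and passage to $p$-adic completions, $\cartier$ maps $\hat\Omega_f(\mu)$ into $\hat\Omega_{f^\sigma}(\mu)$.

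\textbf{Derivations, and the main obstacle.} For a derivation $\delta$ of $R$, extended by $\delta(x_i)=0$, I would compute $\delta(\omega_{\v u})=-\,u_0!\,\v x^{\v u}\,\delta(f)/f^{u_0+1}$, where $\delta(f)=\sum_i\delta(f_i)\,\v x^{\v a_i}$ is supported in $\supp(f)\subseteq\Delta$. Hence $\v x^{\v u}\delta(f)$ is an $R$-linear combination of monomials $\v x^{\v m}$ with $\v m=u_0\,\v p_u+\v a'$ for various $\v a'\in\Delta$, and $u_0!\,\v x^{\v m}/f^{u_0+1}=\omega_{(u_0+1,\,\v m)}$. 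Since $\v m/(u_0+1)=(u_0\,\v p_u+\v a')/(u_0+1)$ puts positive weight $u_0/(u_0+1)$ on $\v p_u\in\mu$, the working principle gives $\v m/(u_0+1)\in\mu$, i.e.\ $(u_0+1,\v m)\in C(\mu)_\Z^+$, so $\delta(\omega_{\v u})\in\Omega_f(\mu)$, and $R$-linearity finishes the claim. The only point that requires care is the bookkeeping in the Cartier step: separating the extra cone coordinate $u_0$ from the $n$ spatial coordinates, correctly reading off which exponent of $G^r\,\v x^{\v u}\,f^{\,pq-u_0}$ feeds into a given coefficient of $Q_r$ after applying $\cartier$, and tolerating the (here harmless) strictness in ``support of a product $\subseteq$ Minkowski sum of supports''. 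Once the extremality lemma is isolated, both halves of the proposition are short.
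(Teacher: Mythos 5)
Your proof is correct and follows essentially the same route as the paper's: reduce the claim to a combinatorial statement about faces of $\Delta$, and then read off the needed inclusions from the explicit formulas \eqref{cartiermatrix}--\eqref{cartiermatrix-entries} (for $\cartier$) and from $\delta(\omega_{\v u}) = -u_0!\,\v x^{\v u}\delta(f)/f^{u_0+1}$ (for $\delta$). The main difference is one of packaging and detail: the paper first reduces to $\mu$ being the complement of a \emph{single} face $\Sigma$ of $C(\Delta)$ and then asserts ``one easily sees that $\v u\in\Sigma^c$ and $F_{\v u,\v v}\ne 0$ imply $\v v\in\Sigma^c$'', whereas you isolate the underlying mechanism (extremality of faces: a nonnegative combination landing in a face forces all its positively-weighted constituents into that face) as a named lemma and carry it through the Minkowski-sum bookkeeping on the support of $G^r\,\v x^{\v u}f^{pq-u_0}$ explicitly. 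This buys you a self-contained justification of what the paper treats as obvious, and avoids the WLOG reduction to a single face; the paper's reduction in turn makes the assertion look more immediate by phrasing everything in terms of a single cone face $\Sigma$. Both are sound, and the two are logically interchangeable.
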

\begin{proof} Since open sets are intersections of the complements of faces, it is
enough to prove our statement for $\mu$ being such a complement. Without loss of
generality we assume that $\mu^c$ is a face of $\Delta$. In this case
$C(\mu) = \Sigma^c = C(\Delta) \setminus \Sigma$ where $\Sigma = C(\mu^c)$ is the
respective face of the cone $C(\Delta)$. The $R$-module $\Omega_f(\mu)$ is generated
by functions $\omega_\v u$ with $\v u \in \Sigma^c$. To prove our proposition we recall
that the Cartier operator~\eqref{cartier-f} is given explicitly by
formula~\eqref{cartiermatrix} and one easily sees that $\v u\in\Sigma^c$ and
$F_{\v u,\v v}\ne0$ imply $\v v\in \Sigma^c$.

Let $\delta$ be a derivation of $R$ and $\v u\in C(\mu)_\Z$. Observe that in the formula
\[
\delta (\omega_\v u) = - u_0! \frac{\v x^\v u \delta(f)(\v x)}{f(\v x)^{u_0+1}} \frac{d \v x}{\v x}
\] 
the support of $\v x^{\v u}\delta(f)$ is in $\v u+\Delta$, which again lies in $C(\mu)$ when $\mu$ 
is open in our sense.
\end{proof}

\begin{definition}\label{hassewitt}
Fix a non-empty subset $\mu \subseteq \Delta$ which is {\it open} in the
topology from Proposition~\ref{topology}.
Let $\mu_\Z=\mu \cap \Z^n$ be the set of integral points in $\mu$. We assume this set
is non-empty and let $h=\#\mu_\Z$ be the number of such points.
For any integer $m\ge1$ we define the $h\times h$-matrix $\beta_m=\beta_m(\mu)$ with entries
\be{beta-matrices}
(\beta_m)_{\v u,\v v \in \mu_\Z}:=\mbox{coefficient of $\v x^{m\v v-\v u}$ of }f(\v x)^{m-1}.
\ee
When $m=1$ we take for $\beta_1(\mu)$ the identity matrix. We call $\beta_p(\mu)$ the
\emph{Hasse--Witt matrix of $f$ relative to $\mu$}. 
\end{definition}

\begin{proposition}\label{cartiermodp}
Suppose that $p>2$ 
and $\mu \subseteq \Delta$ is a non-empty subset which is {\it open} in the topology defined
in Proposition~\ref{topology}. Then 
\[
\cartier(\hat\Omega_f(\mu))\subseteq
\span_R(\omega_{\v u}^\sigma)_{\v u\in\mu_\Z}+p\,\hat\Omega_{f^\sigma}(\mu).
\]
Moreover, for any $\v u\in\mu_\Z$ we have
\[
\cartier(\omega_{\v u}) \is \sum_{\v v \in \mu_\Z}(\beta_p)_{\v u,\v v}
\omega^\sigma_{\v v} \quad \mod{p \hat\Omega_{f^\sigma}(\mu)}.
\]
\end{proposition}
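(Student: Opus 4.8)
The plan is to run through the explicit formula for the Cartier operator derived in the proof of Proposition~\ref{cartier-def} and extract the leading ($r=0$) term. Recall from~\eqref{cartiermatrix} and~\eqref{cartiermatrix-entries} that
\[
\cartier(\omega_{\v u}) = \sum_{\v v \in C(\Delta)_\Z^+} F_{\v u,\v v}\,\omega^\sigma_{\v v},
\qquad
F_{\v u,\v v} = \frac{p^r}{r!}\,\frac{(u_0-1)!}{(\lceil u_0/p\rceil-1)!}\times(\text{coeff. of }\v x^{\v v}\text{ in }Q_r(\v x)),
\]
with $r = v_0 - \lceil u_0/p\rceil$. First I would specialize to $\v u \in \mu_\Z$, so that $u_0 = 1$ and $\lceil u_0/p\rceil = 1$; then the factorial prefactor $\frac{(u_0-1)!}{(\lceil u_0/p\rceil-1)!}$ is $1$, and $r = v_0 - 1$. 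The key observation is the $p$-adic estimate already established in~\eqref{cartier-entries-estimate}: $\ord_p(F_{\v u,\v v}) \ge \frac{p-2}{p-1}(v_0-1)$, which is $\ge 1$ as soon as $v_0 \ge 2$ (using $p > 2$, this quantity is $\ge \frac{p-2}{p-1} \ge 1/2$, and since $\ord_p$ is a nonnegative integer, $v_0-1 \ge 1$ forces it to be $\ge 1$ — actually one should be slightly careful and note $\frac{p-2}{p-1}(v_0-1) \ge 1$ precisely when $v_0 - 1 \ge \frac{p-1}{p-2}$, which for $p \ge 3$ means $v_0 - 1 \ge 2$; for $v_0 = 2$ one instead uses $\ord_p(F_{\v u, \v v}) \ge v_0 - 1 - \frac{r}{p-1} = 1 - \frac{1}{p-1} > 0$, hence $\ge 1$). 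Either way, all terms with $v_0 \ge 2$ lie in $p\,\hat\Omega_{f^\sigma}$, and since each such $\omega^\sigma_{\v v}$ with $\v v \in C(\mu)_\Z^+$ lies in $\hat\Omega_{f^\sigma}(\mu)$ (by the support argument in Proposition~\ref{topology}, $F_{\v u,\v v} \ne 0$ and $\v u \in C(\mu)$ force $\v v \in C(\mu)$), they lie in $p\,\hat\Omega_{f^\sigma}(\mu)$.

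It remains to identify the $v_0 = 1$ contribution, i.e.\ the $r = 0$ term. Here $Q_0(\v x) = \cartier(G(\v x)^0\,\v x^{\v u} f(\v x)^{p\lceil u_0/p\rceil - u_0}) = \cartier(\v x^{\v u} f(\v x)^{p-1})$, using $u_0 = 1$. By the definition~\eqref{cartier-on-series-def} of $\cartier$ on series, the coefficient of $\v x^{\v v}$ in $\cartier(\v x^{\v u} f(\v x)^{p-1})$ is the coefficient of $\v x^{p\v v}$ in $\v x^{\v u} f(\v x)^{p-1}$, which is the coefficient of $\v x^{p\v v - \v u}$ in $f(\v x)^{p-1}$ — and that is exactly $(\beta_p)_{\v u,\v v}$ by Definition~\ref{hassewitt}. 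So for $v_0 = 1$ and $\v v \in \mu_\Z$ we get $F_{\v u,\v v} = (\beta_p)_{\v u,\v v}$, and for $v_0 = 1$ with $\v v \in C(\mu)_\Z^+$ not in $\mu_\Z$ the support constraint (together with $\Delta$ being the Newton polytope, so only lattice points of $\Delta$ itself occur at height $u_0 = 1$) shows there is nothing new; more precisely, the height-one integral points of $C(\mu)$ are precisely $\mu_\Z$. Combining, $\cartier(\omega_{\v u}) = \sum_{\v v \in \mu_\Z}(\beta_p)_{\v u,\v v}\,\omega^\sigma_{\v v} + (\text{terms with }v_0 \ge 2) \equiv \sum_{\v v \in \mu_\Z}(\beta_p)_{\v u,\v v}\,\omega^\sigma_{\v v} \pmod{p\,\hat\Omega_{f^\sigma}(\mu)}$, which is the second displayed congruence.

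For the first displayed inclusion, I would argue that $\hat\Omega_f(\mu)$ is generated as a $p$-adically complete $R$-module by the $\omega_{\v u}$ with $\v u \in C(\mu)_\Z^+$, and split these generators by the height $u_0$. For $u_0 = 1$ (i.e.\ $\v u \in \mu_\Z$) the congruence just proved shows $\cartier(\omega_{\v u}) \in \span_R(\omega^\sigma_{\v v})_{\v v \in \mu_\Z} + p\,\hat\Omega_{f^\sigma}(\mu)$. For $u_0 \ge 2$ I would show $\cartier(\omega_{\v u}) \in p\,\hat\Omega_{f^\sigma}(\mu)$ directly from the estimate~\eqref{cartier-entries-estimate}: there $F_{\v u,\v v}$ carries the factor $\frac{(u_0-1)!}{(\lceil u_0/p\rceil-1)!}$ of order $\ge \lceil u_0/p\rceil - 1$, and $\ord_p(F_{\v u,\v v}) \ge v_0 - 1 - \frac{r}{p-1} \ge \lceil u_0/p\rceil - 1 \ge 1$ whenever $u_0 \ge p+1$; for $2 \le u_0 \le p$ one has $\lceil u_0/p\rceil = 1$, $r = v_0 - 1 \ge 1$ (since $v_0 \ge u_0 \ge 2$), and then $\ord_p(F_{\v u,\v v}) \ge v_0 - 1 - \frac{r}{p-1} = (v_0-1)\frac{p-2}{p-1} \ge 1$ for $p \ge 3$ and $v_0 \ge 2$ — so again every term lands in $p\,\hat\Omega_{f^\sigma}(\mu)$, using the support argument to stay inside the $\mu$-submodule. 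Passing to the $p$-adic completion to handle infinite $R$-linear combinations of generators is routine since everything is continuous for the $p$-adic topology.

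The step I expect to require the most care is the bookkeeping of the $p$-adic valuations at the boundary case $v_0 = 2$ (equivalently $r = 1$), where the estimate~\eqref{cartier-entries-estimate} gives $\frac{p-2}{p-1}(v_0-1) = \frac{p-2}{p-1}$, which is strictly less than $1$ for every $p$; one must not quote the displayed estimate blindly but instead use the sharper intermediate bound $\ord_p(F_{\v u,\v v}) \ge v_0 - 1 - \frac{r}{p-1}$ and the integrality of $\ord_p$ to conclude $\ord_p(F_{\v u,\v v}) \ge 1$. A secondary point worth spelling out is why the only integral points of the cone $C(\mu)$ at height $u_0 = 1$ are exactly the points of $\mu_\Z$, which is immediate from the fact that $\mu$ sits in the hyperplane $u_0 = 1$ and $C(\mu)$ is its real positive cone, so its height-one slice is $\mu$ itself.
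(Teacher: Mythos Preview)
Your argument for the second displayed congruence (the case $\v u\in\mu_\Z$, i.e.\ $u_0=1$) is correct and matches the paper's approach: split off the $v_0=1$ term, identify it with the Hasse--Witt entry, and push the $v_0\ge2$ tail into $p\,\hat\Omega_{f^\sigma}(\mu)$ using the integer-valued $p$-adic order (your remark that the displayed bound $\tfrac{p-2}{p-1}(v_0-1)$ is not $\ge1$ at $v_0=2$ but the sharper integer bound $r-\ord_p(r!)\ge1$ is the right fix).

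There is, however, a genuine gap in your treatment of the first inclusion. For $2\le u_0\le p$ you write ``$r=v_0-1\ge1$ (since $v_0\ge u_0\ge2$)'' and then conclude $\cartier(\omega_{\v u})\in p\,\hat\Omega_{f^\sigma}(\mu)$. But there is no inequality $v_0\ge u_0$ in the expansion~\eqref{cartiermatrix}: the only constraint is $v_0\ge\lceil u_0/p\rceil$, which for $2\le u_0\le p$ allows $v_0=1$, i.e.\ $r=0$. In that case $F_{\v u,\v v}$ is the coefficient of $\v x^{p\v v-\v u}$ in $f(\v x)^{p-u_0}$, which need not be divisible by $p$ (e.g.\ $n=1$, $f=1+x+x^2$, $p=3$, $\v u=(2,2)$, $\v v=(1,1)$ gives $F_{\v u,\v v}=1$). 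So $\cartier(\omega_{\v u})$ is \emph{not} in $p\,\hat\Omega_{f^\sigma}(\mu)$ in general for $u_0\ge2$; it merely lies in $\span_R(\omega^\sigma_{\v v})_{\v v\in\mu_\Z}+p\,\hat\Omega_{f^\sigma}(\mu)$, which is exactly what the proposition asserts. (Incidentally, the inequality $(v_0-1)\tfrac{p-2}{p-1}\ge1$ you invoke here also fails at $v_0=2$, $p=3$, the very case you flagged earlier.)

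The fix is to abandon the case split by $u_0$ and argue uniformly, as the paper does: for \emph{every} $\v u\in C(\mu)_\Z^+$ write $\cartier(\omega_{\v u})=\sum_{\v v\in C(\mu)_\Z^+}F_{\v u,\v v}\,\omega^\sigma_{\v v}$ and separate by $v_0$. The $v_0=1$ terms lie in $\span_R(\omega^\sigma_{\v v})_{\v v\in\mu_\Z}$ by definition. For $v_0\ge2$ one has $\ord_p(F_{\v u,\v v})\ge r-\ord_p(r!)+\ord_p\bigl((u_0-1)!/(\lceil u_0/p\rceil-1)!\bigr)\ge1$: if $\lceil u_0/p\rceil\ge2$ the factorial term already contributes $\ge1$; if $\lceil u_0/p\rceil=1$ then $r=v_0-1\ge1$ and $r-\ord_p(r!)\ge1$. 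This gives the inclusion directly.
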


\begin{proof}
From the proof of Propositions~\ref{cartier-def} and~\ref{topology}, in particular
equations~\eqref{cartiermatrix} and~\eqref{cartiermatrix-entries},
we know an expression for $\cartier(\omega_{\v u})$ as a linear combination
$\sum_{\v v \in C(\mu)_\Z^+} F_{\v u, \v v}$ for every $\v u\in\mu_\Z$.
Moreover, it follows from~\eqref{cartier-entries-estimate} that $F_{\v u,\v v}\is0\mod{p}$
when $v_0>1$. Our first statement follows immediately.
The observation that $(\beta_p)_{\v u,\v v}\is F_{\v u,\v v}\mod{p}$
whenever $v_0=1$ proves the second statement.
\end{proof}

\section{The unit-root crystal}\label{sec:convergence}

In this section we formulate the first main result of this paper. But first we need some preparations. 

\begin{definition}\label{Uf-def}
For a non-empty subset $\mu \subseteq \Delta$ which is {\it open} in the
topology from Proposition~\ref{topology}, define $U_f(\mu)=\widehat\Omega_f(\mu) \cap d\Omega_{\rm formal}$. 
\end{definition}

We call $U_f(\mu)$ the \emph{submodule of formal derivatives}.    
Differential $n$-forms associated to elements of $U_f(\mu)$ were called 
forms that 'die on formal expansion' by
Nick Katz in \cite[p.258]{Ka85}.
It turns out that one can give a characterization of $U_f(\mu)$ which does not
make any reference to $\Omega_{\rm formal}$:
 

\begin{proposition}\label{Uf-formal-exact}
With the notations as above we have 
\begin{equation}\label{Udefinition}
U_f(\mu)=\{\omega\in\widehat\Omega_f(\mu) \;|\; \cartier^s(\omega)\is0\mod{p^s \widehat \Omega_{f^{\sigma^s}}(\mu)}
\ \mbox{for all $s\ge1$}\}.
\end{equation}
\end{proposition}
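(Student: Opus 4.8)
The plan is to prove the two inclusions separately. The inclusion $\subseteq$ is the easy direction: if $\omega \in U_f(\mu) = \widehat\Omega_f(\mu) \cap d\Omega_{\rm formal}$, then in particular $\omega$ is a formal derivative in $\Omega_{\rm formal}$, so by Lemma~\ref{exactcondition2} we have $\cartier^s(\omega) \is 0 \mod{p^s}$ as an element of $\Omega_{\rm formal}$. But by iterating Proposition~\ref{cartier-def} (applied to $f, f^\sigma, f^{\sigma^2}, \dots$) we know $\cartier^s(\omega) \in \widehat\Omega_{f^{\sigma^s}}(\mu)$, using Proposition~\ref{topology} to stay inside the $\mu$-piece at each step. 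The only subtlety is upgrading ``$\cartier^s(\omega) \is 0 \mod{p^s}$ in $\Omega_{\rm formal}$'' to ``$\is 0 \mod{p^s \widehat\Omega_{f^{\sigma^s}}(\mu)}$'': one writes $\cartier^s(\omega) = p^s \eta$ with $\eta \in \Omega_{\rm formal}$ and must check $\eta$ actually lies in $\widehat\Omega_{f^{\sigma^s}}(\mu)$, which follows because $\Omega_{f^{\sigma^s}}(\mu) \hookrightarrow \Omega_{\rm formal}$ is an injection of $R$-modules and $R$ is a domain with $\cap_s p^s R = 0$, so division by $p^s$ is well-defined and compatible with the submodule.

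For the reverse inclusion $\supseteq$, suppose $\omega \in \widehat\Omega_f(\mu)$ satisfies $\cartier^s(\omega) \is 0 \mod{p^s \widehat\Omega_{f^{\sigma^s}}(\mu)}$ for all $s \ge 1$. Viewing everything inside $\Omega_{\rm formal}$ via formal expansion, this gives $\cartier^s(\omega) \is 0 \mod{p^s}$ in $\Omega_{\rm formal}$ for all $s$, hence $\omega \in d\Omega_{\rm formal}$ by Lemma~\ref{exactcondition2}. Since also $\omega \in \widehat\Omega_f(\mu)$ by hypothesis, we get $\omega \in \widehat\Omega_f(\mu) \cap d\Omega_{\rm formal} = U_f(\mu)$, as required. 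This direction is essentially immediate once the formal-expansion embedding is in place.

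The main point to get right — and the step I expect to be the real obstacle — is the careful bookkeeping of completions and embeddings. One must make sure that: (i) the embedding $\Omega_f(\mu) \hookrightarrow \Omega_{\rm formal}$ extends to the $p$-adic completions in a way that makes $\widehat\Omega_f(\mu) \to \Omega_{\rm formal}$ (or rather into a suitable completed series ring) injective, so that Lemma~\ref{exactcondition2} can legitimately be applied to $p$-adic limits and not just to honest regular functions; (ii) the Cartier operator on the completions agrees with~\eqref{cartier-on-series-def} under this embedding, which is already implicit in the explicit formulas~\eqref{cartiermatrix}–\eqref{cartiermatrix-entries}; and (iii) the congruence ``$\mod{p^s \widehat\Omega_{f^{\sigma^s}}(\mu)}$'' in the statement is genuinely equivalent, after embedding, to the naive coefficientwise congruence ``$\mod{p^s}$'' used in Lemma~\ref{exactcondition2}. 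Point (iii) is where the $\supseteq$ direction uses only one implication (the trivial one: submodule congruence implies coefficientwise congruence), while the $\subseteq$ direction needs the nontrivial converse, for which one invokes that $\Omega_{f^{\sigma^s}}(\mu)$ sits inside $\Omega_{\rm formal}$ as a $p$-saturated $R$-submodule — i.e. if $p^s \eta \in \Omega_{f^{\sigma^s}}(\mu)$ for $\eta$ a formal series, then already $\eta \in \widehat\Omega_{f^{\sigma^s}}(\mu)$. I would isolate this $p$-saturation fact as the key lemma and prove it directly from the definition of $\Omega_f(\mu)$ as a free $R$-module on the $\omega_{\v u}$ together with $\cap_s p^s R = 0$.
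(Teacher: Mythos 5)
Your direction $\supseteq$ (the containment of the right-hand side in $U_f(\mu)$) is correct and agrees with the paper's first paragraph: the hypothesis implies coefficientwise $p^s$-divisibility in $\Omega_{\rm formal}$, and Lemma~\ref{exactcondition2} gives $\omega\in d\Omega_{\rm formal}$, hence $\omega\in U_f(\mu)$. You label $\subseteq$ as ``the easy direction,'' but in fact it is the other way around: showing $U_f(\mu)\subseteq\{\omega:\cartier^s(\omega)\equiv 0 \bmod p^s\widehat\Omega_{f^{\sigma^s}}(\mu)\}$ is the hard part, precisely for the reason you then flag as a ``subtlety.''

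That subtlety is a genuine gap, and the key fact you propose to prove it with is false. You want: if $\eta\in\Omega_{\rm formal}$ and $p^s\eta\in\widehat\Omega_{f^{\sigma^s}}(\mu)$, then $\eta\in\widehat\Omega_{f^{\sigma^s}}(\mu)$. Your justification --- that the embedding is injective and $R$ is a $p$-torsion-free domain, so ``division by $p^s$ is well-defined and compatible with the submodule'' --- is a non sequitur. Torsion-freeness gives a unique quotient in the ambient module $\Omega_{\rm formal}$; it says nothing about whether that quotient lies back in the submodule. The submodule $\widehat\Omega_f(\mu)$ is \emph{not} $p$-saturated in $\Omega_{\rm formal}$ in general. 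Take $n=1$, $f=1-x$, $R=\Z_p$, $\mu=\Delta=[0,1]$. Then $\omega_{(p+1,0)}=p!/(1-x)^{p+1}\in\Omega_f$ has Laurent coefficients $p!\binom{n+p}{p}=(n+1)\cdots(n+p)$, each divisible by $p$. So $\eta=\omega_{(p+1,0)}/p=(p-1)!/(1-x)^{p+1}$ lies in $\Omega_{\rm formal}=\Z_p\lb x\rb$. But $\eta\notin\widehat\Omega_f$: since $(p-1)!$ is a $p$-adic unit, this is equivalent to $1/(1-x)^{p+1}\in\widehat\Omega_f$, which would force, via Theorem~\ref{main0} (here $\beta_p$ is the identity, hence invertible), a congruence $1/(1-x)^{p+1}\equiv A+B/(1-x)\bmod d\Omega_{\rm formal}$ with $A,B\in\Z_p$ --- and Remark~\ref{factorialsneeded} shows exactly this is impossible. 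So the $p$-saturation lemma you want to ``isolate and prove directly'' cannot be proved because it is false, and your proof of $\subseteq$ collapses.

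The paper sidesteps the general saturation question by using the structural refinement in Proposition~\ref{cartiermodp}: for $\omega\in U_f(\mu)$ one has $\cartier(\omega)=A(\v x)/f^\sigma(\v x)+p\omega_1$ with $A$ supported in $\mu_\Z$ and $\omega_1\in\widehat\Omega_{f^\sigma}(\mu)$. The formal-derivative hypothesis forces the Laurent expansion of $\cartier(\omega)$, and hence of $A/f^\sigma$, to be divisible by $p$; multiplying back by $f^\sigma$ and using that $f^\sigma_{\v b}$ is a unit shows $p\mid A$. Division by $p$ then \emph{does} stay inside $\widehat\Omega_{f^\sigma}(\mu)$ because the pole-order-one piece $\span_R(\omega^\sigma_{\v u})_{\v u\in\mu_\Z}$ is a free $R$-module whose Laurent-expansion map is an isometry (so it is $p$-saturated, unlike the full module). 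This gives $\cartier(\omega)=p\omega_2$ with $\omega_2\in\widehat\Omega_{f^\sigma}(\mu)$, and since $\omega_2$ is also a formal derivative the argument iterates to give $\cartier^s(\omega)\in p^s\widehat\Omega_{f^{\sigma^s}}(\mu)$. If you want to repair your proof, this is the mechanism you need: the saturation you use must be restricted to the explicit pole-order-one summand supplied by Proposition~\ref{cartiermodp}, not asserted for the whole module.
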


\begin{proof}
Let $\omega \in \widehat \Omega_f(\mu)$. Suppose that $\cartier^s(\omega)\is0\mod{p^s \widehat \Omega_{f^{\sigma^s}}(\mu)}$ for all $s\ge1$. Then it follows from Lemma~\ref{exactcondition2}  that $\omega \in d\Omega_{\rm formal}$.

Suppose conversely that $\omega \in \widehat\Omega_f(\mu) \cap d\Omega_{\rm formal}$. From the first part of Proposition \ref{cartiermodp} it follows that $\cartier(\omega)
=\frac{A(\v x)}{f^\sigma(\v x)}+p\omega_1$ for some $\omega_1\in\hat\Omega_{f^\sigma}(\mu)$
and $A(\v x)$ a Laurent polynomial with support in $\mu_\Z$. Since $\omega\in d\Omega_{\rm formal}$
we have that the Laurent series of $\cartier(\omega)$ is divisible by $p$. This implies
that $p$ divides $A(\v x)$. Hence $\cartier (\omega) = p \omega_2$ with 
$\omega_2=\omega_1+A(\v x)/pf^\sigma(\v x)\in\widehat\Omega_{f^\sigma}(\mu)$. 
Applying this observation recursively then yields 
$\cartier^s (\omega) \in p^s \widehat \Omega_{f^{\sigma^s}}(\mu)$, which ends our proof.
\end{proof}

It is clear from Definition~\ref{Uf-def} that the Cartier operator preserves this submodule and 
is divisible by $p$ on it, that is we have
\[
\cartier: U_f(\mu) \to p \, U_{f^\sigma}(\mu).
\]
Recall that the Cartier operator commutes with the connection operations for all derivations
$\delta$ of $R$. It is then immediate from Definition~\ref{Uf-def} that all $\delta$ preserve $U_f(\mu)$.
In other words, $U_f(\mu)$ is a differential submodule of $\widehat \Omega_f(\mu)$.   

\begin{theorem}\label{main0}
Assume that the Hasse--Witt matrix $\beta_p(\mu)$ is invertible in $R$. Then the quotient 
\[
Q_f(\mu) := \widehat \Omega_f(\mu)/ d \Omega_{\rm formal}
\] 
is a free $R$-module of rank $h= \# \mu_\Z$ with a basis given by the images of $\omega_{\v u},\v u \in \mu_\Z$. 
\end{theorem}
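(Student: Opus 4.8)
The plan is to show that the images of $\omega_{\v u},\,\v u\in\mu_\Z$ span $Q_f(\mu)$ over $R$ and that they are $R$-linearly independent there. For the spanning statement, I would combine Proposition~\ref{cartiermodp} with the characterization of $U_f(\mu)$ from Proposition~\ref{Uf-formal-exact}. Given an arbitrary $\omega\in\widehat\Omega_f(\mu)$, the goal is to produce, by successive approximation, an element of $\span_R(\omega_{\v u})_{\v u\in\mu_\Z}$ congruent to $\omega$ modulo $d\Omega_{\rm formal}$. The key device is the composite operator $\Phi:=\beta_p^{-1}\cdot\cartier$ (reading matrices as acting on the coordinate vectors with respect to the $\omega_{\v u}$), together with its iterates $\Phi^{(s)}$ built from $\cartier^s$ and the appropriate product of Frobenius-twisted Hasse--Witt matrices; invertibility of $\beta_p(\mu)$ in $R$ (equivalently, mod $p$, hence mod every $p^s$) is exactly what makes $\Phi$ well-defined $R$-linearly on $\widehat\Omega_f(\mu)$. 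By Proposition~\ref{cartiermodp}, $\cartier$ sends $\widehat\Omega_f(\mu)$ into $\span_R(\omega^\sigma_{\v v})_{\v v\in\mu_\Z}+p\,\widehat\Omega_{f^\sigma}(\mu)$ and acts on the ``top'' part $v_0=1$ via $\beta_p$; hence $\Phi$ maps $\widehat\Omega_f(\mu)$ into $\span_R(\omega^\sigma_{\v v})_{\v v\in\mu_\Z}+p\,\widehat\Omega_{f^\sigma}(\mu)$ and is the identity on the span modulo $p$. One then shows $\Phi^{(s)}(\omega)$ stabilizes $p$-adically: its components along the $\omega_{\v u}^{\sigma^s}$ form a Cauchy sequence in $R$, so $\Phi^{(s)}(\omega)$ converges to some $\eta\in\span_R(\omega_{\v u})_{\v u\in\mu_\Z}$ (after untwisting, using that $R$ is $p$-adically complete), while $\omega-(\text{the corresponding }\beta\text{-twist of }\eta)$ has all Cartier iterates divisible by increasing powers of $p$, i.e.\ lies in $U_f(\mu)$ by Proposition~\ref{Uf-formal-exact}. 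This exhibits $\omega$ modulo $d\Omega_{\rm formal}$ as an $R$-combination of the $\omega_{\v u}$.

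For linear independence, suppose $\sum_{\v u\in\mu_\Z}c_{\v u}\omega_{\v u}\in d\Omega_{\rm formal}$ with $c_{\v u}\in R$ not all zero. Dividing by the largest power of $p$ dividing $\gcd(c_{\v u})$ we may assume some $c_{\v u}$ is a unit, so the vector $\v c=(c_{\v u})$ is nonzero mod $p$. By Proposition~\ref{Uf-formal-exact} the element $\sum c_{\v u}\omega_{\v u}$ lies in $U_f(\mu)$, so $\cartier(\sum c_{\v u}\omega_{\v u})\equiv 0\bmod p$; but by the second part of Proposition~\ref{cartiermodp} this congruence reads $\sum_{\v v}(\,\sum_{\v u}c_{\v u}(\beta_p)_{\v u,\v v})\,\omega^\sigma_{\v v}\equiv 0\bmod{p\,\widehat\Omega_{f^\sigma}(\mu)}$, i.e.\ $\v c\,\beta_p\equiv\v 0\bmod p$. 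Invertibility of $\beta_p$ mod $p$ forces $\v c\equiv\v 0\bmod p$, a contradiction. Hence no nontrivial such relation exists, and together with the spanning statement this proves $Q_f(\mu)$ is free of rank $h$ on the images of $\omega_{\v u},\,\v u\in\mu_\Z$.

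The main obstacle I anticipate is the convergence argument in the spanning step: one must verify carefully that the twisted iterates $\Phi^{(s)}$ are honestly $R$-linear and well-defined on the completions $\widehat\Omega_f(\mu)$ (rather than merely on $\Omega_f(\mu)$), that the error terms genuinely accumulate powers of $p$ so that the sequence of ``top'' coordinate vectors is Cauchy, and that the bookkeeping with the Frobenius twists $f\rightsquigarrow f^\sigma\rightsquigarrow f^{\sigma^2}\rightsquigarrow\cdots$ (and the corresponding Hasse--Witt matrices $\beta_p(\mu),\beta_p^\sigma(\mu),\dots$, whose product governs $\cartier^s$ on the top part by the cocycle-type identity $\beta_{p^{s+1}}\equiv\beta_p\,\sigma(\beta_{p^s})\bmod p$ implicit in the setup) is consistent. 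A secondary subtlety is making sure the decomposition $\cartier(\omega)=A(\v x)/f^\sigma(\v x)+p\,\omega_1$ from Proposition~\ref{cartiermodp}, and hence the factorization through the $v_0=1$ part, is respected by all the iteration steps; once this is in place the argument is a standard $p$-adic successive-approximation scheme.
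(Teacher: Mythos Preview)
Your approach is the same as the paper's: the paper abstracts the successive-approximation scheme into a general Proposition~\ref{fundamental} (about a sequence of modules $M_i$, maps $\phi_i$ with $\phi_i(M_{i-1})\subset N_i+pM_i$, and the induced isomorphisms $N_{i-1}/pN_{i-1}\to N_i/pN_i$), and then applies it with $M_i=\widehat\Omega_{f^{\sigma^i}}(\mu)$, $\phi_i=\cartier$, $N_i=\span_R(\omega^{\sigma^i}_{\v u})_{\v u\in\mu_\Z}$. Your spanning argument is exactly the proof of part~(i) of that proposition, unwound in this specific setting, and your linear-independence argument is part~(iv).

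There is, however, a real gap in your independence step. After writing $\v c=p^s\v c'$ you assert that $\sum c'_{\v u}\omega_{\v u}$ still lies in $U_f(\mu)=\widehat\Omega_f(\mu)\cap d\Omega_{\rm formal}$, but $d\Omega_{\rm formal}$ is \emph{not} $p$-saturated in $\Omega_{\rm formal}$: by Lemma~\ref{exactcondition} the condition is $a_{\v k}\equiv0\bmod\gcd(\v k)$, and when $p^{s+1}\mid\gcd(\v k)$ the implication $p^s a_{\v k}\equiv0\Rightarrow a_{\v k}\equiv0$ fails. So you cannot divide first and then apply $\cartier$ once. The repair is to iterate instead, which is precisely the paper's argument: from $\omega=\sum c_{\v u}\omega_{\v u}\in N_0\cap U_f(\mu)$, a single application of $\cartier$ together with $N_1\cap pM_1=pN_1$ gives $\v c\,\beta_p\equiv0\bmod p$, hence $\v c=p\v c^{(1)}$; then $\cartier^2(\omega)\in p^2M_2$ and $p$-torsion-freeness of $M_2$ give $\cartier^2(\sum c^{(1)}_{\v u}\omega_{\v u})\in pM_2$, whence $\v c^{(1)}\beta_p\beta_p^\sigma\equiv0\bmod p$ and $\v c\equiv0\bmod p^2$; continuing, $\v c\equiv0\bmod p^s$ for all $s$, so $\v c=0$. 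Equivalently (and this is how the paper phrases it), pick $s$ minimal with $p^{-s}\omega\in M_0\setminus pM_0$, note $p^{-s}\omega\in N_0$ since $M_0/N_0$ is $p$-torsion-free, and then $\cartier^{s+1}(p^{-s}\omega)\not\in pM_{s+1}$ contradicts $\cartier^{s+1}(\omega)\in p^{s+1}M_{s+1}$.
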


Strictly speaking,the quotient $ \widehat \Omega_f(\mu)/ d \Omega_{\rm formal}$ should be read as $ \widehat \Omega_f(\mu)/ U_f(\mu)$ since $U_f(\mu)=\widehat\Omega_f(\mu) \cap d\Omega_{\rm formal}$. We prefer to use the former, more suggestive, notation. 

\begin{remark}\label{invertible-crystal-remark} Recall that we work under assumptions that $\cap_s p^s R = \{ 0 \}$ and $R$ is $p$-adically complete,
in which case an element of $R$ is invertible if and only if it is invertible modulo $p$.
Indeed, if $u v = 1 + p w$ then the inverse element is given by 
$u^{-1}=v(1 + p w)^{-1} = \sum_{k \ge 0} (-p)^k v w^k$. 
With this observation, we conclude from Theorem~\ref{main0} and Proposition~\ref{cartiermodp}
that the Cartier operator on the quotients 
\[
\cartier: Q_f(\mu) \to Q_{f^\sigma}(\mu)
\]
is invertible because its matrix in the bases $\{\omega_\v u\}, \{\omega_\v u^\sigma\}$
is congruent modulo $p$ to the (transposed) Hasse--Witt matrix $\beta_p(\mu)$.
\end{remark}

Later we will give an explicit $p$-adic formula for the Cartier matrices on the quotients $Q_f(\mu)$
using matrices $\beta_{p^s}(\mu)$ for $s \ge 1$ (see Theorem~\ref{main1}).
The proof of Theorem~\ref{main0} exploits the $p$-adic contraction property of
the Cartier operator from Proposition~\ref{cartiermodp}. The main argument is
essentially contained in the following

\begin{proposition}\label{fundamental}
Let $M_0,M_1,M_2,\ldots$ be an infinite sequence of $R$-modules and
$\phi_i:M_{i-1}\to M_i$ $R$-linear maps for all $i\ge1$.
Suppose that $\cap_{s\ge1}p^sM_i=\{0\}$ for all $i$. For each $i$ let $N_i$ be a 
submodule of $M_i$ such that $\phi_i(M_{i-1})\subset N_i+pM_i$ for all $i\ge1$.
Suppose that $N_i \cap p M_i = p N_i$ 
(equivalently, $M_i/N_i$ is $p$-torsion free) and the induced maps $\phi_i:N_{i-1}/pN_{i-1}\to N_i/pN_i$
are isomorphisms for all $i\ge1$.  
Define submodules
\[
U_i=\{\omega\in M_i| \phi_{i+s}\circ\phi_{i+s-1}\circ\cdots\circ\phi_{i+1}(\omega)\is0\mod{p^sM_{i+s}}
\ \mbox{for all $s\ge1$}\} \subset M_i. 
\]

Then, for all $i$,
\begin{enumerate}
\item[(i)] $M_i=N_i+U_i$. 
\item[(ii)] $\phi_i(U_{i-1})\subset p U_i$.
\item[(iii)] $\phi_i(M_{i-1})\subset N_i+p U_i$.
\item[(iv)] $N_i\cap U_i=\{0\}$.
\end{enumerate}
\end{proposition}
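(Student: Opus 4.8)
The plan is to prove the four statements essentially simultaneously, by establishing a few key structural facts and then bootstrapping between the indices. First I would observe that the hypotheses are ``self-propagating'': the condition $\phi_i(M_{i-1}) \subset N_i + pM_i$ together with $N_i \cap pM_i = pN_i$ and the isomorphy of $\phi_i$ on the quotients $N_{i-1}/pN_{i-1} \to N_i/pN_i$ is exactly the setup we need at every level, so the whole statement is really about iterating a one-step lemma. The engine of the argument is a \emph{lifting-and-correcting} procedure: given $\omega \in M_i$, I want to subtract an element of $N_i$ so that the remainder lies in $U_i$; for this I need to control the images under long compositions $\phi_{i+s} \circ \cdots \circ \phi_{i+1}$.

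The core step is a successive approximation argument for (i). Given $\omega \in M_i$, I would construct a sequence $n^{(1)}, n^{(2)}, \ldots \in N_i$ with $n^{(k+1)} - n^{(k)} \in p^k N_i$ such that $\phi_{i+s}\circ\cdots\circ\phi_{i+1}(\omega - n^{(k)}) \equiv 0 \pmod{p^s M_{i+s}}$ for all $s \le k$. To build $n^{(k+1)}$ from $n^{(k)}$: set $\eta = \omega - n^{(k)}$, so the ``obstruction at level $k+1$'' is $\phi_{i+k+1}\circ\cdots\circ\phi_{i+1}(\eta) \in p^k M_{i+k+1}$; write it as $p^k \xi$. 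Using $\phi_{i+k+1}(M_{i+k}) \subset N_{i+k+1} + pM_{i+k+1}$ and descending through the levels, I want to realize (the class of) $\xi$ as coming from an element of $N_i$ scaled by $p^k$; here the fact that each $\phi_j$ induces an \emph{isomorphism} on $N_{j-1}/pN_{j-1} \to N_j/pN_j$ is crucial, because it lets me pull the correction term all the way back from level $i+k+1$ to level $i$, and the torsion-freeness $N_j \cap pM_j = pN_j$ lets me divide cleanly. The correction is then $n^{(k+1)} = n^{(k)} + p^k(\text{that pulled-back element})$, and one checks the lower-level congruences $s \le k$ are undisturbed because we only changed $\omega - n^{(k)}$ by something in $p^k M_i$, which under $s < k+1$ applications of the $\phi$'s lands in $p^k M_{i+s} \subseteq p^s M_{i+s}$. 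Since $\cap_s p^s N_i = \{0\}$ (which follows from $\cap_s p^s M_i = \{0\}$ together with $N_i \cap pM_i = pN_i$ giving $N_i \cap p^s M_i = p^s N_i$, plus $p$-adic completeness isn't even needed — Cauchy-ness gives a limit inside $N_i$ if $N_i$ is complete, but actually I only need the differences to telescope, so I should be careful here and perhaps assume/argue completeness of $N_i$ or phrase the limit more carefully), the sequence $n^{(k)}$ converges to some $n \in N_i$ with $\omega - n \in U_i$. That proves $M_i = N_i + U_i$.

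With (i) in hand the rest is quick. For (iv), suppose $\omega \in N_i \cap U_i$; then $\phi_{i+1}(\omega) \in \phi_{i+1}(N_i) \subset N_{i+1}$ and also $\phi_{i+1}(\omega) \in pM_{i+1}$ (the $s=1$ condition), so by $N_{i+1} \cap pM_{i+1} = pN_{i+1}$ we get $\phi_{i+1}(\omega) \in pN_{i+1}$; writing $\phi_{i+1}(\omega) = p\omega'$ with $\omega' \in N_{i+1}$, and checking that $\omega' \in U_{i+1}$ (divide the higher congruences by $p$, using torsion-freeness again), I get by induction that $\omega$ maps into $\cap_s p^s N_{i+1} = \{0\}$ under $\phi_{i+1}$. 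But that only says $\phi_{i+1}(\omega) = 0$, not $\omega = 0$ — so I actually need: the class of $\omega$ in $N_i/pN_i$ maps to the class of $\phi_{i+1}(\omega)$ in $N_{i+1}/pN_{i+1}$, which is $0$ since $\phi_{i+1}(\omega) \in pN_{i+1}$; by injectivity of the induced map $\omega \in pN_i$, say $\omega = p\tilde\omega$, and one shows $\tilde\omega \in N_i \cap U_i$ again (the defining congruences for $U_i$ divide by $p$), so $\omega \in \cap_s p^s N_i = \{0\}$. For (ii): if $\omega \in U_{i-1}$, then in particular $\phi_i(\omega) \equiv 0 \pmod{pM_i}$, and $\phi_i(\omega) \in N_i + pM_i$... hmm, to conclude $\phi_i(\omega) \in pU_i$ I write $\phi_i(\omega) = p\psi$ (possible since $\cap_s p^s M_i = \{0\}$ lets me divide once as the element is in $pM_i$), and the defining congruences for $U_{i-1}$ at level $s+1$ say $p^{s+1} \mid \phi_{i+s}\circ\cdots\circ\phi_i(\omega) = p\,\phi_{i+s}\circ\cdots\circ\phi_{i+1}(\psi)$, hence $p^s \mid \phi_{i+s}\circ\cdots\circ\phi_{i+1}(\psi)$ in $M_{i+s}$ (again using $p$-torsion-freeness of $M_{i+s}$ coming from $\cap_s p^s M_{i+s} = \{0\}$), so $\psi \in U_i$. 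Finally (iii) is immediate from (i) applied at level $i$ (giving $M_{i-1} = N_{i-1} + U_{i-1}$), $\phi_i(N_{i-1}) \subset N_i$ (clear, since $\phi_i$ restricted to $N_{i-1}$ lands in $N_i$ — wait, is that assumed? the hypothesis only gives an induced map on quotients; I should note $\phi_i(N_{i-1}) \subseteq N_i$ must be part of the data for ``the induced maps $\phi_i: N_{i-1}/pN_{i-1} \to N_i/pN_i$'' to make sense, so yes), and (ii).

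\textbf{Main obstacle.} The delicate point is the back-propagation of the correction term through many levels in the proof of (i): I need to take an obstruction living at level $i+k+1$ and express it, modulo the appropriate power of $p$, as the image of something in $N_i$. This requires chaining the isomorphisms $N_{j-1}/pN_{j-1} \xrightarrow{\sim} N_j/pN_j$ while simultaneously using $\phi_j(M_{j-1}) \subset N_j + pM_j$ to know that the obstruction's class actually lies in the image of $N_j$ at each stage — and tracking that the errors incurred at each division by $p$ stay in high enough powers of $p$ that they can be absorbed into the next approximation step. Getting the bookkeeping of $p$-powers exactly right across the $k+1$ levels, and making sure the convergence of $n^{(k)}$ is genuinely inside $N_i$ (which is where I may need $p$-adic completeness of $R$, hence of finitely-generated or closed $R$-submodules — or an explicit completeness hypothesis), is the part that demands the most care.
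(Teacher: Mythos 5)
Your overall plan --- successive approximation for (i), then derive (ii), (iii), (iv) quickly --- is essentially the route the paper takes. The paper's version of your (i)-loop works with $\omega_s := p^{-s}\psi_s(\omega)$ where $\psi_s=\phi_s\circ\cdots\circ\phi_1$, pushes it forward one more step via $\phi_{s+1}$, finds a preimage $\eta_s\in N_s$ of the leading term, and pulls $\eta_s$ back to $N_0$ in one shot through the composite isomorphism $\psi_s:N_0/pN_0\xrightarrow{\sim}N_s/pN_s$; that is exactly the chaining of level-by-level isomorphisms you describe, just packaged as a single map. Your (ii), and your corrected argument for (iv) (iterated division by $p$ using injectivity of the induced map, landing in $\cap_s p^sN_i=\{0\}$), also match the paper in spirit; the paper's (iv) chooses the maximal $s$ with $p^{-s}\omega\notin pM_0$ and applies $\psi_{s+1}$ once instead of iterating, but this is cosmetic.

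There is, however, one genuine misunderstanding you should fix. In your treatment of (iii) you assert, after hesitating, that $\phi_i(N_{i-1})\subseteq N_i$ ``must be part of the data'' for the induced map $N_{i-1}/pN_{i-1}\to N_i/pN_i$ to make sense. This is false, and in fact false in the paper's own application: by Proposition~\ref{cartiermodp} one only has $\cartier(\omega_{\v u})\in N_i+pM_i$, not $\cartier(\omega_{\v u})\in N_i$. The paper defines the induced map differently: since $\phi_i(M_{i-1})\subset N_i+pM_i$ and $N_i\cap pM_i=pN_i$, the reduction $\phi_i\bmod p$ already factors as a map $M_{i-1}/pM_{i-1}\to N_i/pN_i$, and one then \emph{restricts} this to the subspace $N_{i-1}/pN_{i-1}$. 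Your deduction ``$\phi_i(M_{i-1})=\phi_i(N_{i-1})+\phi_i(U_{i-1})\subseteq N_i+pU_i$'' therefore has a hole. It can be patched: $\phi_i(N_{i-1})\subseteq N_i+pM_i$, and by (i) $pM_i\subseteq pN_i+pU_i\subseteq N_i+pU_i$, so the conclusion survives --- but the patch must use (i), which is exactly what the paper does when it decomposes $\phi_i(\omega)=\omega_1+p\omega_1'$ and then splits $\omega_1'$ via (i). Finally, your worry about convergence of $n^{(k)}$ inside $N_i$ is a legitimate one: the abstract statement does not list completeness among the hypotheses, yet the proof (the paper's and yours) needs the limit $\sum_s p^s\eta_s$ to exist in $N_0$; in the intended application $R$ is $p$-adically complete and $N_i$ is free of finite rank over $R$, so this is fine, but you are right that a careful statement should make it explicit.
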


\begin{proof} Note that~(ii) is an immediate consequence of the definition of $U_i$'s.
Indeed, for $\omega\in U_{i-1}$ the element $\omega_1\in M_i$ such that $\phi_i(\omega)=p\omega_1$
satisfies $\phi_{i+s}\circ\cdots\circ\phi_{i+1}(\omega_1) = 
\frac1p \phi_{i+s}\circ\cdots\circ\phi_{i}(\omega) \in p^{s}M_{i+s}$ for all $s \ge 1$.

Let us show that (iii) follows easily from (i). For any $\omega \in M_{i-1}$
write $\phi_i(\omega) = \omega_1 + p \omega_1'$ with $\omega_1 \in N_i$, $\omega_1' \in M_i$.
Using (i) we can write $\omega_1' = \nu_1 + u_1$ with $\nu_1 \in N_i$, $u_1 \in U_i$.
Thus we get $\phi_i(\omega)=\omega_1+\nu_1 + p u_1 \in N_i + p U_i$.

Proof of~(i). Clearly it is enough to do it for $i=0$.

Consider $\phi_i$ modulo $p$, which is a map from $M_{i-1}/pM_{i-1}$ to $M_i/pM_i$. By the assumption that $\phi_i(M_{i-1}) \subset N_i + p M_i$, the image of $\phi_i \mod p$ lies in $N_i / (N_i \cap p M_i)$. Since we also assume that $N_i \cap pM_i = p N_i$, the image of $\phi_i \mod p$ lies in $N_i/pN_i$. Restricting $\phi_i \mod p$ to $N_{i-1}/pN_{i-1}$ we obtain what we call \emph{the induced map} $\phi_i : N_{i-1}/pN_{i-1} \to N_i/pN_i$. It is assumed that this induced map is invertible for each $i$, and hence the composition $\psi_i:=\phi_i\circ\cdots\phi_2\circ\phi_1:N_0/p N_0\to N_i/p N_i$ is an isomorphism for all $i\ge1$. Define for each $s\ge1$
\[
U_0^{(s)}=\{\omega\in M_0| \psi_t(\omega)\in p^tM_t
\ \mbox{for all $t\le s$}\}
\]
and $U_0^{(0)}=M_0$. In particular observe that $U_0^{(s+1)}\subset U_0^{(s)}$ for all $s\ge0$. 
We first show that $U_0^{(s)}=U_0^{(s+1)}+p^sN_0$ for all $s\ge0$. Let $\omega\in U_0^{(s)}$. Then
$\omega_s:=p^{-s}\psi_s(\omega)\in M_s$. By our assumptions there
exists $\eta_s\in N_s$ such that $\phi_{s+1}(\eta_s)\is\phi_{s+1}(\omega_s)\mod{pM_{s+1}}$.
Choose $\eta_0\in N_0$ such that $\psi_s(\eta_0)\is \eta_s\mod{pM_s}$.
Then,
\begin{eqnarray*}
\psi_{s+1}(\omega-p^s\eta_0)
&\is&\phi_{s+1}(p^s\omega_s-p^s\eta_s)\mod{p^{s+1}M_{s+1}}\\
&\is&p^s(\phi_{s+1}(\omega_s)-\phi_{s+1}(\eta_s))\mod{p^{s+1}M_{s+1}}\\
&\is&0\mod{p^{s+1}M_{s+1}}.
\end{eqnarray*}
Hence $\omega-p^s\eta_0\in U_0^{(s+1)}$. 

Let $\omega\in M_0$. For $s\ge1$ we define $\omega_s\in U_0^{(s)}$ inductively via
$\omega_s=\omega_{s+1}+p^s\eta_s,\ \eta_s\in N_0$. One easily sees that
$\omega-\sum_{s\ge1}p^s\eta_s\in \cap_{s\ge1}U_0^{(s)}=U_0$. Hence we conclude that $M_0=N_0+U_0$.

We finally show that $N_0\cap U_0$ is trivial. 
Suppose, on the
contrary, that $\omega\in N_0\cap U_0$ and $\omega\ne0$. Because $\cap_{s\ge1}p^sM_0=\{ 0 \}$
there exists $s\ge0$ such that
$p^{-s}\omega\in M_0\setminus pM_0$. Since $M_0/N_0$ is $p$-torsion free this implies
that $p^{-s}\omega\in N_0\setminus pM_0$.
Since $\psi_i:N_0/pN_0\to N_i/pN_i$ is an isomorphism we
have that $\psi_i(p^{-s}\omega)\not\in pM_i$ for all $i$. In particular
for $i=s+1$ we get $\psi_{s+1}(p^{-s}\omega)\not\in pM_{s+1}$. Hence $\psi_{s+1}(\omega)
\not\in p^{s+1}M_{s+1}$. This contradicts the fact that $\omega\in U_0$. Thus we get
a contradiction and conclude that $N_0\cap U_0$ is trivial. 
\end{proof}

\begin{proof}[Proof of Theorem~\ref{main0}.]
We apply Proposition \ref{fundamental} to $M_i=\widehat\Omega_{f^{\sigma^i}}(\mu)$ and 
$\phi_i=\cartier$
for all $i\ge0$. For $N_i$ we take the $\span_R(\omega^{\sigma^i}_{\v u})_{\v u\in\mu_\Z}$. 
The property $N_i \cap p M_i = p N_i$ clearly holds. Proposition~\ref{cartiermodp} states
that $\phi_i(M_i) \subset N_i + p M_i$ and the matrix of $\phi_i: N_{i-1}/pN_{i-1} 
\to N_i / p N_i$ is given by $\beta_p^{\sigma^{i-1}}(\mu)$ modulo~$p$, which is 
invertible by the assumption in Theorem~\ref{main0}. So the assumptions of Proposition 
\ref{fundamental} are satisfied.

From Proposition~\ref{Uf-formal-exact} we find that $U_0 = U_f(\mu)$. Then application of parts (i) and (iv)
of Proposition \ref{fundamental} shows that 
\[
\hat\Omega_f(\mu)=\span_R(\omega_{\v u})_{\v u\in\mu_\Z}\oplus U_f(\mu)
\]
as $R$-modules. Hence $Q_f\cong \span_R(\omega_{\v u})_{\v u\in\mu_\Z}$.
\end{proof}

\begin{remark}\label{stronger-congruence} Parts (iii) and (iv) in Proposition~\ref{fundamental} imply that 
\[
\cartier(\widehat \Omega_f(\mu)) \subset \span_R(\omega^\sigma_{\v u})_{\v u\in\mu_\Z}\oplus p \, U_{f^\sigma}(\mu).
\]
\end{remark}

\begin{remark}\label{factorialsneeded}
Theorem \ref{main0} is not true if we would have defined
$\omega_{\v u}$ without the factorial $(u_0-1)!$.
To see this take the simplest example $f=1-x$ in one variable and $R=\Z_p$.
Theorem \ref{main0} implies that every rational function $(k-1)!\frac{x^r}{(1-x)^k}$
with $0\le r\le k$ is modulo (formal) derivatives equivalent to a function of the
form $A+\frac{B}{1-x}$. Now drop the factorial, take $k=p+1,r=0$ and suppose there
exist $A,B\in\Z_p$ such that $A+\frac{B}{1-x}=\frac{1}{(1-x)^{p+1}}+xu'$
for some rational function $u$. Apply $\cartier$ modulo $p$ on both sides. The derivative $xu'$
is mapped to $0$, $\frac{1}{1-x}$ is mapped to itself and we get
\[
A+\frac{B}{1-x}\is \cartier\left(\frac{1}{(1-x)^p(1-x)}\right)
\is \cartier\left(\frac{1}{(1-x^p)(1-x)}\right)\is \frac{1}{(1-x)^2}\mod{p}.
\]
On the right of this equality we see a rational function with a double pole at $x=1$
on the left a simple pole. This is clearly contradictory.
\end{remark}

\begin{remark} Knowledge of the explicit basis in $Q_f(\mu)$ from Theorem~\ref{main0}
implies that this $R$-module is in fact a quotient of $\Omega_f(\mu)$. However writing 
it as a quotient of the completion $\widehat \Omega_f(\mu)$ yields the Cartier operator on $Q_f(\mu)$.
Note also that $Q_f(\mu)$ is a subquotient of the Dwork module $\dwork$ because derivatives
are contained in $U_f(\mu)$.
\end{remark}

We would like to point out that $R$-modules $\widehat\Omega_f(\mu)$, $U_f(\mu)$,
completed Dwork modules $\widehat\Omega_f(\mu) / d \widehat\Omega_f(\mu)$ and the quotients
$Q_f(\mu)$ from Theorem~\ref{main0} together with the Cartier operator $\cartier$ are examples
of the following structure. For the scope of this paper, we give the following

\begin{definition}
A crystal over $R$ is a rule that assigns
\begin{itemize}
\item to a polynomial $f$ with coefficients in $R$ a differential $R$-module $M_f$, that is for 
every derivation $\delta$ of $R$ we have maps $\delta: M_f \to M_f$ satisfying 
$\delta(r m) = \delta(r) m + r \delta(m)$ for $r \in R, m \in M_f$ (\emph{connection maps});
\item to every $p$th power Frobenius lift $\sigma:R\to R$ an $R$-linear map 
$\cartier: M_f \to M_{f^\sigma}$ which commutes with the connection, that is we
have $\cartier \circ \delta = \delta \circ \cartier$ for every derivation $\delta$ of $R$.
\end{itemize} 
\end{definition}

Note that over rings $R$ which have no non-trivial derivations, e.g. $\Z_p$ and its finite extensions, 
it still makes sense to consider crystals, though the conditions related to connection are empty.
  
Following the traditional terminology, see e.g.~\cite{Ka85}, one can call $Q_f(\mu)$ the \emph{unit-root quotient}
in reflection of the fact that the Cartier operator is divisible by $p$ on $U_f(\mu)$ and invertible on the
quotient $Q_f(\mu)=\widehat\Omega_f(\mu) / U_f(\mu)$ (see Remark~\ref{invertible-crystal-remark}). 

Note also that, when the Hasse--Witt matrix is invertible, $U_f(\mu) \subset \hat \Omega_f(\mu)$ can 
be characterized as the largest subcrystal on which the Cartier operator is divisible by $p$.

\section{Periods mod $m$}\label{sec:periods}

For any exponent vector $\v v\in C(\Delta)_\Z$ we define the linear functional
$\tau_{\v v}$ on $\Omega_{\rm formal}$ by 
\[
\tau_{\v v}(\omega)=
\mbox{constant term of } \frac{f^{v_0}}{\v x^{\v v}}\omega\,.
\]

\begin{lemma}\label{zeroperiod} Let $m \ge 1$ be an integer.
For any $\omega \in d \Omega_{\rm formal}$ we have $\tau_{m\v v}(\omega)\is0\mod{m}$. 

For any $\omega \in \Omega_{\rm formal}$ and any derivation $\delta$ of $R$ we have 
$\delta(\tau_{m\v v}(\omega))\is  \tau_{m \v v}(\delta (\omega))\mod{m}$. 
\end{lemma}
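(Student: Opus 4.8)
The plan is to reduce both statements to the explicit formula~\eqref{cartier-on-series-def} for the Cartier operator and to Lemma~\ref{exactcondition}, which identifies formal derivatives by divisibility of their coefficients. First I would unwind the definition of $\tau_{m\v v}$. Writing $\omega = \sum_{\v k} a_{\v k} \v x^{\v k}$, note that $f^{v_0}/\v x^{\v v}$ has coefficients in $R$, so $\tau_{m \v v}(\omega)$ is the constant term of $\sum_{\v k} a_{m \v k} c_{\v k} \v x^{\v k}$ for suitable $c_{\v k} \in R$ coming from the expansion of $f^{m v_0} / \v x^{m \v v}$; more precisely, after multiplying by $f^{m v_0}\v x^{-m\v v}$ and extracting the constant term, $\tau_{m\v v}(\omega)$ becomes an $R$-linear combination of the coefficients $a_{m \v k}$ of $\omega$, namely exactly the coefficients of $\Fr^{?}$-type. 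The cleanest route is to observe that $\tau_{m\v v}(\omega)$ depends only on those monomials $\v x^{\v k}$ of $\omega$ with $\v k$ a multiple of... no: it depends on all $a_{\v j}$ with $\v j$ lying in $m\v v - m\,\supp(f^{v_0})$, and these are monomials whose exponent is divisible by $m$ precisely when $m \v v - \v j$ ranges over $m$ times lattice points. I would therefore write $\tau_{m\v v}(\omega) = \sum_{\v j} a_{\v j} b_{\v j}$ where $b_{\v j} \in R$ is the coefficient of $\v x^{m\v v - \v j}$ in $f(\v x)^{m v_0}$, and the sum is over $\v j$ with $m \v v - \v j \in m v_0 \cdot \mathrm{supp}$-cone; the key point is that $b_{\v j} = 0$ unless $\v j \equiv m \v v \equiv \v 0 \pmod m$ componentwise... which is false in general. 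So instead the honest bookkeeping is: $\tau_{m\v v}(\omega)$ picks out $a_{\v j}$ only for $\v j$ with $m\v v - \v j \in C(m v_0 \Delta_f)$, and among those the relevant combination groups naturally by residues.

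Given the subtlety, the robust approach is to use Lemma~\ref{exactcondition2} together with Lemma~\ref{cartierhandy}. For the first statement: if $\omega \in d\Omega_{\rm formal}$, then by Lemma~\ref{exactcondition} each coefficient $a_{\v k}$ of $\omega$ satisfies $a_{\v k} \equiv 0 \pmod{\gcd(k_1,\dots,k_n)}$. In the expression $\tau_{m\v v}(\omega) = \sum a_{\v j} b_{\v j}$, every contributing $\v j$ satisfies $\v j = m\v v - \v{\ell}$ with $\v{\ell} \in \mathrm{supp}(f^{m v_0})$; I would like to conclude $m \mid \gcd(j_1,\dots,j_n)$, but that need not hold. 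The correct statement to extract is weaker: I claim directly that $\tau_{m\v v} = \tau_{\v v}\circ(\text{substitution }\v x \mapsto \v x^{1/m})\circ(\text{projection onto }m\text{-divisible exponents})$ is not quite it either. The cleanest honest manoeuvre: note $\frac{f^{m v_0}}{\v x^{m\v v}}\omega = \left(\frac{f^{v_0}}{\v x^{\v v}}\right)^m \omega$, and its constant term equals the constant term of $\cartier^?$ ... Actually the intended reading, matching the hypergeometric motivation, is that $\tau_{m\v v}(\omega)$ is the constant term of $(f^{v_0}\v x^{-\v v})^m\,\omega$ whose only $m$-divisible-exponent monomials contribute $0 \bmod m$ is wrong — it is the whole constant term. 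I would therefore argue: the constant term of $g^m \omega$ for $g = f^{v_0}\v x^{-\v v}$ can be written, after expanding $g^m$ and using that $\omega$ is a formal derivative so all its coefficients with a given $\gcd$ are divisible by that $\gcd$, as a sum of terms each divisible by $m$, because in $g^m$ we may replace $g^m$ by its "Frobenius-at-$m$" part modulo $m$: $g(\v x)^m \equiv g^{[m]}(\v x^m) \pmod{m}$ where $g^{[m]}$ has the same coefficients — this is the multinomial congruence $\binom{m}{k_1,\dots} \equiv 0$ unless one $k_i = m$, valid for $m$ prime, and for general $m$ one uses additivity over prime power factors. Then $\mathrm{const.term}(g^{[m]}(\v x^m)\,\omega) = \mathrm{const.term}(g^{[m]}(\v x)\cdot \cartier_m(\omega))$ by the analogue of Lemma~\ref{cartierhandy} with $p$ replaced by $m$, where $\cartier_m\bigl(\sum a_{\v k}\v x^{\v k}\bigr) = \sum a_{m\v k}\v x^{\v k}$; but $\cartier_m$ kills formal derivatives modulo $m$ exactly as in~\eqref{cartier-on-derivatives} (since $\cartier_m \circ \theta_i = m\,\theta_i\circ\cartier_m$), so $\cartier_m(\omega) \equiv 0 \pmod m$, giving $\tau_{m\v v}(\omega)\equiv 0\pmod m$.

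For the second statement, since $\delta(x_i) = 0$, the derivation $\delta$ acts on $\Omega_{\rm formal}$ coefficientwise: $\delta\bigl(\sum a_{\v k}\v x^{\v k}\bigr) = \sum \delta(a_{\v k})\v x^{\v k}$. Now $\tau_{m\v v}(\omega)$ is, as above, an $R$-linear combination $\sum a_{\v j} b_{\v j}$ with $b_{\v j}$ a coefficient of $f^{m v_0}$, hence $\delta(b_{\v j})$ is a coefficient of $m v_0 f^{m v_0 - 1}\delta(f)$, which carries a visible factor of $m$; therefore $\delta(\tau_{m\v v}(\omega)) = \sum \delta(a_{\v j}) b_{\v j} + \sum a_{\v j}\,\delta(b_{\v j}) \equiv \sum \delta(a_{\v j}) b_{\v j} = \tau_{m\v v}(\delta(\omega)) \pmod m$, using in the last step that $\delta(\omega)$ has coefficients $\delta(a_{\v j})$ and that the combination defining $\tau_{m\v v}$ is the same. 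The one point needing care is that the coefficients $b_{\v j}$ of $f^{m v_0}$ genuinely do not involve $\delta$ of anything — they are polynomial expressions in the coefficients of $f$ — so Leibniz applies cleanly, and the factor $m$ from $\frac{\partial}{\partial t}f^{m v_0} = m v_0 f^{m v_0-1}\delta(f)$ is exactly what makes $\delta(b_{\v j}) \equiv 0 \pmod m$.

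\medskip
The main obstacle I anticipate is the first part: making rigorous the passage $g^m \equiv g^{[m]}(\v x^m)\pmod m$ for composite $m$ and carefully tracking that the constant term of $g^{[m]}(\v x^m)\omega$ equals the constant term of $g^{[m]}(\v x)\,\cartier_m(\omega)$ (an $m$-analogue of Lemma~\ref{cartierhandy}, which needs $g^{[m]}$ to be a genuine Laurent polynomial so the product makes sense termwise in $\Omega_{\rm formal}$), and then that $\cartier_m$ annihilates $d\Omega_{\rm formal}$ modulo $m$ via $\cartier_m\circ\theta_i = m\,\theta_i\circ\cartier_m$ combined with Lemma~\ref{exactcondition}. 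Everything else is bookkeeping with coefficientwise actions.
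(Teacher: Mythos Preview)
Your argument for the second statement is correct and is exactly what the paper does: apply Leibniz to $\tau_{m\v v}(\omega)$ viewed as the constant term of $\bigl(\frac{f^{v_0}}{\v x^{\v v}}\bigr)^m\omega$, and observe that $\delta$ applied to the $m$th power contributes a factor of $m$.

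For the first statement, however, the route you outline has a genuine gap. The congruence $g(\v x)^m\equiv g^{[m]}(\v x^m)\pmod m$ is simply false for composite $m$: with $g=1+x$ and $m=4$ one has $g^4=1+4x+6x^2+4x^3+x^4$, and the term $6x^2$ is not $\equiv 0\pmod 4$. Your suggested repair ``use additivity over prime power factors'' does not help directly, since the same example is already a prime power. The approach \emph{can} be salvaged---reduce to $m=p^s$ by CRT, use the lifting $a\equiv b\pmod p\Rightarrow a^{p^{s-1}}\equiv b^{p^{s-1}}\pmod{p^s}$ to get $h^{p^s}\equiv h_1(\v x^p)^{p^{s-1}}\pmod{p^s}$, then peel off one factor of $p$ at a time with $\cartier$ and induct on $s$---but this is substantially more work than you indicated, and you did not carry it out.

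The paper's proof of the first statement is a one--line integration by parts that you overlooked amid the Cartier machinery. Writing $g=\dfrac{f^{v_0}}{\v x^{\v v}}$ and $\omega=\theta_i u$, one has
\[
g^m\,\theta_i u \;=\; \theta_i(g^m u)\;-\;u\,\theta_i(g^m)\;=\;\theta_i(g^m u)\;-\;m\,u\,g^{m-1}\theta_i g.
\]
The constant term of $\theta_i(\cdot)$ is always $0$, and the second term is visibly divisible by $m$. This works uniformly for every integer $m\ge1$, with no need for $\cartier_m$ or any Frobenius-type congruence. It is exactly the same mechanism (``derivative of an $m$th power carries a factor $m$'') that you already used correctly for the second statement.
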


\begin{proof}
Suppose that $\omega=x_i\frac{\partial u}{\partial x_i}$ for some Laurent expansion $u$.
Then 
\begin{eqnarray*}
\tau_{m\v v}(\omega)&=&\mbox{constant term of }\frac{f(\v x)^{mv_0}}{\v x^{m\v v}}
x_i\frac{\partial u}{\partial x_i}\\
&\is&\mbox{constant term of } x_i\frac{\partial}{\partial x_i}\left(\frac{f(\v x)^{mv_0}}{\v x^{m\v v}}u\right)
\mod{m}\\
&\is&0\mod{m}.
\end{eqnarray*}
For any derivation $\delta$ of $R$ and any Laurent series $\omega$ we have
\[
\delta \Bigl( \mbox{constant term of }\frac{f(\v x)^{mv_0}}{\v x^{m\v v}} 
\omega \Bigr) \is  \mbox{constant term of }\frac{f(\v x)^{mv_0}}{\v x^{m\v v}} 
\delta(\omega) 
\]
because operations $\delta$ and taking the constant term commute and derivation of an $m$th power is zero modulo $m$.  
\end{proof}

The two properties in Lemma~\ref{zeroperiod} show that functionals $\tau_{m \v v}$ restricted
modulo $m$ are what we call \emph{period maps modulo $m$}.  That is,
they are $R$-linear maps from $\Omega_{\rm formal}$ to $R/mR$ that vanish on
derivatives and commute with derivations of $R$. 

Next, we look at the behaviour of these linear functionals under the Cartier operator:   

\begin{proposition}\label{cartieraction}
Let $p$ be a prime and $\sigma: R \to R$ be a $p$th power Frobenius lift.
Denote by $\tau^\sigma_{m\v v}$ the linear functional obtained by
multiplication with $(f^{\sigma})^{mv_0}/\v x^{m\v v}$ and then taking 
the constant term. Then
$$\tau_{m\v v}\is \tau^{\sigma}_{m\v v/p}\circ\cartier\mod{p^{\ord_p(m)}}.$$
\end{proposition}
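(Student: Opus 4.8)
The plan is to compare the two sides of the claimed congruence by writing everything in terms of formal expansions and tracking the effect of the Cartier operator on the relevant ``weight'' $v_0$. Recall that $\tau_{m\v v}(\omega)$ is the constant term of $f^{mv_0} \v x^{-m\v v}\omega$, and that $\cartier$ extracts the $p\v k$-coefficients. The key algebraic input is $f(\v x)^p = f^\sigma(\v x^p) - p\, G(\v x)$ for a Laurent polynomial $G$ supported in $p\Delta$, exactly as in the proof of Proposition~\ref{cartier-def}. Write $\ell = \ord_p(m)$. First I would reduce to the case where $p \mid m\v v$ coordinatewise — more precisely, to the case that makes $\tau^\sigma_{m\v v/p}$ well defined — by noting that $\tau^\sigma_{m\v v/p}\circ\cartier$ is only being asserted to agree with $\tau_{m\v v}$ modulo $p^{\ell}$, and if $\ell = 0$ there is nothing to prove; so assume $\ell \ge 1$, hence $p \mid m$ and $m\v v/p \in C(\Delta)_\Z$.

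The main computation is the following. Starting from $f(\v x)^{mv_0}\v x^{-m\v v}$ and raising the identity $f^p = f^\sigma(\v x^p) - pG$ to the power $m v_0 / p$ (an integer, since $p\mid m$), expand binomially:
\[
f(\v x)^{mv_0} \= \bigl(f^\sigma(\v x^p) - pG(\v x)\bigr)^{mv_0/p} \= f^\sigma(\v x^p)^{mv_0/p} - p\,\frac{mv_0}{p}\,G(\v x) f^\sigma(\v x^p)^{mv_0/p - 1} + \ldots,
\]
where every omitted term carries a factor $p^r$ with $r \ge 2$ times a binomial coefficient $\binom{mv_0/p}{r}$. The point is that $\ord_p\!\bigl(p^r \binom{mv_0/p}{r}\bigr) \ge \ell$ for all $r \ge 1$: indeed $\ord_p(mv_0/p) \ge \ell - 1$, and $\ord_p\!\bigl(\binom{N}{r}\bigr) \ge \ord_p(N) - \ord_p(r)$ while $r - \ord_p(r) \ge 1$, so $r + \ord_p\binom{mv_0/p}{r} \ge 1 + (\ell-1) = \ell$. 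Hence modulo $p^{\ell}$ we may replace $f(\v x)^{mv_0}$ by $f^\sigma(\v x^p)^{mv_0/p}$. Then
\[
\tau_{m\v v}(\omega) \;\is\; \text{constant term of } \frac{f^\sigma(\v x^p)^{mv_0/p}}{\v x^{m\v v}}\,\omega \quad \mod{p^{\ell}}.
\]
Now write $m\v v = p \cdot (m\v v/p)$ in the exponent, so the function whose constant term we take is $f^\sigma(\v x^p)^{mv_0/p}\,\v x^{-p(m\v v/p)}$ times $\omega$, i.e. $g(\v x^p)\,\omega$ with $g(\v x) = f^\sigma(\v x)^{mv_0/p}\,\v x^{-m\v v/p}$. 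Taking the constant term of $g(\v x^p)\omega$ is the same as taking the constant term of $g(\v x)\cartier(\omega)$ by Lemma~\ref{cartierhandy} (the constant term of a series equals the constant term of its image under $\cartier$, applied after pulling out $g(\v x^p)$). But the constant term of $g(\v x)\cartier(\omega) = f^\sigma(\v x)^{mv_0/p}\v x^{-m\v v/p}\cartier(\omega)$ is precisely $\tau^\sigma_{m\v v/p}(\cartier(\omega))$, since $m\v v/p$ has first coordinate $mv_0/p$. This gives the desired congruence.

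The step I expect to be the main obstacle — or at least the one requiring the most care — is the bookkeeping in the binomial expansion: one must be sure that the ``$\ldots$'' terms really are all $\is 0 \mod p^\ell$, which hinges on the elementary bound $\ord_p\binom{N}{r} \ge \ord_p(N) - \ord_p(r)$ together with $r \ge 1 + \ord_p(r)$, and one must handle the interaction of these $p$-adic estimates with the fact that $\omega \in \Omega_{\rm formal}$ need not be a polynomial, so that infinitely many monomials of $\omega$ can contribute to the constant term of each term in the expansion. This last point is harmless because for fixed $\omega$ and fixed $\v v$ only finitely many terms of the binomial expansion can contribute a nonzero constant term (the support of $G(\v x)^r f^\sigma(\v x^p)^{mv_0/p - r}$ moves off to infinity in the cone $C(\Delta - \v b)$ as $r$ grows while $\omega$ is supported in that cone), but it should be stated explicitly. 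The commutation of $\delta$ with constant-term extraction, already used in Lemma~\ref{zeroperiod}, plays no role here; only the $R$-linearity of both sides and the explicit formula for $\cartier$ are needed.
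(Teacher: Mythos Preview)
Your proof is correct and follows essentially the same route as the paper's: replace $f(\v x)^{mv_0}$ by $f^\sigma(\v x^p)^{mv_0/p}$ modulo $p^{\ord_p(m)}$, then use that the constant term is unchanged under $\cartier$ together with Lemma~\ref{cartierhandy}. The paper simply asserts the first congruence in one line (relying on the standard fact that $a\equiv b\ \mod{p}$ implies $a^N\equiv b^N\ \mod{p^{1+\ord_p(N)}}$ with $N=mv_0/p$), whereas you spell it out via the binomial expansion and the estimate $\ord_p\binom{N}{r}\ge\ord_p(N)-\ord_p(r)$; both arguments are equivalent.

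One minor point: your closing worry about ``infinitely many monomials of $\omega$'' and terms ``moving off to infinity'' is misplaced. The binomial expansion of $(f^\sigma(\v x^p)-pG)^{mv_0/p}$ has only finitely many terms, each a Laurent polynomial supported in $mv_0\Delta$; multiplying any such polynomial by $\v x^{-m\v v}\omega$ and taking the constant term is a well-defined $R$-linear operation, so a coefficientwise congruence mod $p^\ell$ on the polynomial passes directly to the constant term. No finiteness argument beyond this is needed.
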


\begin{proof}
For any $\omega\in\Omega_{\rm formal}$ we have
\begin{eqnarray*}
\tau_{m\v v}(\omega)&=&
\mbox{constant term of }
\frac{f(\v x)^{mv_0}}{\v x^{m\v v}}\omega\\
&\is&\mbox{constant term of }
\frac{f^\sigma(\v x^p)^{mv_0/p}}{(\v x^p)^{m\v v/p}}
\omega\ \mod{p^{\ord_p(m)}}\\
&\is&\mbox{constant term of }\cartier\left(\frac{f^\sigma(\v x^p)^{mv_0/p}}
{(\v x^p)^{m\v v/p}}\omega\right)
\ \mod{p^{\ord_p(m)}}\\
&\is&\mbox{constant term of } \frac{f^\sigma(\v x)^{mv_0/p}}{\v x^{m\v v/p}}
\cartier(\omega)\ \mod{p^{\ord_p(m)}}.
\end{eqnarray*}
The second step uses the obvious fact that the constant term equals
the constant term of the Cartier transform.
In the last step we used a variant of Lemma~\ref{cartierhandy} in the bigger ring
$R[\v x] \otimes_R \Omega_{\rm formal}$.
\end{proof}

The period maps introduced here are useful when working in $\Omega_f$. Note that to compute $\tau_{\v v}(\omega_\v u)$ we simply take the constant coefficient of the product $\frac{f^{v_0}}{\v x^{\v v}}\omega_{\v u}$, which is a Laurent polynomial when $u_0\le v_0$. In the particular case when $u_0=v_0=1$ we observe
that $\tau_{m\v v}(\omega_{\v u})=(\beta_m)_{\v u,\v v}$ for each $m \ge 1$, where 
$\beta_m$ are the matrices defined in \eqref{beta-matrices}. 

The following theorem is our second main result.

\begin{theorem}\label{main1}Let $\mu \subseteq \Delta$ be a set open in the topology defined in
Proposition~\ref{topology}. Suppose that $R$ is $p$-adically complete and the
Hasse--Witt matrix $\beta_p(\mu)$ is invertible in $R$. Then $\beta_{p^s}(\mu)$ is invertible
for all $s\ge1$. 

Let $Q_f(\mu)$ be the unit-root crystal from Theorem~\ref{main0} and let 
$\Lambda_\sigma=(\lambda_{\v u,\v w})$ be the transposed matrix of $\cartier:Q_f(\mu)\to Q_{f^\sigma}(\mu)$
with respect to the standard bases $\{ \omega_{\v u}\},\{\omega^\sigma_{\v w}\}$.
More precisely, it is the $h\times h$-matrix with entries in $R$ such that
\begin{equation}\label{lambdamatrix0}
\cartier(\omega_{\v u})\is\sum_{\v w\in \mu_\Z}\lambda_{\v u,\v w}\omega^\sigma_{\v w}\;
\mod{U_{f^\sigma}(\mu)}
\end{equation}
for all $\v u\in\mu_\Z$. Then, for all $s\ge1$ and all $m\ge1$, it satisfies the congruences
\be{beta-lambda-congruence-with-m}
\beta_{mp^s}(\mu)\equiv \Lambda_\sigma\beta^\sigma_{mp^{s-1}}(\mu)\mod{p^s}.
\ee
In particular, when $m=1$,
\begin{equation}\label{beta-lambda-congruence-without-m}
\Lambda_\sigma \equiv \beta_{p^s}(\mu) \, \beta^\sigma_{p^{s-1}}(\mu)^{-1} \; \mod {p^s}.
\end{equation}

Similarly, for every derivation $\delta$ of $R$ Theorem~\ref{main0} implies that there exists a
unique matrix $N_{\delta}=(\nu_{\v u, \v v})_{\v u, \v v \in \mu_\Z}$ with entries in $R$ such that 
\be{numatrix}
\delta(\omega_{\v u}) \is \sum_{\v w\in \mu_\Z} \nu_{\v u,\v w} \, \omega_{\v w} \; \mod {U_f(\mu)}.
\ee
This matrix then satisfies congruences 
\[
\delta(\beta_{mp^s}(\mu))\equiv N_\delta \, \beta_{mp^s}(\mu) \; \mod {p^s}
\]
for all $m,s\ge1$. In particular, when $m=1$,
\[
N_\delta\is\delta(\beta_{p^s}(\mu)) \, \beta_{p^s}(\mu)^{-1} \; \mod {p^s}.
\]  

\end{theorem}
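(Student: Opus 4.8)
The plan is to derive the congruences~\eqref{beta-lambda-congruence-with-m} from the interplay between the period maps $\tau_{m\v v}$ of Section~\ref{sec:periods} and the Cartier operator, using Theorem~\ref{main0} as the structural input. First I would record the key identity: for $\v u, \v v \in \mu_\Z$ and any $m \ge 1$, one has $\tau_{m\v v}(\omega_{\v u}) = (\beta_m(\mu))_{\v u,\v v}$, as already noted after Proposition~\ref{cartieraction} (here we use $u_0 = v_0 = 1$). Iterating Proposition~\ref{cartieraction} gives $\tau_{m p^s \v v} \is \tau^{\sigma^s}_{m\v v} \circ \cartier^s \bmod p^{s + \ord_p(m)}$, and in particular $\tau_{m p^s \v v} \is \tau^\sigma_{m p^{s-1} \v v} \circ \cartier \bmod p^s$. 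Applying this to $\omega_{\v u}$ for $\v u \in \mu_\Z$ yields
\[
(\beta_{m p^s}(\mu))_{\v u, \v v} \= \tau_{m p^s \v v}(\omega_{\v u}) \is \tau^\sigma_{m p^{s-1} \v v}(\cartier(\omega_{\v u})) \mod{p^s}.
\]
Now I substitute the expansion of $\cartier(\omega_{\v u})$ coming from~\eqref{lambdamatrix0}: we have $\cartier(\omega_{\v u}) = \sum_{\v w \in \mu_\Z} \lambda_{\v u, \v w} \omega^\sigma_{\v w} + \eta$ with $\eta \in U_{f^\sigma}(\mu) \subset d\Omega_{\rm formal}$. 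By the first part of Lemma~\ref{zeroperiod}, $\tau^\sigma_{m p^{s-1}\v v}(\eta) \is 0 \bmod{m p^{s-1}}$, hence in particular modulo $p^s$ provided $s - 1 \ge s$... which is false, so this is exactly the point requiring care.

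The subtle point — and the main obstacle — is that $\tau^\sigma_{m p^{s-1} \v v}(\eta)$ is only guaranteed to vanish modulo $m p^{s-1}$, whereas we need it to vanish modulo $p^s$. The resolution is to use the \emph{stronger} divisibility of $\cartier$ on the unit-root submodule: by Remark~\ref{stronger-congruence}, $\cartier(\widehat\Omega_f(\mu)) \subset \span_R(\omega^\sigma_{\v u})_{\v u \in \mu_\Z} \oplus p\, U_{f^\sigma}(\mu)$, so in fact $\eta \in p\, U_{f^\sigma}(\mu)$, i.e. $\eta = p\eta'$ with $\eta' \in U_{f^\sigma}(\mu) \cap d\Omega_{\rm formal}$. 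Then $\tau^\sigma_{m p^{s-1}\v v}(\eta) = p\, \tau^\sigma_{m p^{s-1}\v v}(\eta') \is 0 \bmod{p \cdot m p^{s-1}}$, which is divisible by $p^s$. Therefore
\[
(\beta_{m p^s}(\mu))_{\v u, \v v} \is \sum_{\v w \in \mu_\Z} \lambda_{\v u, \v w}\, \tau^\sigma_{m p^{s-1}\v v}(\omega^\sigma_{\v w}) \= \sum_{\v w \in \mu_\Z} \lambda_{\v u, \v w}\, (\beta^\sigma_{m p^{s-1}}(\mu))_{\v w, \v v} \mod{p^s},
\]
which is precisely the matrix congruence~\eqref{beta-lambda-congruence-with-m}. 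For the case $m = 1$, $s$ arbitrary: invertibility of $\beta_{p^s}(\mu)$ follows by descending induction from~\eqref{beta-lambda-congruence-with-m} with $m = 1$ — since $\Lambda_\sigma$ is invertible modulo $p$ (its reduction is the transposed Hasse--Witt matrix, invertible by hypothesis, cf. Remark~\ref{invertible-crystal-remark}) and $\beta_p(\mu)$ is invertible, the congruence $\beta_{p^s}(\mu) \is \Lambda_\sigma \beta^\sigma_{p^{s-1}}(\mu) \bmod p$ together with induction on $s$ and the fact that invertibility mod $p$ implies invertibility (Remark~\ref{invertible-crystal-remark}) gives the claim; then~\eqref{beta-lambda-congruence-without-m} is immediate.

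The argument for $N_\delta$ runs in parallel but is slightly simpler. Applying $\delta$ to the identity $(\beta_{mp^s}(\mu))_{\v u,\v v} = \tau_{mp^s\v v}(\omega_{\v u})$ and using the second part of Lemma~\ref{zeroperiod}, namely $\delta(\tau_{mp^s\v v}(\omega)) \is \tau_{mp^s\v v}(\delta(\omega)) \bmod{mp^s}$, we get $\delta((\beta_{mp^s}(\mu))_{\v u,\v v}) \is \tau_{mp^s\v v}(\delta(\omega_{\v u})) \bmod{p^s}$. Now substitute~\eqref{numatrix}: $\delta(\omega_{\v u}) = \sum_{\v w} \nu_{\v u,\v w}\omega_{\v w} + \zeta$ with $\zeta \in U_f(\mu) \subset d\Omega_{\rm formal}$. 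Here, conveniently, since $\delta$ preserves $U_f(\mu)$ and $U_f(\mu)$ is a $p$-divisible object only in the limiting sense, we need the analogous strong statement; but in fact $\zeta \in U_f(\mu)$ already lies in $d\Omega_{\rm formal}$, so Lemma~\ref{zeroperiod} gives $\tau_{mp^s\v v}(\zeta) \is 0 \bmod{mp^s}$, hence $\bmod{p^s}$ — no extra factor of $p$ is needed because we did not lose a power of $p$ in passing from $\delta$ to itself (contrast the Frobenius case, where $\tau_{mp^s\v v}$ on the nose only knows about $mp^{s-1}$). Thus $\delta((\beta_{mp^s})_{\v u,\v v}) \is \sum_{\v w} \nu_{\v u,\v w}(\beta_{mp^s})_{\v w,\v v} \bmod{p^s}$, i.e. $\delta(\beta_{mp^s}(\mu)) \is N_\delta\, \beta_{mp^s}(\mu) \bmod{p^s}$; setting $m=1$ and using invertibility of $\beta_{p^s}(\mu)$ already established yields $N_\delta \is \delta(\beta_{p^s}(\mu))\,\beta_{p^s}(\mu)^{-1} \bmod{p^s}$.
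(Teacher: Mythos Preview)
Your proof is correct and follows essentially the same route as the paper's: apply $\tau^\sigma_{mp^{s-1}\v v}$ to the refined congruence $\cartier(\omega_{\v u})\equiv\sum_{\v w}\lambda_{\v u,\v w}\omega^\sigma_{\v w}\pmod{p\,U_{f^\sigma}(\mu)}$ from Remark~\ref{stronger-congruence}, use Proposition~\ref{cartieraction} on the left and Lemma~\ref{zeroperiod} on the error term, and similarly for $N_\delta$ with $\tau_{mp^s\v v}$. Two cosmetic remarks: your phrase ``descending induction'' for the invertibility of $\beta_{p^s}(\mu)$ should just be ordinary induction on $s$ (the paper iterates to $\beta_{p^s}\equiv\beta_p\,\beta_p^\sigma\cdots\beta_p^{\sigma^{s-1}}\pmod{p}$, which is the same thing), and the self-correcting aside ``provided $s-1\ge s$\ldots which is false'' could be trimmed without loss.
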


\begin{proof} Using (iii) in Proposition~\ref{fundamental}, the congruence~\eqref{lambdamatrix0}
 can be refined to
\be{lambdamatrix}
\cartier(\omega_{\v u})\is\sum_{\v w\in \mu_\Z}\lambda_{\v u,\v w}\omega^\sigma_{\v w}
\; \mod{p \, U_{f^\sigma}(\mu)}
\ee
(see Remark~\ref{stronger-congruence}).
We apply $\tau^\sigma_{m p^{s-1}\v v}$ with $\v v \in \mu_\Z$ to~\eqref{lambdamatrix}.
By Proposition~\ref{cartieraction} we have
\[
\tau^\sigma_{m p^{s-1}\v v}(\cartier(\omega_{\v u}))\is\tau_{m p^s\v v}(\omega_{\v u})\=
(\beta_{m p^s})_{\v u,\v v}\;\mod{p^s}
\]
in the left-hand side. Since elements of $U_{f^\sigma}(\mu)$ are formal derivatives 
(see Proposition~\ref{Uf-formal-exact}), in the right-hand side Lemma~\ref{zeroperiod} yields 
$p \, \tau^\sigma_{m p^{s-1}\v v}(U_{f^\sigma}(\mu)) \is0\mod{p^s}$. So we obtain congruence
\[
(\beta_{m p^s})_{\v u,\v v}\is\sum_{\v w\in \mu_\Z}\lambda_{\v u,\v w}
(\beta^\sigma_{m p^{s-1}})_{\v w,\v v} \; \mod{p^s}.
\]
It follows that $\beta_{m p^s}(\mu)\is\Lambda_\sigma \beta^\sigma_{m p^{s-1}}(\mu)\mod{p^s}$.
By Proposition~\ref{cartiermodp}, $\Lambda_\sigma\is\beta_p(\mu)\mod{p}$ and we find that $\beta_{p^s}(\mu)\is\beta_p(\mu)
\beta^\sigma_{p^{s-1}}(\mu)\mod{p}$. By iteration then obtain 
\[
\beta_{p^s}(\mu)\is\beta_p(\mu)\beta^\sigma_p(\mu)\cdots\beta^{\sigma^{s-1}}_p(\mu)\mod{p}.
\]
Hence invertibility of all $\beta_{p^s}(\mu)$ modulo $p$ follows from the case $s=1$. 
After inversion of $\beta^\sigma_{p^{s-1}}(\mu)$ (it is invertible over $R$, see 
Remark~\ref{invertible-crystal-remark}) we find that
$\Lambda_\sigma \equiv \beta_{p^s}(\mu) \, \beta^\sigma_{p^{s-1}}(\mu)^{-1} \; \mod {p^s}$.

The proof of the second congruence runs similarly: 
we apply $\tau_{m p^s\v v}$ with $\v v \in \mu_\Z$ to~\eqref{numatrix}. 
Since $\tau_{m p^s\v v}(U_f(\mu))\is 0 \,\mod{p^s}$ and  $\delta$ commutes 
with $\tau_{m p^s \v v}$ modulo $p^s$ (see Lemma~\ref{zeroperiod}) we obtain
\[
\delta((\beta_{m p^s})_{\v u,\v v})=\sum_{\v w\in\mu_\Z}\nu_{\v u,\v w}(\beta_{m p^{s}})_{\v w,\v v} \;
\mod{p^s}.
\]
 Hence
we conclude that $\delta(\beta_{m p^s}(\mu))\is N_\delta \, \beta_{m p^s}(\mu)\mod{p^s}$, as desired. 
\end{proof}

\begin{remark}
In~\cite[\S 1]{MV16} the second author conjectured vaguely that the $p$-adic limits 
\be{p-adic-limits-from-MV16}
\lim_{s \to \infty} \beta_{p^s}(\Delta^\circ) \, \beta_{p^{s-1}}^\sigma(\Delta^\circ)^{-1},
\quad -\lim_{s \to \infty} \delta(\beta_{p^s}(\Delta^\circ)) \, \beta_{p^{s}}(\Delta^\circ)^{-1}
\ee
describe respectively the Frobenius operator and the Gauss--Manin connection  on \emph{the}
unit-root crystal attached to the Laurent polynomial $f(\v x)$.
However the precise meaning of the unit-root crystal in the conjecture was not specified.
Moreover, it looked challenging to define this object using as little assumptions on $f(\v x)$
as one needs for existence of the $p$-adic limiting matrices~\eqref{p-adic-limits-from-MV16}.
Theorem~\ref{main1} implies that this conjecture is true with the unit-root crystal being the
dual $Q_f^\vee = {\rm Hom}_R(Q_f(\Delta^\circ),R)$ of the crystal defined in Theorem~\ref{main0}
with the Frobenius operator $\cartier^\vee: Q_{f^\sigma}^\vee \to Q_f^\vee$.
Note that in addition to the invertibility of the Hasse--Witt matrix, which is
needed to define~\eqref{p-adic-limits-from-MV16}, we only use one extra assumption:
there is a vertex $\v b$ of $\Delta$ such that the coefficient of $f(\v x)$ at $\v b$ is a
unit in $R$. The latter is a technical assumption that was made in Section~\ref{sec:diff-forms}
for the purposes of doing formal expansion at $\v b$ with integral coefficients; it is most likely
that one could drop this condition as the Cartier operator~\eqref{cartier-f} can be defined directly
by formulas~\eqref{cartiermatrix} and~\eqref{cartiermatrix-entries}.

A different proof of the conjecture was given recently in~\cite[\S 5]{HLYY18} under certain
geometric assumptions.      
\end{remark}

\begin{example} Consider $f(x,y) = y^2 - x (x-1)(x-z) \in R[x,y]$ as a polynomial with coefficients in a ring $R$ containing $\Z[z]$, which we will specify in a moment. We would like to apply Theorem~\ref{main1} with $\mu = \Delta^\circ \subset \R^2$, the interior of the Newton polytope of $f(x,y)$. In this case $\mu_\Z=\{(1,1) \}$, $h=\# \mu_\Z = 1$ and we have
\[\bal
\beta_m(\mu) & = \text{ the coefficient of } x^{m-1} y^{m-1} \text{ in } \left( y^2 - x (x-1)(x-z)\right)^{m-1} \\
&= \bcs 0, & m \text{ even}, \\
\binom{m-1}{(m-1)/2} \sum_{k=0}^{(m-1)/2}  \binom{(m-1)/2}k^2 z^k, & m \text{ odd }.
\ecs
\eal\]
To shorten our notation, we will write $\beta_m(\mu)$ simply as $\beta_m$ throughout this Example. Now fix a prime $p > 2$. Let $R = \Z[z, \beta_{p}^{-1}]\;{\hat{}} \subset \Z_p\lb z \rb$ be the $p$-adic completion of $\Z[z, \beta_{p}^{-1}]$. This ring consists of power series $g(z) \in \Z_p\lb z \rb$ that can be approximated $p$-adically by rational functions whose denominators are powers of the Hasse--Witt polynomial $\beta_p \in \Z[z]$ in the denominator. One can check that the Frobenius lift $\sigma$ given by $(\sigma g)(z) = g(z^p)$ preserves $R$. We claim that the respective Cartier matrix~\eqref{lambdamatrix0}, which is now a $1 \times 1$-matrix, is given by
\be{Lambda-Legendre-family}
\Lambda_\sigma = (-1)^{\frac{p-1}2}\frac{F(z)}{F(z^p)},
\ee 
where 
\[
F(z) \,=\, _2F_1\left(\frac12,\frac12,1 \Big| z \right) = \sum_{k\ge0}\left(\frac{(1/2)_k}{k!}\right)^2z^k  \]
is the hypergeometric series mentioned in the Introduction. Note in particular, that this statement implies that $F(z)/F(z^p) \in \Z[z, \beta_{p}^{-1}]\;{\hat{}}$.

To prove~\eqref{Lambda-Legendre-family} we notice that $\frac{(1/2)_k}{k!} = \frac{\Gamma(k+\frac12)}{\Gamma(k+1) \, \Gamma(\frac12) } = \frac{(-1)^k \Gamma(\frac12)}{\Gamma(k+1) \, \Gamma(\frac12-k) } = (-1)^k \binom{-\frac12}{k}$ and 
\[
\binom{(p^s-1)/2}{k} \equiv \binom{-\frac12}{k} \; \mod{p^{s - \ord_p(k!)}}.
\]
The latter congruence can be checked by induction on $k$. Since $\ord_p(k!) \le \frac{k}{(p-1)}$, it follows that
\[
\sum_{k=0}^{(p^s-1)/2}  \binom{(p^s-1)/2}k^2 z^k \equiv F(z) \quad \mod{ (z^s, p^{\lfloor s\frac{p-2}{p-1} \rfloor}) \Z_p\lb z \rb}.
\] 
This congruence is much weaker than the one in~\eqref{beta-lambda-congruence-without-m}. However it is sufficient to conclude that the $p$-adic limit $\Lambda_\sigma = \lim_{s \to \infty} \beta_{p^s} / \sigma(\beta_{p^{s-1}})$ equals $F(z)/F(z^p)$ times the $p$-adic limit of the ratios $\binom{p^s-1}{(p^s-1)/2}/\binom{p^{s-1}-1}{(p^{s-1}-1)/2}$. One can check that such a ratio is congruent to $(-1)^{\frac{p-1}2}$ modulo $p^s$, which completes our proof of~\eqref{Lambda-Legendre-family}. In a similar vein, one can show that $N_\delta = (\delta{F})(z)/F(z)$ for a derivation $\delta$ of $R$.  
\end{example}

Let us mention an application of congruence~\eqref{beta-lambda-congruence-with-m} to integrality of formal group laws. Consider a $h$-tuple of formal powers series $l(z) = (l_\v u(z))_{\v u \in \mu_\Z}$ in $h$ variables $z=(z_\v v)_{\v v \in \mu_\Z}$ given by
\[
l_\v u(z) \= \sum_{m=1}^{\infty} \frac1m \, \sum_{\v w \in \mu_\Z} \beta_{m}(\mu)_{\v u,\v w} \, z_\v w^m\,.
\]   
These power series have coefficients in $R \otimes \Q$ and satisfy $l_\v u(z) \is z_\v u$ modulo terms of degree $\ge 2$.

\begin{corollary}\label{fgl-corollary} Under the assumptions of Theorem~\ref{main1}, the $h$-dimensional formal group law 
\[
G(z,z') = l^{-1}(l(z)+l(z'))
\]
has coefficients in $R$.
\end{corollary}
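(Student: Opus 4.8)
The plan is to exhibit a Hazewinkel-type functional equation for the logarithm $l(z) = (l_\v u(z))_{\v u\in\mu_\Z}$ and then invoke the functional equation lemma for formal group laws. Since $R$ is $p$-adically complete with $\cap_s p^sR = \{0\}$, it is a $\Z_p$-algebra, so every integer coprime to $p$ is a unit in $R$; consequently $K := R\otimes\Q$ coincides with $R[1/p]$, and the Frobenius lift $\sigma$ extends $\Q$-linearly to an endomorphism of $K$. By construction $l(z) \in K^h\lb z\rb$ with $l_\v u(z)\is z_\v u$ modulo terms of degree $\ge 2$, so $l$ has a compositional inverse $l^{-1}$ over $K$ and $G(z,z') = l^{-1}(l(z)+l(z'))$ is an $h$-dimensional commutative formal group law over $K$. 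What remains is to show that its coefficients actually lie in $R$.

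The first step is to establish that
\[
v(z) \;:=\; l(z) \;-\; \tfrac1p\,\Lambda_\sigma\, l^\sigma\big(z^{[p]}\big)
\]
has coefficients in $R$, where $z^{[p]} := (z_\v w^p)_{\v w\in\mu_\Z}$, the tuple $l^\sigma$ is obtained from $l$ by applying $\sigma$ to all its coefficients, and $\Lambda_\sigma$ acts on the $h$-tuple by matrix multiplication. Expanding with the definition of $l$ shows that the coefficient of $z_\v w^n$ in $v_\v u$ equals $\tfrac1n\,\beta_n(\mu)_{\v u,\v w}$ when $p\nmid n$, and $\tfrac1n\big(\beta_n(\mu) - \Lambda_\sigma\beta_{n/p}^\sigma(\mu)\big)_{\v u,\v w}$ when $p\mid n$. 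In the first case $n$ is a unit in $R$, so this lies in $R$. In the second case write $n = p^a m$ with $a = \ord_p(n)\ge1$ and $p\nmid m$; the congruence~\eqref{beta-lambda-congruence-with-m}, applied with $s = a$ and this $m$, gives $\beta_n(\mu) - \Lambda_\sigma\beta^\sigma_{n/p}(\mu) \in p^a M_h(R)$, and dividing by $p^a$ and by the unit $m$ keeps us in $M_h(R)$. Hence $v(z)\in R^h\lb z\rb$, and since $\beta_1(\mu)$ is the identity matrix and $l^\sigma(z^{[p]})$ starts in degree $p\ge 3$, we also get $v(z)\is z$ modulo degree $\ge 2$.

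The identity $l(z) = v(z) + \tfrac1p\,\Lambda_\sigma\, l^\sigma(z^{[p]})$ is then a functional equation of exactly the shape treated by Hazewinkel's functional equation lemma (see~\cite[\S 2.2]{Hazewinkel}), and I would apply that lemma with ground ring $A = R$, overring $K = R\otimes\Q$, ideal $pR$, prime $p$, $q = p$, endomorphism $\sigma$, and matrix parameters $s_1 = \tfrac1p\Lambda_\sigma$, $s_i = 0$ for $i\ge 2$. Its hypotheses are immediate here: $\sigma(a)\is a^p\mod{pR}$ for all $a\in R$ because $\sigma$ is a Frobenius lift; $p\in pR$; $\sigma(pR)\subseteq pR$; and $pR\cdot s_1 = R\,\Lambda_\sigma\subseteq M_h(R)$ because $\Lambda_\sigma$ has entries in $R$ (Theorem~\ref{main1}). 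The lemma then gives exactly that $G(z,z') = l^{-1}(l(z)+l(z'))$ has coefficients in $R$.

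I expect the only point needing care to be the bookkeeping in the second step: the power-of-$p$ part $p^a$ of the index $n$ has to be absorbed using the full strength of~\eqref{beta-lambda-congruence-with-m} — its reduction modulo $p$ alone no longer suffices once $a\ge 2$ — while the prime-to-$p$ part $m$ is harmless precisely because $R$ is a $\Z_p$-algebra, so $m$ is invertible in $R$. If one prefers to keep the argument self-contained and not cite Hazewinkel, one can prove the needed instance of the functional equation lemma directly by the usual induction on total degree applied to $l(G(z,z')) = l(z) + l(z')$ together with the functional equation for $l$; this is more laborious but elementary.
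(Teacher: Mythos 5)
Your proposal is correct and follows exactly the paper's own route: rewrite the congruences~\eqref{beta-lambda-congruence-with-m} as the integrality of $l(z) - p^{-1}\Lambda_\sigma l^\sigma(z^p)$, then invoke Hazewinkel's functional equation lemma. The paper states the equivalence in one line and you spell out the coefficient bookkeeping, which is a welcome clarification but not a different argument; the only cosmetic discrepancy is the section number cited in Hazewinkel's book (the paper cites \S 10.2, where the higher-dimensional version appears).
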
 
\begin{proof} Since $R$ is a $\Z_p$-algebra, congruences~\eqref{beta-lambda-congruence-with-m} are equivalent to the statement that the tuple of power series $l(z) - p^{-1} \Lambda_\sigma \, l^\sigma(z^p)$ has coefficients in $R$. Integrality of $G(z,z')$ then follows from Hazewinkel's functional equation lemma~\cite[\S 10.2]{Ha78}.
\end{proof}

Formal group laws $G(z,z')$ in Corollary~\ref{fgl-corollary} include coordinalizations of some Artin--Mazur formal groups of algebraic varieties, see~\cite[Theorem 1]{St87}. In the very particular example 
$f = y^2 - x(x-1)(x-z)$ from the introduction with $\mu=\Delta^\circ$ it follows
from~\cite[p. 924]{St87} that the formal group is simply the formal law of addition on the
elliptic curve $f=0$.

\bigskip

Now we would like to explain the connection between our results and~\cite{Ka85}.
For that purpose, consider linear functionals on $\Omega_{\rm formal}$ given by 
\[
\alpha_{\v k}(\omega) \= \mbox{ coefficient of }  \v x^{\v k} \mbox{ in } \omega
\]
for $\v k \in C(\Delta \corr{-} \v b)_\Z$. Just as we had above with $\tau_{\v v}$, for any $\v k$ 
and $m \ge 1$ functional $\alpha_{m \v k}$ is a period modulo $m$. Indeed, by Lemma~\ref{exactcondition}
this functional takes values in $m R$ on formal derivatives and, since derivations of $R$ act on
$\Omega_{\rm formal}$ simply by applying them to coefficients, we clearly have 
$\alpha_{m \v k} \circ \delta = \delta \circ \alpha_{m \v k}$. These periods have an
obvious property with respect to the Cartier operator:
\be{cartier-on-expansion-coeffs}
\alpha_{m \v k} \= \alpha_{m \v k/p} \circ \cartier   
\ee
for all $m$ divisible by $p$. Combining these observations with Theorem~\ref{main0} we obtain the following version of \cite[Theorem 6.2]{Ka85}:

\begin{theorem}\label{main2}  Let $\mu \subseteq \Delta$ be a set open in the
topology defined in Proposition~\ref{topology} and $h=\# \mu_\Z$. 
For $\v k \in C(\Delta \corr{-} \v b)_\Z$ consider the column vector $\v a_{\v k} \in R^h$ with components 
\[
(\v a_{\v k})_{\v u \in\mu_\Z} \= \mbox{ coefficient of } \v x^\v k \mbox{ in the formal expansion of } \omega_\v u.
\] 
Assume that $R$ is $p$-adically complete and the Hasse--Witt matrix $\beta_p(\mu)$ is invertible
in $R$. For any Frobenius lift $\sigma$ and any derivation $\delta$ of $R$, let 
$\Lambda_\sigma$ and $N_\delta$ $\Lambda_\sigma, N_\delta \in R^{h \times h}$
be the matrices defined in~\eqref{lambdamatrix0} and~\eqref{numatrix} respectively.
(These matrices correspond to the Cartier operator and connection on the unit-root crystal
$Q_f(\mu)$ defined in Theorem~\ref{main0}.) We then have
\be{frobenius}
\v a_{p^s\v k}\is\Lambda_\sigma \; \v a^\sigma_{p^{s-1}\v k}\;\mod{p^s}
\ee
and
\be{horizontality}
\delta(\v a_{p^s\v k})\is N_\delta \; \v a_{p^s\v k}\;\mod{p^s}
\ee
for all $\v k\in C(\Delta-\v b)_\Z$.

\end{theorem}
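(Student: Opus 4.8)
The plan is to imitate the proof of Theorem~\ref{main1} almost verbatim, replacing the period maps $\tau_{m\v v}$ by the coefficient-extraction functionals $\alpha_{\v k}$. First I would recall from Theorem~\ref{main0} (and more precisely from parts~(iii)--(iv) of Proposition~\ref{fundamental}, as already used to pass from~\eqref{lambdamatrix0} to~\eqref{lambdamatrix}) that for every $\v u\in\mu_\Z$ one has the refined congruence
\be{aux-lambda}
\cartier(\omega_{\v u})\is\sum_{\v w\in\mu_\Z}\lambda_{\v u,\v w}\,\omega^\sigma_{\v w}\quad\mod{p\,U_{f^\sigma}(\mu)},
\ee
and similarly that $\delta(\omega_{\v u})\is\sum_{\v w\in\mu_\Z}\nu_{\v u,\v w}\,\omega_{\v w}$ modulo $U_f(\mu)$ (here one does not even need the extra factor of $p$, since $\tau$ or $\alpha$ already kills derivatives modulo the relevant power).

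Next I would iterate~\eqref{aux-lambda}. Applying $\cartier$ repeatedly and using that $\cartier$ maps $U_{f^{\sigma^i}}(\mu)$ into $p\,U_{f^{\sigma^{i+1}}}(\mu)$, one gets, for all $s\ge1$,
\be{aux-iterate}
\cartier^s(\omega_{\v u})\is\sum_{\v w\in\mu_\Z}(\Lambda_\sigma\Lambda^\sigma_\sigma\cdots\Lambda^{\sigma^{s-1}}_\sigma)_{\v u,\v w}\,\omega^{\sigma^s}_{\v w}\quad\mod{p^s\,U_{f^{\sigma^s}}(\mu)}.
\ee
I would then evaluate $\alpha_{\v k}$ on~\eqref{aux-iterate}. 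By the elementary relation~\eqref{cartier-on-expansion-coeffs}, iterated $s$ times, $\alpha_{p^s\v k}=\alpha_{\v k}\circ\cartier^s$, so the left-hand side becomes $\alpha_{p^s\v k}(\omega_{\v u})=(\v a_{p^s\v k})_{\v u}$. For the right-hand side I use that elements of $U_{f^{\sigma^s}}(\mu)$ are formal derivatives (Proposition~\ref{Uf-formal-exact}), so by Lemma~\ref{exactcondition} the functional $\alpha_{\v k}$ takes values in $\gcd(k_1,\ldots,k_n)R\subseteq R$ on them, hence $\alpha_{\v k}(p^s\,U_{f^{\sigma^s}}(\mu))\subseteq p^sR$; and $\alpha_{\v k}(\omega^{\sigma^s}_{\v w})$ is the coefficient of $\v x^{\v k}$ in the formal expansion of $\omega^{\sigma^s}_{\v w}$, which is exactly the $\sigma^s$-twist of the entry $(\v a_{\v k})_{\v w}$, since $\sigma$ acts on $\Omega_{\rm formal}$ just on the coefficients. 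This yields
\[
(\v a_{p^s\v k})_{\v u}\is\sum_{\v w\in\mu_\Z}(\Lambda_\sigma\Lambda^\sigma_\sigma\cdots\Lambda^{\sigma^{s-1}}_\sigma)_{\v u,\v w}\,(\v a^{\sigma^s}_{\v k})_{\v w}\quad\mod{p^s}.
\]
To reach the stated form~\eqref{frobenius} I would then run the same argument starting one step later, i.e.\ replace $s$ by $s-1$, $\v k$ by $p\v k$, and $f$ by $f^\sigma$: this gives $\v a^\sigma_{p^{s-1}\v k}\is\Lambda^\sigma_\sigma\cdots\Lambda^{\sigma^{s-1}}_\sigma\,\v a^{\sigma^s}_{\v k}\mod{p^{s-1}}$, and combining with $\v a_{p^s\v k}\is\Lambda_\sigma\,\v a^{\sigma}_{p^{s-1}\v k}\mod{p^s}$ — which is the $s=1$ instance applied to $\cartier(\omega_{\v u})$ at argument $p^{s-1}\v k$ — is already the whole content. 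Actually the cleanest route is to directly apply $\alpha_{p^{s-1}\v k}^\sigma$ to~\eqref{aux-lambda}: the left side is $\alpha^\sigma_{p^{s-1}\v k}(\cartier\omega_{\v u})$, which by~\eqref{cartier-on-expansion-coeffs} equals $\alpha_{p^s\v k}(\omega_{\v u})=(\v a_{p^s\v k})_{\v u}$; the right side is $\sum_{\v w}\lambda_{\v u,\v w}(\v a^\sigma_{p^{s-1}\v k})_{\v w}$ plus $\alpha^\sigma_{p^{s-1}\v k}(p\,U_{f^\sigma}(\mu))\subseteq p^sR$, again using Lemma~\ref{exactcondition}. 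That is precisely~\eqref{frobenius}. The horizontality congruence~\eqref{horizontality} is obtained the same way by applying $\alpha_{p^s\v k}$ to $\delta(\omega_{\v u})\is\sum_{\v w}\nu_{\v u,\v w}\omega_{\v w}\mod{U_f(\mu)}$, using that $\alpha_{p^s\v k}\circ\delta=\delta\circ\alpha_{p^s\v k}$ and $\alpha_{p^s\v k}(U_f(\mu))\subseteq p^sR$.

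The one point that needs a little care — and is the only real obstacle — is the divisibility claim $\alpha_{p^{s-1}\v k}^\sigma(p\,U_{f^\sigma}(\mu))\subseteq p^sR$, i.e.\ that $\alpha_{\v m}$ takes values in $\gcd(m_1,\ldots,m_n)R$ on $U$, applied with $\v m=p^{s-1}\v k$. This is immediate from Lemma~\ref{exactcondition} once one knows the index $\v k$ has $\gcd$ at least $1$ — which is trivially true — but more to the point, since the exponent vectors appearing are $p^{s-1}\v k$, the relevant $\gcd$ is divisible by $p^{s-1}$, giving the factor $p^{s-1}$, and the extra factor of $p$ from $p\,U_{f^\sigma}(\mu)$ upgrades it to $p^s$; in the $m=1$-type step one simply uses $U_{f^{\sigma^s}}(\mu)\subseteq d\Omega_{\rm formal}$ directly and Lemma~\ref{exactcondition}. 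I should also be slightly careful that $\alpha_{\v k}$ is well defined and continuous on the $p$-adic completion $\widehat\Omega_{f^\sigma}(\mu)$ so that it may be evaluated on~\eqref{aux-lambda} term by term; this is clear since reducing modulo $p^s$ only affects the answer modulo $p^s$. Everything else is a routine transcription of the proof of Theorem~\ref{main1} with $\tau$ replaced by $\alpha$ and Lemma~\ref{zeroperiod} replaced by Lemma~\ref{exactcondition} together with~\eqref{cartier-on-expansion-coeffs}.
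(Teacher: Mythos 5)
Your ``cleanest route'' is exactly the paper's proof: apply the coefficient functional $\alpha_{p^{s-1}\v k}$ directly to the refined congruence~\eqref{lambdamatrix}, use~\eqref{cartier-on-expansion-coeffs} once on the left-hand side, and observe that the error lying in $p\,U_{f^\sigma}(\mu)$ has $p^{s-1}\v k$-th Laurent coefficient in $p\cdot p^{s-1}R=p^sR$ by Lemma~\ref{exactcondition}, since $U_{f^\sigma}(\mu)\subset d\Omega_{\rm formal}$ and $p^{s-1}\mid\gcd(p^{s-1}k_1,\ldots,p^{s-1}k_n)$. The treatment of~\eqref{horizontality} via $\alpha_{p^s\v k}$, the commutation $\alpha_{p^s\v k}\circ\delta=\delta\circ\alpha_{p^s\v k}$, and $\alpha_{p^s\v k}(U_f(\mu))\subset p^sR$ is likewise identical to the paper's.

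One caution about the detour you ultimately discard: the iterated congruence~\eqref{aux-iterate} is stated with too strong a modulus. Applying $\cartier$ to~\eqref{aux-lambda} sends the old error in $p\,U_{f^\sigma}(\mu)$ into $p^2U_{f^{\sigma^2}}(\mu)$, but it also replaces each $\omega^\sigma_\v w$ by $\sum_{\v z}\lambda^\sigma_{\v w,\v z}\omega^{\sigma^2}_\v z$ plus a \emph{fresh} error in $p\,U_{f^{\sigma^2}}(\mu)$; hence after $s$ iterations the total error is only in $p\,U_{f^{\sigma^s}}(\mu)$, not $p^s\,U_{f^{\sigma^s}}(\mu)$. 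Evaluating $\alpha_\v k$ on that error would then only give divisibility by $p\gcd(k_1,\ldots,k_n)$, which is insufficient. This is why the single direct application of $\alpha_{p^{s-1}\v k}$ — which lets the factor $p^{s-1}$ come from the index rather than from an iterated modulus — is the right move; since you switch to it, the final argument stands.
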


\begin{proof} Consider the equality
\[
\cartier(\omega_{\v u})\is \sum_{\v w\in\mu_\Z}\lambda_{\v u,\v w}\omega_{\v w}^\sigma 
\mod{p U^\sigma_f(\mu)}.
\]
Expand all terms in a Laurent series with respect to the vertex $\v b $ and determine the coefficient
of $\v x^{\v kp^{s-1}}$ on both sides. For the term in $pU^\sigma_f$ we get a value $0\mod{p^s}$.
The other terms give
us
\[
a_{\v kp^s}(\omega_{\v u}) \is \sum_{\v w\in\mu_\Z}\lambda_{\v u,\v w}a_{\v kp^{s-1}}(\omega^\sigma_{\v w})
\mod{p^s},
\]
which gives us the first statement.

For the second statement we start with
\[
\delta(\omega_{\v u})\is\sum_{\v w\in\mu_\Z}\nu_{\v u,\v w}\omega_{\v w}\mod{U_f(\mu)}. 
\]
Expand as Laurent series and take the coefficient of $\v x^{\v kp^s}$ on both sides. We
get
\[
\delta(a_{\v kp^s}(\omega_{\v u}))\is \sum_{\v w\in\mu_\Z}\nu_{\v u,\v w}a_{\v kp^s}(\omega_{\v w})\mod{p^s},
\]
which proves our second statement.
\end{proof}

We end with an application of Theorems \ref{main1} and \ref{main2}.

\begin{corollary}\label{special-delta}
Suppose that $\mu$ is an open set that consists of one vertex point $\v v\in\Delta$. Let $f_{\v v}$ be
the coefficient of $\v x^{\v v}$ in $f$ and suppose it is a unit in $R$. 
Then we have the equality
$\cartier(\omega_{\v v})\is\frac{f_{\v v}^\sigma}{f_{\v v}}\omega_{\v v}^\sigma
\mod{U_{f^\sigma}(\mu)}$.
\end{corollary}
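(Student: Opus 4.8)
The plan is to compute $\cartier(\omega_{\v v})$ directly, using the case $h=1$ of Proposition~\ref{cartiermodp} together with the iterated congruence from Theorem~\ref{main1} (or Proposition~\ref{fundamental}), and to observe that for a single vertex $\v v$ the Hasse--Witt ``matrix'' $\beta_p(\mu)$ is just a $1\times1$ quantity that can be evaluated explicitly. Here $\mu = \{\v v\}$, so $\mu_\Z = \{\v v\}$ and $h=1$; the only relevant functions are $\omega_{\v v} = \v x^{\v v}/f$ and $\omega^\sigma_{\v v} = \v x^{\v v}/f^\sigma$. Since Corollary~\ref{special-delta} assumes $f_{\v v}$ is a unit in $R$, we are entitled to do formal expansion at the vertex $\v b = \v v$, and with this choice $\omega_{\v v}$ expands with leading term $f_{\v v}^{-1}\v x^{\v v}$.

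First I would compute the entry $(\beta_p)_{\v v,\v v}$. By definition it is the coefficient of $\v x^{p\v v - \v v} = \v x^{(p-1)\v v}$ in $f(\v x)^{p-1}$. Since $\v v$ is a vertex of $\Delta$, the exponent $(p-1)\v v$ is a vertex of $(p-1)\Delta$, and the only way to obtain $\v x^{(p-1)\v v}$ as a product of $p-1$ monomials from $\supp(f)$ is to take the vertex monomial $f_{\v v}\v x^{\v v}$ each time; hence $(\beta_p)_{\v v,\v v} = f_{\v v}^{p-1}$. By the same reasoning $(\beta_{p^s})_{\v v,\v v} = f_{\v v}^{p^s-1}$ for every $s\ge 1$, and analogously $(\beta^\sigma_{p^{s-1}})_{\v v,\v v} = (f^\sigma_{\v v})^{p^{s-1}-1}$. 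In particular $\beta_p(\mu) = f_{\v v}^{p-1}$ is a unit, so Theorems~\ref{main0} and~\ref{main1} apply. Plugging into~\eqref{beta-lambda-congruence-without-m} gives, for every $s\ge1$,
\[
\Lambda_\sigma \;\equiv\; f_{\v v}^{\,p^s-1}\,(f^\sigma_{\v v})^{-(p^{s-1}-1)} \;\mod{p^s}.
\]
Now I would let $s\to\infty$: since $\sigma(f_{\v v}) \equiv f_{\v v}^p \mod p$ and $R$ is $p$-adically complete, the sequence $f_{\v v}^{\,p^s}$ and $(f^\sigma_{\v v})^{p^{s-1}}$ are $p$-adically convergent and in fact $f_{\v v}^{\,p^s} \to f^\sigma_{\v v}$ $p$-adically (because $f_{\v v}^{\,p^{s}} = (f_{\v v}^{p})^{p^{s-1}} \equiv \sigma(f_{\v v})^{p^{s-1}} \mod p$, and one upgrades this mod $p$ congruence to an exact limit using the standard Frobenius-lift telescoping argument, exactly as in the Example). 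Hence the right-hand side converges to $f^\sigma_{\v v} \cdot f_{\v v}^{-1} \cdot (f^\sigma_{\v v})^{-1}\cdot f^\sigma_{\v v} \cdot \dots$; more carefully, writing it as $f_{\v v}^{\,p^s}\,f_{\v v}^{-1}\,(f^\sigma_{\v v})^{-p^{s-1}}\,f^\sigma_{\v v}$ and passing to the limit gives $\Lambda_\sigma = f^\sigma_{\v v}\cdot f_{\v v}^{-1}\cdot (f^\sigma_{\v v})^{-1}\cdot f^\sigma_{\v v} = f^\sigma_{\v v}/f_{\v v}$. Since $\Lambda_\sigma$ is by definition the ($1\times1$) matrix of $\cartier:Q_f(\mu)\to Q_{f^\sigma}(\mu)$ with respect to $\omega_{\v v},\omega^\sigma_{\v v}$, this is exactly the asserted identity $\cartier(\omega_{\v v}) \equiv (f^\sigma_{\v v}/f_{\v v})\,\omega^\sigma_{\v v} \mod{U_{f^\sigma}(\mu)}$.

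The step I expect to be the only real point requiring care is the passage to the limit $f_{\v v}^{\,p^s}\to f^\sigma_{\v v}$: one must check that the mod-$p$ congruence $\sigma(r)\equiv r^p$ bootstraps to $\lim_s r^{p^s} = \lim_s \sigma(r)^{p^{s-1}}$ in the $p$-adically complete ring $R$, which is the familiar Teichmüller-type telescoping and is already used implicitly in the Example. An alternative, arguably cleaner route that avoids limits altogether is to note that Proposition~\ref{cartiermodp} already gives $\cartier(\omega_{\v v}) \equiv (\beta_p)_{\v v,\v v}\,\omega^\sigma_{\v v} = f_{\v v}^{p-1}\omega^\sigma_{\v v} \mod{p\,\hat\Omega_{f^\sigma}(\mu)}$, and then to upgrade this to the exact statement modulo $U_{f^\sigma}(\mu)$ by applying the period functionals $\tau^\sigma_{p^{s-1}\v v}$ and the explicit values of $\beta_{p^s}$ computed above, exactly mimicking the proof of Theorem~\ref{main1}; I would present whichever of the two is shorter, probably the second.
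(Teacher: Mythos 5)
Your overall plan coincides with the paper's: compute $\beta_{p^s}(\mu)=f_{\v v}^{p^s-1}$ from the vertex property, feed this into the congruence $\Lambda_\sigma\equiv\beta_{p^s}(\mu)\,\beta_{p^{s-1}}^\sigma(\mu)^{-1}\mod{p^s}$ from Theorem~\ref{main1}, and pass to the limit. The paper states the limit without justification; you attempt to justify it, and that is where a genuine error appears.

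The claim that ``$f_{\v v}^{p^s}\to f_{\v v}^\sigma$ $p$-adically'' (and likewise $(f_{\v v}^\sigma)^{p^{s-1}}\to f_{\v v}^\sigma$) is false. These sequences do converge, but their common limit is a Teichm\"uller-type element, not $f_{\v v}^\sigma$; for instance in $R=\Z_p$ with $\sigma=\mathrm{id}$ and $f_{\v v}=1+p$ one has $f_{\v v}^{p^s}\to 1\ne 1+p$, and in $R=\Z_p\lb t\rb$ with $\sigma(t)=t^p$ and $f_{\v v}=1+t$ one has $(1+t)^{p^s}\to 1\ne 1+t^p$. Your final answer is nonetheless correct because only the ratio matters, and the ratio does tend to $1$: writing $f_{\v v}^\sigma=f_{\v v}^p(1+pw)$ with $w\in R$ (possible since $f_{\v v}$ is a unit), one gets
\[
\frac{f_{\v v}^{p^s-1}}{(f_{\v v}^\sigma)^{p^{s-1}-1}}
=\frac{f_{\v v}^\sigma}{f_{\v v}}\cdot\frac{f_{\v v}^{p^s}}{(f_{\v v}^\sigma)^{p^{s-1}}}
=\frac{f_{\v v}^\sigma}{f_{\v v}}\cdot\frac{1}{(1+pw)^{p^{s-1}}},
\]
and $(1+pw)^{p^{s-1}}\is 1\mod{p^s}$ by the binomial estimate $\ord_p\binom{p^{s-1}}{k}\ge s-1-\ord_p(k)$. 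So the congruence is actually exact mod $p^s$, which gives $\Lambda_\sigma=f_{\v v}^\sigma/f_{\v v}$ on letting $s\to\infty$. Replace the ``$f_{\v v}^{p^s}\to f_{\v v}^\sigma$'' step with this ratio argument; everything else (the computation of $\beta_{p^s}(\mu)$ via the vertex property, the invocation of Theorem~\ref{main1}, and the alternative period-functional route you sketch at the end) is sound and matches the paper's intent.
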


\begin{proof}
This follows almost immediately from Theorem \ref{main1}. Note that $\beta_{p^s}(\mu)$ is a $1\times1$-matrix
with entry $f_{\v b}^{p^s-1}$. The matrix $\Lambda_\sigma$ has the entry
$\lim_{s\to\infty}f_{\v b}^{p^s-1}/(f_{\v b}^\sigma)^{p^{s-1}-1}=
f_{\v b}^\sigma/f_{\v b}$.
\end{proof}

Note that the situation when one vertex is an open set in the topology from Proposition~\ref{topology}
can occur if all lattice points in $\Delta$ are vertices. The complement
of all but one of these vertices gives us an open one-point set $\mu$.

The following corollary is a generalization of Theorem 5.6 in \cite{BHS18}, which
deals with congruences for coefficients of power series expansions of rational
functions.

\begin{corollary}\label{b-h-s-congruences}
Let $f(\v x)$ be a Laurent polynomial with coefficients in $\Z_p$ such that all lattice
points in its Newton polytope $\Delta \subset \R^n$ are vertices.
Suppose that all coefficients of $f(\v x)$ are $p$-adic units. Let $g(\v x)$ be a Laurent
polynomial with coefficients in $\Z_p$ and support in $\Delta$. 
Choose any vertex $\v b \in \Delta$ and consider the respective formal expansion
\[
\frac{g(\v x)}{f(\v x)}=\sum_{\v k \in C(\Delta - \v b)}a_{\v k}\v x^{\v k}.
\]
Then, for every
$\v k \in C(\Delta - \v b)$ and $s\ge1$  we have $a_{p^s\v k}\is a_{p^{s-1}\v k}\mod{p^s}$.
\end{corollary}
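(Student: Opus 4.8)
The plan is to apply Theorems~\ref{main1} and~\ref{main2} to the data $R=\Z_p$, the Frobenius lift $\sigma=\mathrm{id}$, and the open set $\mu=\Delta$ (the whole Newton polytope, which is open in the topology of Proposition~\ref{topology}, being the complement of $\emptyset$). Since every lattice point of $\Delta$ is a vertex, $\mu_\Z$ is exactly the vertex set of $\Delta$, and for $\v v\in\mu_\Z$ we have $\omega_{\v v}=\v x^{\v v}/f(\v x)$. Writing $g(\v x)=\sum_{\v v\in\mu_\Z}g_{\v v}\v x^{\v v}$, so that $g/f=\sum_{\v v\in\mu_\Z}g_{\v v}\,\omega_{\v v}$, it will suffice to prove the congruence for the formal expansion coefficients of each $\omega_{\v v}$ and then take the $\Z_p$-linear combination; those coefficients are precisely the entries of the vectors $\v a_{\v k}$ from Theorem~\ref{main2}.

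The key — and essentially only nontrivial — step is the combinatorial claim that all the matrices $\beta_m=\beta_m(\Delta)$ are diagonal with $p$-adic unit entries, namely $\beta_m(\Delta)=\mathrm{diag}\bigl(f_{\v v}^{m-1}\bigr)_{\v v\in\mu_\Z}$, where $f_{\v v}$ is the coefficient of $\v x^{\v v}$ in $f$. Indeed, for $m\ge2$ the $(\v u,\v v)$-entry is the coefficient of $\v x^{m\v v-\v u}$ in $f^{m-1}$, a polynomial supported in $(m-1)\Delta$; a point $\v w\in\Delta$ with $(m-1)\v w=m\v v-\v u$ would give $\v v=\tfrac{m-1}{m}\v w+\tfrac1m\v u$, and extremality of the vertex $\v v$ forces $\v w=\v u=\v v$, so the off-diagonal entries vanish. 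The diagonal entry is the coefficient of $\v x^{(m-1)\v v}$ in $f^{m-1}$; expanding $f^{m-1}$ and applying the same extremality argument to $(m-1)\v v\in(m-1)\Delta$ shows it equals $f_{\v v}^{m-1}$. (For $m=1$ the matrix is the identity by convention, consistent with the formula.) In particular $\beta_p(\Delta)=\mathrm{diag}(f_{\v v}^{p-1})$ is invertible, so Theorems~\ref{main0}, \ref{main1} and~\ref{main2} all apply.

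Next I would read off the Frobenius matrix. With $\sigma=\mathrm{id}$ we have $\beta^\sigma_{p^{s-1}}=\beta_{p^{s-1}}$, so~\eqref{beta-lambda-congruence-without-m} gives
\[
\Lambda_\sigma\equiv\beta_{p^s}(\Delta)\,\beta_{p^{s-1}}(\Delta)^{-1}=\mathrm{diag}\bigl(f_{\v v}^{\,p^s-p^{s-1}}\bigr)\pmod{p^s}
\]
for all $s\ge1$. Since $p^s-p^{s-1}=\varphi(p^s)$ and each $f_{\v v}$ is a $p$-adic unit, $f_{\v v}^{\,p^s-p^{s-1}}\equiv1\pmod{p^s}$, hence $\Lambda_\sigma\equiv I\pmod{p^s}$ for every $s$ and therefore $\Lambda_\sigma=I$.

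Finally, Theorem~\ref{main2} applied with $\Lambda_\sigma=I$ and $\sigma=\mathrm{id}$ says that for each $\v v\in\mu_\Z$ and each $\v k\in C(\Delta-\v b)_\Z$, the coefficient of $\v x^{p^s\v k}$ in the formal expansion of $\omega_{\v v}$ is congruent modulo $p^s$ to the coefficient of $\v x^{p^{s-1}\v k}$ in that same expansion. Multiplying these congruences by $g_{\v v}\in\Z_p$, summing over $\v v\in\mu_\Z$, and using $g/f=\sum_{\v v}g_{\v v}\,\omega_{\v v}$, we obtain $a_{p^s\v k}\equiv a_{p^{s-1}\v k}\pmod{p^s}$. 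To summarize, the diagonalisation of $\beta_m(\Delta)$ in the second step is the main obstacle; after that, Theorem~\ref{main1} (together with the elementary fact $u^{\varphi(p^s)}\equiv1\pmod{p^s}$ for a $p$-adic unit $u$) identifies $\Lambda_\sigma$, and Theorem~\ref{main2} propagates the congruence to the expansion coefficients of $g/f$.
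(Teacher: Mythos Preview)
Your proof is correct and follows the same overall strategy as the paper: reduce to monomials $g=\v x^{\v v}$, invoke Theorems~\ref{main1} and~\ref{main2} to identify $\Lambda_\sigma$, and conclude the congruence for expansion coefficients. The one notable difference is your choice of open set. You take $\mu=\Delta$ and then argue combinatorially that the full matrix $\beta_m(\Delta)$ is diagonal with entries $f_{\v v}^{\,m-1}$, using extremality of the vertices; the paper instead takes $\mu=\{\v v\}$ for each vertex $\v v$ separately (this singleton is open precisely because every lattice point of $\Delta$ is a vertex), so that $\beta_m$ is a $1\times1$ matrix from the outset and Corollary~\ref{special-delta} applies directly to give $\Lambda_\sigma=f_{\v v}^\sigma/f_{\v v}=1$. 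Both routes yield $\Lambda_\sigma=I$; the paper's is marginally shorter since it avoids the diagonalization step, while yours has the mild advantage of packaging everything into a single application of Theorem~\ref{main2}.
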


\begin{proof} It is sufficient to give a proof for a monomial $g(\v x) = \v x^\v v$, $\v v \in \Delta_\Z$.
Application of Theorem \ref{main2} with $\mu = \{\v v\}$, which is an open set due to our assumption on
$\Delta$, yields the congruence $a_{p^s\v k}\is \Lambda_\sigma a_{p^{s-1}\v k}\mod{p^s}$ with
$\Lambda_\sigma \in \Z_p^\times$.
Since $R=\Z_p$ we have that $f^\sigma_{\v v}=f_{\v v}$ and
hence $\Lambda_\sigma=1$ as in Corollary~\ref{special-delta}.
\end{proof}

In \cite{BHS18} the polytope $\Delta$ is a subset of the unit hypercube in $\R^n$,
hence the conditions of Corollary~\ref{b-h-s-congruences} are satisfied.

\section{Semi-simple decomposition}

Let $\tilde n \le n$ be the dimension of $\Delta$. For $0 \le l \le \tilde n$ let $\mu^{(l)} \subset \Delta$
be the complement of the union of faces of codimension $>l$; this is an open set in the topology
defined in Proposition~\ref{topology}. The inclusions
\[
\Delta^\circ = \mu^{(0)} \subset \mu^{(1)} \subset \ldots  \subset \mu^{(\tilde n)} = \Delta
\]
give rise to a filtration on the module of regular functions given by
\be{weight-filtration}
\Omega_f(\mu^{(0)}) \subset \Omega_f(\mu^{(1)}) \subset \ldots  \subset \Omega_f(\Delta) = \Omega_f.
\ee
Note that this filtration is preserved by the connection and its $p$-adic completion is preserved
by the Cartier operator, i.e. 
$\cartier: \hat \Omega_f^n(\mu^{(l)}) \to \hat \Omega_{f^\sigma}^n(\mu^{(l)})$
for each $l$ (see Proposition~\ref{topology}). We quotient the $p$-adic completions
by formally exact forms and obtain  
\[
Q_f(\mu^{(0)}) \subset Q_f(\mu^{(1)}) \subset \ldots  \subset Q_f(\Delta).
\]
Let $\beta_p^{(l)}$ be the Hasse--Witt matrix of $f$ relative to $\mu^{(l)}$. We shall
call $\beta_p^{(0)}$ simply \emph{the Hasse--Witt matrix of $f$}.
The following fact is a straightforward corollary of the congruences stated in
Theorem~\ref{main1}. As in this theorem, we assume that
$\cap_{s \ge 0} p^s R=\{ 0 \}$ and $R$ is $p$-adically complete. Recall that in
this case an element is invertible if and only if it is invertible modulo $p$.
Note also that for any face $\eta \subset \Delta$ the Newton polytope of the
restriction $f|_\eta$ is given by $\eta$.

\begin{theorem}\label{semi-simple}Assume that the coefficients of $f$ at all vertices of $\Delta$ 
are units in $R$. Let $l \ge 1$. The matrix $\beta_{p}^{(l)}$ is invertible if and only if $\beta_p^{(l-1)}$
and the Hasse--Witt matrices of all restrictions $f|_\eta$ to the faces $\eta \subset \Delta$ of
codimension $l$ are invertible. If this is the case, one has the following decomposition of the quotient crystal 
\[
Q_f(\mu^{(l)}) / Q_f(\mu^{(l-1)}) = \underset{\begin{tiny}\bal &\text{faces } \eta \subset \Delta \\ &{\rm codim} 
 (\eta) = l \eal \end{tiny}} \bigoplus Q_{f|_\eta}(\eta^\circ),
\]
where $\eta^\circ$ is the interior of the face $\eta$, and we make the convention that an interior of a vertex is the vertex itself.
\end{theorem}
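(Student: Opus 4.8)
The plan is to reduce the entire statement to a single block‑triangularity property of the period matrices $\beta_m(\mu^{(l)})$, after which both the ``if and only if'' and the decomposition of crystals follow formally from Theorem~\ref{main1}. First the combinatorics: every lattice point $\v u\in\Delta\cap\Z^n$ lies in the relative interior of a unique face $\eta(\v u)$ of $\Delta$ (the smallest face containing it), and one checks from the definition of the topology of Proposition~\ref{topology} that $\v u\in\mu^{(l)}_\Z$ precisely when $\mathrm{codim}\,\eta(\v u)\le l$. Hence
\[
\mu^{(l)}_\Z \;=\; \mu^{(l-1)}_\Z \;\sqcup\; \bigsqcup_{\substack{\eta\subset\Delta\\ \mathrm{codim}\,\eta = l}} (\eta^\circ)_\Z ,\qquad (\eta^\circ)_\Z := \eta^\circ\cap\Z^n .
\]
I will use two elementary facts about a face $\eta\subset\Delta$: (a) in any expression of a point of $\eta$ as a convex combination of points of $\Delta$, all constituents already lie in $\eta$ (apply a supporting affine functional of $\eta$); and (b) the vertices of $\eta$ are vertices of $\Delta$, so under our hypothesis $f|_\eta$ has a vertex of its Newton polytope $\eta$ at which the coefficient is a unit, whence Theorems~\ref{main0} and~\ref{main1} apply to $f|_\eta$ with $\mu=\eta^\circ$. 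Throughout, $\beta_m(f|_\eta)$ denotes the matrix~\eqref{beta-matrices} formed from $f|_\eta$ and indexed by $(\eta^\circ)_\Z$.

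The key step is the claim that, for every $m\ge1$, after grouping rows and columns of $\beta_m(\mu^{(l)})$ according to the decomposition above (with $\mu^{(l-1)}_\Z$ first), one has
\[
\beta_m(\mu^{(l)}) \;=\;
\begin{pmatrix}
\beta_m(\mu^{(l-1)}) & 0\\
\ast & \displaystyle\bigoplus_{\mathrm{codim}\,\eta = l}\beta_m(f|_\eta)
\end{pmatrix}.
\]
(For $m=1$ this is the identity matrix, so assume $m\ge2$.) To prove it, fix $\v v\in(\eta^\circ)_\Z$ with $\mathrm{codim}\,\eta=l$ and $\v u\in\mu^{(l)}_\Z$. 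If $(\beta_m)_{\v u,\v v}=$``coefficient of $\v x^{m\v v-\v u}$ in $f^{m-1}$'' is nonzero, then $\v w:=(m\v v-\v u)/(m-1)$ lies in $\Delta$ and $\v v=\tfrac{m-1}{m}\v w+\tfrac1m\v u$; by~(a) both $\v w$ and $\v u$ lie in $\eta$, and since $\v u\in\mu^{(l)}_\Z$ forces $\mathrm{codim}\,\eta(\v u)\le l$ while $\v u\in\eta$ forces $\eta(\v u)\subseteq\eta$, we get $\eta(\v u)=\eta$, i.e. $\v u\in(\eta^\circ)_\Z$. This gives the vanishing of the off‑block entries. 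When $\v u,\v v$ both lie in $(\eta^\circ)_\Z$, any monomial of $f^{m-1}=(\sum_i f_i\v x^{\v a_i})^{m-1}$ appearing there has exponent $\v c=m\v v-\v u$ with $\v c/(m-1)\in\eta$, and is a product of $m-1$ monomials $\v x^{\v a_{i_j}}$ whose average exponent is $\v c/(m-1)\in\eta$; by~(a) each $\v a_{i_j}\in\eta$, so the coefficient is unchanged upon replacing $f$ by $f|_\eta$. Hence the $(\eta^\circ)_\Z\times(\eta^\circ)_\Z$ diagonal block is exactly $\beta_m(f|_\eta)$. Taking $m=p$ and computing the determinant of this block‑triangular matrix, $\det\beta_p(\mu^{(l)})=\det\beta_p(\mu^{(l-1)})\cdot\prod_{\mathrm{codim}\,\eta=l}\det\beta_p(f|_\eta)$; since under our standing assumptions an element of $R$ is invertible iff it is so mod $p$, and a product is a unit iff every factor is, the first assertion of the theorem follows.

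For the decomposition, assume $\beta_p(\mu^{(l)})$ is invertible; then so are $\beta_p(\mu^{(l-1)})$ and all $\beta_p(f|_\eta)$, and by Theorem~\ref{main0} the modules $Q_f(\mu^{(l)})$, $Q_f(\mu^{(l-1)})$ and $Q_{f|_\eta}(\eta^\circ)$ are free with the evident bases. The filtration being by subcrystals, $Q_f(\mu^{(l)})/Q_f(\mu^{(l-1)})$ is free on the images of $\omega_{\v u}$ with $\v u\in\bigsqcup_\eta(\eta^\circ)_\Z$. Matrices of the above block‑triangular shape are closed under products and inversion, the lower‑right block of a product (resp.\ inverse) being the product (resp.\ inverse) of the lower‑right blocks; so the formula $\Lambda_\sigma\is\beta_{p^s}(\mu^{(l)})\,\beta^\sigma_{p^{s-1}}(\mu^{(l)})^{-1}\mod{p^s}$ of Theorem~\ref{main1}, together with the same formula applied to $\mu^{(l-1)}$ and to each $f|_\eta$, shows that
\[
\Lambda_\sigma(\mu^{(l)})\;\is\;
\begin{pmatrix}
\Lambda_\sigma(\mu^{(l-1)}) & 0\\
\ast & \displaystyle\bigoplus_{\mathrm{codim}\,\eta=l}\Lambda_\sigma(f|_\eta)
\end{pmatrix}
\mod{p^s}
\]
for every $s$; as $\cap_s p^sR=\{0\}$, this holds in $R$. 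The same argument applied to $N_\delta\is\delta(\beta_{p^s}(\mu^{(l)}))\,\beta_{p^s}(\mu^{(l)})^{-1}\mod{p^s}$, using that $\delta$ acts entrywise and hence preserves the block shape, gives $N_\delta(\mu^{(l)})$ block‑triangular with lower‑right block $\bigoplus_\eta N_\delta(f|_\eta)$. Reading off the induced maps on $Q_f(\mu^{(l)})/Q_f(\mu^{(l-1)})$ — whose basis is indexed by $\bigsqcup_\eta(\eta^\circ)_\Z$ — we see that the Cartier operator and every connection map act by $\bigoplus_\eta\Lambda_\sigma(f|_\eta)$ and $\bigoplus_\eta N_\delta(f|_\eta)$, so sending the image of $\omega_{\v u}$ to the corresponding basis vector of $Q_{f|_\eta}(\eta^\circ)$ ($\v u\in(\eta^\circ)_\Z$) is an isomorphism of crystals onto $\bigoplus_\eta Q_{f|_\eta}(\eta^\circ)$.

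The step requiring genuine care is the block‑structure claim, and within it the identification of the diagonal blocks with the period matrices of the restrictions $f|_\eta$ via fact~(a) and the height‑$1$ bookkeeping for the cone $C(\Delta)$; everything after that is linear algebra over $R$ combined with the already‑proved Theorem~\ref{main1}. A minor point to verify along the way is that each $f|_\eta$ inherits the running hypotheses (a unit coefficient at a vertex, and the assumptions on $R$) so that Theorems~\ref{main0} and~\ref{main1} are legitimately applicable to it.
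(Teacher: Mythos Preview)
Your proof is correct and follows essentially the same approach as the paper: establish the block-triangular shape of $\beta_m(\mu^{(l)})$ with diagonal blocks $\beta_m(\mu^{(l-1)})$ and the $\beta_m(f|_\eta)$, then invoke Theorem~\ref{main1} to transfer this structure to $\Lambda_\sigma$ and $N_\delta$. Your argument is in fact more detailed than the paper's in two places --- you spell out the identification of the diagonal blocks with the period matrices of the restrictions $f|_\eta$ (the paper simply writes $\beta_m(\eta_i^\circ)$ without justification), and you treat $N_\delta$ explicitly --- and your convex-combination argument for the vanishing of the off-diagonal blocks is just the dual of the paper's supporting-functional argument.
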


\begin{proof} We write $\mu^{(\ell)} = \eta_1^\circ \cup \eta_2^\circ \cup \ldots \cup 
\mu^{(\ell-1)}$, where $\eta_1,\eta_2,\ldots$ are all faces of
$\Delta$ of codimension $\ell$, and claim that for any $m \ge 1$ matrices $\beta_m(\mu^{(\ell)})$ have the following block structure
\[
\beta_m(\mu^{(\ell)}) \= \begin{pmatrix} \beta_m(\eta_1^\circ) & 0 & 0 & \ldots & * \\
0 & \beta_m(\eta_2^\circ) & 0 & \ldots & * \\ 
&&&\ldots& *\\
0 & 0 & 0 & \ldots & \beta_m(\mu^{(\ell-1)})\\
\end{pmatrix}
\]
with diagonal blocks corresponding to all faces of codimension $\ell$ and $\mu^{(\ell-1)}$ and possibly non-zero off-diagonal blocks only in the last column. This claim follows from the following observation: if $\eta \subsetneq \Delta$ is a face, $\v v \in \eta$ and $\v u \not\in \eta$, then the coefficients of $m \v v - \v u$ in $f(\v x)^{m-1}$ is zero. Indeed, choose a linear functional $\kappa: \R^n \to \R$ such that $\kappa|_{\eta} \equiv c$, and $\kappa(\Delta) \subset \R_{\le c}$ and $\kappa(\v u)<c$ for some $c \in \R$. Then $\kappa(m \v v - \v u) = m c - \kappa(u) > (m-1)c$ and therefore $m \v v - \v u \not\in(m-1)\Delta$.  

Taking $m=p$ we see that the Hasse--Witt matrix $\beta_p(\mu^{\ell})$ is invertible if and only if all 
$\beta_p(\eta_i^\circ)$ and $\beta_p(\mu^{\ell-1})$ are invertible. Since the above mentioned block 
structure is preserved under taking the inverse, by the congruences in Theorem~\ref{main1} matrices 
$\Lambda_\sigma$ and $N_{\delta}$ for $\mu^{(\ell)}$ have the same block structure and the direct 
sum decomposition of the quotient crystal follows immediately.

\end{proof}

\begin{remark} Corollaries~\ref{special-delta} and~\ref{b-h-s-congruences} deal with
the situation when the only lattice points in the Newton polytope $\Delta$ are its vertices.
In this case filtration~\eqref{weight-filtration} has only one step 
(the set $\mu^{(\tilde n - 1)}_\Z$ is empty) and, assuming that the coefficients of 
$f(\v x)=\sum_{\v v} f_\v v \v x^\v v$ at all vertices are units in $R$,
Theorem~\ref{semi-simple} states that the unit-root crystal $Q_f=Q_f(\Delta)$ is a direct
sum of crystals of rank~1. 
\end{remark}

Let us mention that in the regular case (i.e. when $f$ is $\Delta$-regular)
under the identification of the Dwork module
$\dwork = \Omega_f/d\Omega_f$ with the cohomology group $H^n(\T^n \setminus Z_f)$
(after tensoring with the field of fractions of $R$) the image of the
filtration~\eqref{weight-filtration} is the weight filtration of the respective mixed
Hodge structure (see~\cite[Theorem 8.2]{Ba93}). 
Theorem~\ref{semi-simple} thus gives a semi-simple decomposition of unit-root crystals corresponding
to the graded pieces of the weight filtration.

\bigskip

\section{Example}
Consider $f=y^2+tx^3+xy+x$ and $R$ the $p$-adic completion of $\Z_p[t]$. 
We have the following sets of exponent vectors with $u_0=1$,
\begin{enumerate}
\item $\mu^{(2)}_\Z=\{(0,2),(1,0),(3,0),(2,0),(1,1)\}=\Delta_\Z$
\item $\mu^{(1)}_\Z=\{(2,0),(1,1)\}$
\item $\mu^{(0)}_\Z=\{(1,1)\}$
\end{enumerate}
where the $\mu^{(l)}$ are defined in the previous section. The ordering
of the exponent vectors in $\Delta$ is chosen in decreasing filtration order.
Using this ordered basis a straightforward calculation shows that for odd $m$,
\[
\beta_m=\begin{pmatrix}
1 & 0 & 0 & 0 & h_m(t)\\
0 & 1 & 0 & 0 & {1\over2}h_m(t)\\
0 & 0 & t^{m-1} & 0 & {1\over 2t}h_m(t)\\
0 & 0 & 0 & \left({m-1\atop {m-1\over2}}\right)t^{{m-1\over2}} & g_m(t)\\
0 & 0 & 0 & 0 & f_m(t)
\end{pmatrix},
\]
where
\[f_m(t)=\sum_{k\ge0}{m-1\choose 4k}{4k\choose 2k}{2k\choose k}t^k,\quad
g_m(t)=\sum_{k\ge0}{m-1\choose 4k+1}{4k+1\choose 2k}{2k\choose k}t^k,
\]
\[
h_m(t)=\sum_{k\ge1}{m-1\choose 4k-1}{4k-1\choose 2k}{2k\choose k}t^{k}.
\]
For the invertibility of $\beta_p$ we extend $R$ to be 
the $p$-adic completion of $\Z_p[t,(t f_p(t))^{-1}]$. Then $\beta_{p^s}
(\beta_{p^{s-1}}^\sigma)^{-1}$ reads
\[
\begin{pmatrix}
1 & 0 & 0 & 0 & (h_{p^s}(t)-h_{p^{s-1}}(t^p))/f_{p^{s-1}}(t^p)\\
0 & 1 & 0 & 0 & {1\over2}(h_{p^s}(t)-h_{p^{s-1}}(t^p))/f_{p^{s-1}}(t^p)\\
0 & 0 & t^{p-1} & 0 & {1\over2t}(h_{p^s}(t)-h_{p^{s-1}}(t^p))/f_{p^{s-1}}(t^p)\\
0 & 0 & 0 & c(p,s)t^{(p-1)/2} & (g_{p^s}-c(p,s)t^{(p-1)/2}g_{p^{s-1}}(t^p))
/f_{p^{s-1}}(t^p)\\
0 & 0 & 0 & 0 & f_{p^s}(t)/f_{p^{s-1}}(t^p)
\end{pmatrix}
\]
where $c(p,s)={p^s-1\choose (p^s-1)/2}{p^{s-1}-1\choose(p^{s-1}-1)/2}^{-1}$.
We now take the limit as $s\to\infty$. It is not hard to derive that
$c(p,s)\is(-1)^{(p-1)/2}\mod{p^s}$. Also, using ${p^s-1\choose m}
\is {p^{s-1}-1\choose \lfloor m/p\rfloor}\mod{p^s}$, one easily
shows that 
\[
h_{p^s}(t)-h_{p^{s-1}}(t^p)+{1\over2}(f_{p^s}(t)-f_{p^{s-1}}(t^p))\is0\mod{p^s}.
\] 
Finally, experiment shows that 
\[
{f_{p^s}(t)\over f_{p^{s-1}}(t^p)}\is {F(t)\over F(t^p)}\mod{p^s},
\]
where $F(t)=\,_2F_1(1/4,3/4,1|64t)$ and
\[
{g_{p^s}(t)-(-t)^{(p-1)/2}g_{p^{s-1}}(t^p)\over f_{p^{s-1}}(t^p)}
\is {G(t)-(-t)^{(p-1)/2}G(t^p)\over F(t^p)}\mod{p^s},
\]
where $G(t)=\,_3F_2(5/4,3/4,1/2;3/2,1|64t)$.
Putting everything together we find the limit
\[
\Lambda=\begin{pmatrix}
1 & 0 & 0 & 0 & {-1\over2}\left(\frac{F(t)}{F(t^p)}-1\right)\\
0 & 1 & 0 & 0 & {-1\over4}\left(\frac{F(t)}{F(t^p)}-1\right)\\
0 & 0 & t^{p-1} & 0 & {-1\over4t}\left(\frac{F(t)}{F(t^p)}-1\right)\\
0 & 0 & 0 & (-t)^{(p-1)/2} & \frac{G(t)-(-t)^{(p-1)/2}G(t^p)}{F(t^p)}\\
0 & 0 & 0 & 0 & \frac{F(t)}{F(t^p)}
\end{pmatrix}
\]
We make a few observations. 
\begin{itemize}
\item[(i)] $(1,1,t,0,1)\Lambda=(1,1,t^p,0,1)$
\item[(ii)] $(2,0,0,0,1)\Lambda=(2,0,0,0,1)$
\item[(iii)] $(0,1,3t,0,1)\Lambda=(0,1,3t^p,0,1).$
\end{itemize}
These equalities imply that in $Q_f$ we have 
\[
\cartier\left(\frac{f_i}{f}\right)=\frac{f_i^\sigma}{f^\sigma}\qquad
i=0,1,2,
\]
where $f_0=f,f_1=x\frac{\partial f}{\partial x},f_2=y\frac{\partial f}{\partial y}$.
This is a general phenomenon, as shown in the following theorem. 

\begin{proposition}\label{directsum}
Let $f(\v x) = f(x_1,\ldots,x_n)$ be a Laurent polynomial with coefficients in a characteristic 
zero ring $R$ such that at least one vertex coefficient of $f$ is a unit modulo $p$.
Let $f_0=f$ and $f_i=x_i\frac{\partial f}{\partial x_i}$ for $i=1,\ldots,n$. 
Then for all $i$,
\[
\cartier\left(\frac{f_i}{f}\right)\is\frac{f_i^\sigma}{f^\sigma}
\mod{p \, d\hat\Omega_{f^\sigma}}.
\]
\end{proposition}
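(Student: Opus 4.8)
The plan is to compute $\cartier(f_i/f)$ directly by the $p$-adic expansion used in the proof of Proposition~\ref{cartier-def} and to show that all correction terms organise into $p$ times a formal derivative. The case $i=0$ is trivial, since $f_0/f=1=f_0^\sigma/f^\sigma$ and $\cartier(1)=1$, so I assume $1\le i\le n$ and write $\theta_i=x_i\frac{\partial}{\partial x_i}$. Fix a vertex $\v b$ of $\Delta$ with $f_{\v b}$ a unit in $R$ (one exists by hypothesis), and write $f^p=f^\sigma(\v x^p)-pG$ as in Proposition~\ref{cartier-def}, with $G$ a Laurent polynomial over $R$ supported in $p\Delta$. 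Applying $\theta_i$ to this identity and using the chain rule $\theta_i(h(\v x^p))=p\,(\theta_ih)(\v x^p)$ together with $\theta_i(f^p)=p\,f^{p-1}f_i$ yields the local identity
\[
f^{p-1}f_i\;=\;f_i^\sigma(\v x^p)-\theta_i(G),
\]
where I also use that applying $\sigma$ to coefficients commutes with $\theta_i$, so that $\theta_i(f^\sigma)=f_i^\sigma$; this last equality reappears below.

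Next I would expand, exactly as in the proof of Proposition~\ref{cartier-def} with $u_0=1$,
\[
\frac{f_i}{f}=\frac{f_if^{p-1}}{f^\sigma(\v x^p)-pG}=\sum_{r\ge0}p^r\,\frac{f_if^{p-1}G^r}{f^\sigma(\v x^p)^{r+1}},
\]
apply $\cartier$ term by term and use Lemma~\ref{cartierhandy} to extract $f^\sigma(\v x)^{-(r+1)}$, obtaining $\cartier(f_i/f)=\sum_{r\ge0}p^r\,\cartier(f_if^{p-1}G^r)/f^\sigma(\v x)^{r+1}$ (convergence in $\hat\Omega_{f^\sigma}$ being exactly the estimate of Proposition~\ref{cartier-def}). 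Substituting $f_if^{p-1}=f_i^\sigma(\v x^p)-\theta_i(G)$, Lemma~\ref{cartierhandy} gives $\cartier(f_i^\sigma(\v x^p)G^r)=f_i^\sigma\cartier(G^r)$, while $G^r\theta_i(G)=\frac1{r+1}\theta_i(G^{r+1})$ and $\cartier\circ\theta_i=p\,\theta_i\circ\cartier$ give $\cartier(G^r\theta_i(G))=\frac{p}{r+1}\theta_i\cartier(G^{r+1})$. Pulling out the $r=0$ term, which is precisely $f_i^\sigma/f^\sigma$, one arrives at
\[
\cartier\Bigl(\frac{f_i}{f}\Bigr)-\frac{f_i^\sigma}{f^\sigma}=\sum_{r\ge1}p^r\,\frac{f_i^\sigma\cartier(G^r)}{f^\sigma(\v x)^{r+1}}-\sum_{r\ge0}\frac{p^{r+1}}{r+1}\,\frac{\theta_i\cartier(G^{r+1})}{f^\sigma(\v x)^{r+1}}.
\]

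The crucial observation — and the step I expect to need the most care — is that these two tail sums add up to an \emph{exact} derivative. By the quotient rule and $\theta_i(f^\sigma)=f_i^\sigma$,
\[
\theta_i\Bigl(\frac{\cartier(G^{r+1})}{f^\sigma(\v x)^{r+1}}\Bigr)=\frac{\theta_i\cartier(G^{r+1})}{f^\sigma(\v x)^{r+1}}-(r+1)\,\frac{f_i^\sigma\cartier(G^{r+1})}{f^\sigma(\v x)^{r+2}},
\]
so multiplying by $p^{r+1}/(r+1)$, summing over $r\ge0$ and re-indexing the second term shows that the right-hand side above equals $-\sum_{r\ge0}\frac{p^{r+1}}{r+1}\theta_i\bigl(\cartier(G^{r+1})/f^\sigma(\v x)^{r+1}\bigr)$. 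It remains to check that this series lies in $p\,d\hat\Omega_{f^\sigma}$: since $G^{r+1}$ is supported in $(r+1)p\Delta$, its Cartier transform $\cartier(G^{r+1})$ is a Laurent polynomial supported in $(r+1)\Delta$, whence $r!\,\cartier(G^{r+1})/f^\sigma(\v x)^{r+1}\in\Omega_{f^\sigma}$; combined with $\ord_p(p^{r+1}/(r+1)!)\ge1$ — this is where the factorials in the generators $\omega^\sigma_{\v v}$ are used, cf.\ Remark~\ref{whyfactorials} — each summand lies in $p\,d\Omega_{f^\sigma}$ and the series converges $p$-adically to an element of $p\,d\hat\Omega_{f^\sigma}$. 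This proves $\cartier(f_i/f)\equiv f_i^\sigma/f^\sigma\pmod{p\,d\hat\Omega_{f^\sigma}}$. The routine parts I am skipping are the elementary chain- and quotient-rule verifications and the bookkeeping of the geometric series (supports lying in the relevant cones, termwise application of $\cartier$), which proceed exactly as in Proposition~\ref{cartier-def}.
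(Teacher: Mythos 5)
Your proof is correct and amounts to the same argument as the paper's, written without the logarithm shortcut: the paper computes $\cartier(\theta_i\log f)=\theta_i\cartier(\log f^p)$, expands $\log f^p=\log f^\sigma(\v x^p)+\log(1+pG/f^\sigma(\v x^p))$ and takes $\theta_i$ at the end, which yields exactly your final expression $-\sum_{r\ge1}\frac{(-p)^r}{r}\theta_i\bigl(\cartier(G^r)/f^\sigma(\v x)^r\bigr)$ (up to the opposite sign convention for $G$). You instead differentiate the relation $f^p=f^\sigma(\v x^p)-pG$ first, expand $f_i/f$ geometrically, and telescope the tail via the quotient rule — a slightly longer but fully rigorous route to the identical sum, and the $p$-adic estimate $\ord_p(p^{r+1}/(r+1)!)\ge 1$ for $p>2$ that puts it in $p\,d\hat\Omega_{f^\sigma}$ matches the paper's.
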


\begin{proof}
The case $i=0$ comes down to $\cartier(1)=1$, which is trivial.
So let $i>0$. As earlier, we will use the notation $\theta_i = x_i \frac{\partial}{\partial x_i}$. 
Notice that 

\begin{eqnarray*}
\cartier\left(\frac{f_i}{f}\right)&=&
\cartier(\theta_i (\log f(\v x)))\\
&=&p \theta_i \left(\cartier(\log f(\v x))\right)\\
&=& \theta_i \left(\cartier(\log f(\v x)^p)\right)\\
&=& \theta_i \left(\cartier(\log f^\sigma(\v x^p))+
\cartier \left(\log\left(1+p\frac{G(\v x)}{f^\sigma(\v x^p)}\right)\right)\right),
\end{eqnarray*}
where $pG(\v x)=f(\v x)^p-f^\sigma(\v x^p)$. Observe that $\cartier(\log f^\sigma(\v x^p))
=\log f^\sigma(\v x)$. Power series expansion of the $\log$ in
\[
\cartier\left(\log\left(1+p\frac{G(\v x)}{f^\sigma(\v x^p)}\right)\right)
\]
gives us
\[
-\sum_{r\ge1}\frac{(-p)^r}{r}\frac{\cartier(G(\v x)^r)}{f^\sigma(\v x)^r}.
\]
Combining these evaluations gives us the final result
\[
\cartier\left(\frac{f_i}{f}\right)=
\frac{f^\sigma_i}{f^\sigma}(\v x)
-\sum_{r\ge1}\theta_i\left(\frac{(-p)^r}{r!}(r-1)!\frac{\cartier(G(\v x)^r)}
{f^\sigma(\v x)^r}\right).
\]
Clearly the latter summation belongs to $p \, d\hat\Omega_{f^\sigma}$ when $p>2$. 
\end{proof}

We now determine the limit of 
\[
\theta(\beta_{p^s})\beta_{p^s}^{-1}=
\begin{pmatrix}
0 & 0 & 0 & 0 & \frac{\theta h_{p^s}(t)}{f_{p^s}(t)}\\
0 & 0 & 0 & 0 & \frac{\theta h_{p^s}(t)}{2f_{p^s}(t)}\\
0 & 0 & p^s-1 & 0 & \frac{\theta h_{p^s}(t)-p^sh_{p^s}}{2tf_{p^s}(t)}\\
0 & 0 & 0 & {1\over2}(p^s-1) & \frac{2\theta g_{p^s}(t)-(p^s-1)g_{p^s}(t)}{2f_{p^s}(t)}\\
0 & 0 & 0 & 0 & \frac{\theta f_{p^s}(t)}{f_{p^s}(t)}
\end{pmatrix}
\]
where $\theta=t\frac{d}{dt}$. 
Some experiment suggests the following congruences
\[
\frac{\theta f_{p^s}(t)}{f_{p^s}(t)}\is \frac{\theta F(t)}{F(t)}\mod{p^s},
\quad
\frac{\theta h_{p^s}(t)}{f_{p^s}(t)}\is\frac{-1}{2}\frac{\theta F(t)}{F(t)}\mod{p^s}
\]
and
\[
\frac{2\theta g_{p^s}(t)+g_{p^s}(t)}{f_{p^s}(t)}
\is -\frac{4\theta F(t)+F(t)}{F(t)}\mod{p^s}.
\]
This yields the limit matrix
\[
N=\begin{pmatrix}
0 & 0 & 0 & 0 & {-1\over2}\frac{\theta F(t)}{F(t)}\\
0 & 0 & 0 & 0 & {-1\over4}\frac{\theta F(t)}{F(t)}\\
0 & 0 & -1 & 0 & {-1\over 4t}\frac{\theta F(t)}{F(t)}\\
0 & 0 & 0 & {-1\over2} & -2\frac{\theta F(t)}{F(t)}-{1\over2}\\
0 & 0 & 0 & 0 & \frac{\theta F(t)}{F(t)}
\end{pmatrix}
\]
From this limit matrix it easily follows that $\frac{f_i}{f},
i=0,1,2$ are horizontal in $Q_f$, that is they are annihilated by $\theta$. This is a general phenomenon.

\begin{proposition}\label{horizontal-fi-over-f}
Let notations be as in Proposition~\ref{directsum} and $\delta$ be a derivation on
$R$. Then we have
\[
\delta\left(\frac{f_i}{f}\right)\is0 \mod {d\Omega_f}.
\]
\end{proposition}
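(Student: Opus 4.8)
The plan is to reduce the claim directly to the definition of $d\Omega_f$. The case $i=0$ is vacuous, since $f_0/f=1$ and $\delta(1)=0$. So I would fix $i\in\{1,\dots,n\}$ and, as in the proof of Proposition~\ref{directsum}, write $\theta_i=x_i\frac{\partial}{\partial x_i}$.

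The first step is the commutation identity $\delta(f_i/f)=\theta_i(\delta(f)/f)$ in $\Omega_f$. Since $\delta$ is extended to rational functions by $\delta(x_j)=0$, it acts only on the $R$-coefficients while $\theta_i$ acts only on the monomials; hence $\delta$ and $\theta_i$ commute on every Laurent polynomial, so in particular $\delta(f_i)=\delta(\theta_i f)=\theta_i(\delta f)$. A one-line Leibniz computation, using that $\delta$ and $\theta_i$ are derivations, then gives
\[
\delta\!\left(\frac{f_i}{f}\right)\;=\;\frac{\theta_i(\delta f)}{f}\;-\;\frac{(\theta_i f)\,\delta(f)}{f^2}\;=\;\theta_i\!\left(\frac{\delta f}{f}\right).
\]

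The second step is to check that $\delta(f)/f\in\Omega_f$. Writing $f=\sum_j f_j\v x^{\v a_j}$ one has $\delta(f)=\sum_j\delta(f_j)\v x^{\v a_j}$, a Laurent polynomial with coefficients in $R$ whose support lies in $\supp(f)\subseteq\Delta$; hence $\delta(f)/f=\sum_j\delta(f_j)\,\omega_{(1,\v a_j)}$ --- the factor $(u_0-1)!=0!=1$ being harmless --- with each $(1,\v a_j)\in C(\Delta)_\Z^+$. The third step is then immediate: by definition $d\Omega_f$ is the $R$-span of the functions $\theta_i\omega$ with $\omega\in\Omega_f$, so $\theta_i(\delta(f)/f)\in d\Omega_f$, and combining with the first step we get $\delta(f_i/f)\in d\Omega_f$, i.e. $\delta(f_i/f)\is0\mod{d\Omega_f}$.

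I do not expect a genuine obstacle: the argument is purely formal. The one point that deserves a moment's care is the second step --- confirming that $\delta(f)/f$ really lies in $\Omega_f$ --- which comes down to the elementary remark that applying the derivation $\delta$ to the coefficients of $f$ cannot enlarge its Newton polytope $\Delta$, so that no monomials outside $\Delta$ appear.
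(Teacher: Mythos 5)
Your proof is correct and takes essentially the same route as the paper's: the single identity $\delta(f_i/f)=\theta_i(\delta(f)/f)$, combined with $\delta(f)/f\in\Omega_f$, is exactly the paper's one-line argument. You simply spell out the intermediate observations (commutation of $\delta$ and $\theta_i$, membership of $\delta(f)/f$ in $\Omega_f$, and the trivial case $i=0$) that the paper leaves implicit.
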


\begin{proof}
The proof is immediate,
\[
\delta\left(\frac{f_i}{f}\right) = \frac{\delta(f)_i \cdot f - f_i \cdot \delta(f)}{f^2}
 = \theta_i \Bigl( \frac{\delta(f)}{f}\Bigr).
\]
\end{proof}

Finally, getting back to our example, we mention the matrix
\[
Y=\begin{pmatrix}
1 & 0 & 0 & 0 & {-1\over2}F(t)\\
0 & 1 & 0 & 0 & {-1\over4}F(t)\\
0 & 0 & t^{-1} & 0 & {-1\over 4t}{F(t)}\\
0 & 0 & 0 & t^{-1/2} & G(t)\\
0 & 0 & 0 & 0 & F(t)
\end{pmatrix}.
\]
This is a fundamental solution matrix of the system of first order equations
\[
\theta\v y(t)=N\v y(t),
\]
where $\v y$ is a column vector of $5$ unknown functions in $t$.

\bigskip

Note that Propositions~\ref{directsum} and~\ref{horizontal-fi-over-f} have nothing to
do with the unit-root crystal $Q_f$: their statements hold modulo
$d \widehat \Omega_{f^\sigma}$ and $d \Omega_f$ respectively and not just modulo
formal derivatives. These propositions show that $\oplus_{i=0}^n R \frac{f_i}f$ is a
subcrystal in the completed Dwork crystal $\widehat \dwork = \widehat \Omega_f / d \widehat \Omega_f$,
on which the Cartier operator acts as the identity. In the geometric situation mentioned
at the end of the introductory section, this subcrystal should correspond to the embedding
of $H^n_{dR}(\T^n)$ into $H^n_{dR}(\T^n \setminus Z_f) \cong \dwork$.

\bigskip

\section*{Acknowledgement}
The authors are grateful to Alan Adolphson for his comments on the manuscript.

We would like to thank to the MATRIX institute in Creswick, Max Planck Institute and Hausdorff Research Institute for Mathematics in Bonn, the University of Utrecht and Institute of Mathematics of the Polish Academy of Sciences in Warsaw, where collaboration on this project partly took place. The second author also thanks to the Mathematical Sciences Research Institute in Berkeley.

\appendix
\section{Point counting and an alternative construction of the Cartier operator}
Suppose that $R=\Z_q$ where $q=p^a$ and $\sigma$ is the standard $p$th power Frobenius lift 
satisfying $\sigma^a=id$. Then the $a$th iteration of the Cartier operator 
$\scrC_q:=\cartier^a$ maps $\hat\Omega_f$ to itself. It follows from the 
estimate~\eqref{cartier-entries-estimate} that modulo every power $p^s$ the image of
$\cartier$ has finite rank. By this reason the trace of $\scrC_q$ is a well defined $p$-adic value.
In this section we will prove
the following 

\begin{theorem}\label{pointcount}
The trace of $(q^s-1)^n \times \scrC_q^s$ on $\hat \Omega_f$ equals 
the number of points on $\T^n \setminus Z_f$ with coordinates $x_1,\ldots,x_n\in\F^\times_{q^s}$.
\end{theorem}

\begin{remark} Since $\scrC_q$ is divisible by $q$ on the submodule of formal derivatives $U_f \subset \widehat \Omega_f$, we conclude from Theorem~\ref{pointcount} that on the quotient $Q_f = \widehat\Omega_f / U_f$ one has
\[
\Tr(\scrC_q^s | Q_f) \equiv 1+(-1)^{n+1} \#Z_f(\F_{q^s}) \; \mod {q^s}. 
\]
The term $1$ on the right corresponds to the eigenvector $1 \in \Omega_f$, which has eigenvalue $1$.
If the Hasse--Witt matrix $\beta_p(\Delta)$ is invertible, then $Q_f$ is a $\Z_q$-vector space of finite dimension and it follows from the above congruence that the polynomial
\[
\det(1- T \scrC_q | Q_f) / (1-T) \in \Z_q[T]
\]
raised to the power $(-1)^{n}$
is the unit-root part of the zeta function of the hypersuface $Z_f \subset \T^n$ over $\F_q$. Note that in our standard basis in $Q_f$ (that is, images of $\omega_\v u \in \Omega_f, \v u \in \Delta_\Z$) the operator $\scrC_q$ is given by the transpose of 
\[
\Lambda \, \sigma(\Lambda) \, \ldots \, \sigma^{a-1}(\Lambda),
\]
where $\Lambda=\Lambda_\sigma$ is the matrix from Theorem~\ref{main1}.
\end{remark}

\bigskip

We will use a resolution of the module $\Omega_f$. This construction ties our crystals with the exponential modules in the literature, e.g. in~\cite{Ba93},
and exhibits a natural lift of our Cartier operator which possesses nice properties and hence might be
useful on its own. With the lifted Cartier operator, the point counting can be done using a version of
Dwork's trace formula, which is now a standard technique in $p$-adic analysis. 

From now on $R$ is a characteristic zero ring, we will impose more assumptions when needed. Let us introduce
the auxiliary variable $x_0$ and define the subring 
\[
R[\Delta] \subset R[x_0,x_1^{\pm 1},\ldots,x_n^{\pm 1}]
\]
as the span of monomials $\v X^\v u = x_0^{u_0} \ldots x_n^{u_n}$ with $\v u \in C(\Delta)_\Z$. 
We remind the reader that throughout the paper we denoted $\v x^{\v u} = x_1^{u_1} \ldots x_n^{u_n}$
for $\v u=(u_0,\ldots,u_n)$, so now we shall use the capital letter for $\v X^\v u = x_0^{u_0} \v x^{\v u}$.
We denote $F(\v X) = x_0 f(\v x) \in R[\Delta]$. The operations 
\[
\scrD_{i, f} \= x_i \frac{\partial}{\partial x_i} + x_i \frac{\partial F(\v X)}{\partial x_i} : R[\Delta] \to R[\Delta], \quad i = 0,\ldots, n
\]
are called \emph{twisted derivatives}. Formally, we have 
$\scrD_{i,f} = e^{-F} \circ \theta_i \circ e^F$, where $\theta_i = x_i \frac{\partial}{\partial x_i}$.
Twisted derivatives commute with each other.      

Let $R[\Delta]^+$ be the free $R$-module generated by $\v X^\v u$ with $\v u \in C(\Delta)_Z^+$.
It is an ideal in $R[\Delta]$ and twisted derivatives preserve $R[\Delta]^+$. The \emph{Laplace transform}
is the $R$-linear map $\scrR : R[\Delta]^+ \to \Omega_f$ given by
\[\v X^{\v u} \mapsto (-1)^{u_0} (u_0-1)! \frac{\v x ^\v u}{f(\v x)^{u_0}} \=  (-1)^{u_0}\omega_{\v u}.
\]
This Laplace transform was basically defined in~\cite[p.244]{Ka68} and~\cite[\S 7]{Ba93}. 
See also our Remark~\ref{whyfactorials}. It is clear that $\scrR$ is surjective. 

\begin{proposition}\label{resolving-omega-f} The kernel of the Laplace transform is given by
$\scrD_{0,f}(R[\Delta]^+)$. Under the induced isomorphism
\[
\scrR : R[\Delta]^+/ \scrD_{0,f}(R[\Delta]^+) \overset{\sim}\to \Omega_f 
\] 
the twisted defivative $\scrD_{i,f}$ corresponds to the usual derivative $\theta_i$ for each $1 \le i \le n$.
\end{proposition} 
\begin{proof} Under the Laplace transform the elements $\scrD_{0,f}
(\v X^\v u) = u_0 \v X^{\v u} + x_0 f(\v x) \v X^{\v u}$ are mapped to 
\[
(-1)^{u_0}u_0! \frac{\v x ^\v u}{f(\v x)^{u_0}} + (-1)^{u_0+1} u_0!  \frac{\v x ^\v u f(\v x)}{f(\v x)^{u_0+1}} \= 0.
\]  
It is clear that these elements generate all relations in $\Omega_f$ and therefore they span the kernel of $\scrR$. 

Let $1 \le i \le n$. Since twisted derivatives commute, $\scrD_{i,f}$ maps ${\rm Ker}(\scrR)={\rm Im}(\scrD_{0,f})$ to itself. The fact that the induced map on $\Omega_f$ coincides with $\theta_i$ can be easily checked on monomials:
\[\bal
\scrR \left(\scrD_{i,f} (\v X^\v u) \right) &\= \scrR \left( u_i \v X^\v u + \theta_i F(\v X) \v X^\v u \right)\= (-1)^{u_0} (u_0-1)! \left(  u_i \frac{\v x^{\v u}}{f(\v x)^{u_0}} - u_0 \frac{\v x^{\v u} \theta_i f(\v x)}{f(\v x)^{u_0+1}}\right)\\ & \= (-1)^{u_0} \theta_i \omega_\v u = \theta_i \scrR(\v X^\v u).
\eal\]
\end{proof} 
 
\begin{corollary}\label{dwork-deRham} $R[\Delta]^+/\sum_{i=0}^{n} \scrD_{i,f}(R[\Delta]^+) \cong \dwork$.
\end{corollary}

We would like to remark that in~\cite[Theorem 7.13]{Ba93} the quotient module on the left in this corollary was identified with $H^n_{dR}(\T^n \setminus Z_f)$ under the condition that $R$ is a field and $f(\v x)$ is $\Delta$-regular. At the end of the introductory section we mentioned the relation between Dwork modules and de Rham cohomology having in mind Corollary~\ref{dwork-deRham}.

To define the Cartier operator, we turn on our usual assumptions that $\cap_s p^s R = \{0\}$ and $R$ is $p$-adically complete. Let $R\lb \Delta \rb$ be the ring of formal power series with coefficients in $R$ and support in $C(\Delta)$. The $p$-adic completion $\widehat{R[\Delta]} = \underset{s} \lim R[\Delta]/p^s R[\Delta]$ consists of power series with infinitely growing $p$-adic valuation of coefficients:  
\[
\widehat{R[\Delta]} = \left\{ \sum_{\v u \in C(\Delta)_\Z} a_\v u \v X^\v u \;:\; a_\v u \in R, \quad \ord_p(a_\v u) \to \infty \hbox{ as } u_0 \to \infty \right\} \subset R\lb \Delta \rb. 
\]
We denote by $\widehat{R[\Delta]}^+$ the ideal of power series with zero constant term ($a_\v 0 = 0$). It follows from Proposition~\ref{resolving-omega-f} that $\hat\Omega_f \cong \widehat{R[\Delta]}^+/\scrD_{0,f}\Bigl(\widehat{R[\Delta]}^+\Bigr)$.
 
\begin{theorem}\label{cartier-coincide-thm} Consider the operator on power series given by 
\[
V_p \left( \sum_{\v u} a_{\v u} \v X^{\v u} \right) \=  \sum_{\v u} (-p)^{u_0} a_{p \v u} \, \v X^{\v u}.
\]
Let $p>2$. For every $p$th power Frobenius lift $\sigma: R \to R$, the operator 
\[
\scrV_{\sigma} = e^{-F^\sigma} \circ V_p \circ e^F
\]
maps $R\lb\Delta\rb$ to itself. Operator $\scrV_\sigma$ preserves $R\lb \Delta\rb^+$ and it is divisible by $p$ on this submodule. The following commutation relation with twisted derivatives
\be{p-commutation-cartier-lift}
\scrD_{i,f^\sigma} \circ \scrV_\sigma = p \, \scrV_\sigma \circ \scrD_{i,f}
\ee
holds for each $0 \le i \le n$. 

The operator $\scrV_\sigma$ preserves $\widehat{R[\Delta]}$ and the induced map $p^{-1} \scrV_\sigma: \widehat\Omega_f \to \widehat \Omega_{f^\sigma}$ coincides with the Cartier operator $\cartier: \widehat\Omega_f \to \widehat \Omega_{f^\sigma}$ constructed in Section~\ref{sec:cartier}.
\end{theorem}

\begin{proof} Fix any $\v u \in C(\Delta)_\Z^+$ and denote $\nu_0=p\lceil \frac{u_0}{p}\rceil - u_0$. Observe that
\[\bal
V_p \Bigl( e^{x_0 f(\v x) }\v X^\v u\Bigr) &\= \sum_{n \ge 0,\; p\,|(n+u_0)} \frac{(- p x_0)^{\frac{n+u_0}p} \cartier(f(\v x)^n \v x^\v u)}{n!} \\
&\= \sum_{m \ge 0} \frac{(- p x_0)^{m+\lceil \frac{u_0}{p}\rceil} \cartier(f(\v x)^{p m + \nu_0} \v x^\v u)}{(p m + \nu_0)!}
\eal\]
and therefore 
\[\bal
\scrV_\sigma \v X^{\v u} &\= \sum_{k \ge 0} \frac{(-x_0)^{k} f^\sigma(\v x)^k}{k!} \sum_{m \ge 0} \frac{(- p x_0)^{m+\lceil \frac{u_0}{p}\rceil} \cartier(f(\v x)^{p m + \nu_0} \v x^\v u)}{(p m + \nu_0)!}\\
&\=  \sum_{r \ge 0} (- p x_0)^{r + \lceil \frac{u_0}{p}\rceil} \, \cartier \Bigl( \v x^\v u \sum_{m+k=r} \frac{f(\v x)^{p m + \nu_0} f^\sigma(\v x^p)^k}{(pm+\nu_0)! \, k! p^k}\Bigr)\\
&\= \sum_{r \ge 0} (-p x_0)^{r + \lceil \frac{u_0}p\rceil} \cartier \Bigl( \v x^\v u \sum_{s = 0}^r  \frac{G(\v x)^{r-s} f(\v x)^{ps+\nu_0}}{(r-s)!} \gamma_{p s + \nu_0} \Bigr)\\ 
\eal\]
where we substituted $f^\sigma(\v x^p)=f(\v x)^p + p G(\v x)$ and recognised the sums
\[
\gamma_{p s + \nu_0} =  \sum_{m=0}^{s} \frac{1}{(pm+\nu_0)! \, (s-m)!p^{s-m}}\\
\]
as coefficients of the \emph{Dwork exponential} $e^{x_0+\frac{x_0^p}{p}} = \sum_{n \ge 0} \gamma_n x_0^n$. The following standard estimate of their $p$-adic order 
\be{dwork-exponential-estimate}
\ord_p(\gamma_n) \ge \bigl(\frac{p-1}{p^2}-\frac{1}{p-1}\bigr)n
\ee
implies that the matrix coefficients given by $\scrV_\sigma \v X^\v u = \sum_{\v v} \scrF_{\v u,\v v} \v X^\v v$ have $p$-adic valuations bounded by
\[\bal
\ord_p (\scrF_{\v u,\v v}) &\ge r + \lceil \frac{u_0}p\rceil + \underset{0 \le s \le r}\min \Bigl( (-\frac1{p-1})(r-s) + \bigl( \frac{p-1}{p^2} - \frac{1}{p-1}\bigr)\bigl(ps+\nu_0 \bigr) \Bigr) \\
&\= r + \lceil \frac{u_0}p\rceil - \frac{r}{p-1} +  \underset{0 \le s \le r}\min \Bigl(- \frac{s}p  \Bigr) + \bigl( \frac{p-1}{p^2} - \frac{1}{p-1}\bigr) \bigr(p\lceil \frac{u_0}p \rceil - u_0\bigl)  \\
&\= \bigl( \frac{p-1}{p} - \frac1{p-1}\bigr) \bigl( r + \lceil \frac{u_0}p\rceil  \bigr) - \bigl( \frac{p-1}{p^2} - \frac{1}{p-1} \bigr)u_0 \\
& \= \bigl( \frac{p-1}{p} - \frac1{p-1}\bigr) v_0 + \bigl(\frac{1}{p-1} - \frac{p-1}{p^2}\bigr)u_0,
\eal\] 
where we used $r = v_0 - \lceil \frac{u_0}{p}\rceil$. Since this valuation is non-negative, we conclude that all $\scrF_{\v u, \v v} \in R$ and hence $\scrV_\sigma$ maps the module of formal series $R\lb \Delta \rb$ to itself. We also observe that $\scrV_\sigma$ is divisible by $p$ on $R\lb \Delta \rb^+$. Moreover, $\scrV_\sigma$ preserves $\widehat {R[\Delta]}$ because when $v_0 \to \infty$ we have $\ord_p(\scrF_{\v u, \v v} \in R) \to \infty$ uniformly in $u_0$.

\bigskip   

The commutation relation~\eqref{p-commutation-cartier-lift} follows immediately from $\theta_i \circ V_p = p \, \theta_i \circ V_p$ and the fact that $\scrD_{i,f}=e^{-F}\circ \theta_i\circ e^F$. Since $\widehat \Omega_f = \widehat{R[\Delta]}^+ / \scrD_{0,f}( \widehat{R[\Delta]}^+)$, we have a well defined induced map $p^{-1} \scrV_\sigma: \widehat\Omega_f \to \widehat\Omega_f^\sigma$. Let us show that this induced map coincides with the Cartier operator $\cartier$ defined in Proposition~\ref{cartier-def}. Since $\scrR(\v X^\v w)=(-1)^{u_0} \omega_{\v w}$, we consider 
\be{difference-of-two-cartier}\bal
 & p^{-1}  \scrV_\sigma \v X^\v u \- \sum_{\v v \in C(\Delta)_\Z^+} (-1)^{v_0+u_0} \, F_{\v u, \v v} \v X^\v v \\
 & \= \frac1p \sum_{r \ge 0} (-p x_0)^{r + \lceil \frac{u_0}p\rceil} 
 \cartier \Bigl( \v x^\v u \sum_{s = 0}^r  G(\v x)^{r-s} f(\v x)^{ps+\nu_0} 
 \frac{\gamma_{ps+\nu_0} - \delta_{s,0} \Gamma_p(u_0)}{(r-s)!}    \Bigr),
\eal\ee
where we used formula~\eqref{cartiermatrix-entries} and substitution 
$\frac{(u_0-1)!}{(\lceil \frac{u_0}{p}\rceil - 1)!}=(-1)^{u_0} p^{\lceil \frac{u_0}{p}\rceil - 1} 
\Gamma_p(u_0)$. It is easy to check that the difference~\eqref{difference-of-two-cartier} equals
\be{difference-of-two-cartier-as-derivative}
\scrD_{0,f^\sigma} \Bigl( \frac1p  \sum_{r \ge 0} (-p x_0)^{r + \lceil \frac{u_0}p\rceil} \cartier \bigl( \v x^\v u \sum_{s = 0}^r G(\v x)^{r-s} f(\v x)^{ps+\nu_0} \frac{ \mu_s }{(r-s)!} \bigr)\Bigr),
\ee
where the coefficients $\mu_s \in \Q$ are determined by the recurrence
\[
\bigl(s+\lceil \frac{u_0}p\rceil\bigr) \mu_{s} \= p^{-1} \, \mu_{s-1} + \gamma_{p s + \nu_0} - \Gamma_p(u_0) \delta_{s,0}.
\]
(The initial term $\mu_0$ is also determined by this formula and convention $\mu_{-1}=0$.) We claim that
\be{mu-padic-rate}
\ord_p(\mu_s) \ge \bigl(-\frac{1}{p} - \frac{1}{p-1}\bigr) s + 1 - \lceil \frac{u_0}p\rceil , 
\ee
and hence inside of $\scrD_{0,f^\sigma}(\ldots)$ in~\eqref{difference-of-two-cartier-as-derivative} the $p$-adic valuation of coefficients of the polynomial next to $x_0^{r + \lceil \frac{u_0}p\rceil}$ can be estimated from below as
\[\bal
-1 + r + \lceil \frac{u_0}p\rceil + &\underset{0 \le s \le r}\min \ord_p\bigl(\frac{\mu_s}{(r-s)!}\bigr) \ge r + \underset{0 \le s \le r}\min \bigl(\bigl(-\frac{1}{p} - \frac{1}{p-1}\bigr) s -\frac1{p-1} (r-s)\bigr)\\
&\= r + \bigl(-\frac{1}{p} - \frac{1}{p-1}\bigr)r \= \bigl(\frac{p-1}{p} - \frac{1}{p-1}\bigr)r.   
\eal\] 
Since this valuation is non-negative and grows infinitely as $r \to \infty$, we conclude that~\eqref{difference-of-two-cartier-as-derivative} belongs to $\scrD_{0,f^\sigma}\bigl(\widehat{R[\Delta]^+}\bigr)$ and therefore $p^{-1} \scrV_\sigma = \cartier$ on $\widehat \Omega_f$.

It only remains to prove~\eqref{mu-padic-rate}. For this purpose we consider $f(\v x)=1$ in~\eqref{difference-of-two-cartier} and~\eqref{difference-of-two-cartier-as-derivative}. In this case $\scrV_\sigma$ is multiplication by the Dwork exponential $e^{x_0 + p^{-1} x_0^p}$, followed by $\cartier$ and the substitution $x_0 \mapsto -p x_0$. We shall denote $\scrV_\sigma$ simply by $\scrV$ and $\scrD_{0,f^\sigma} = x_0 \frac{d}{d x_0}+x_0$ by $\scrD$. The equality of~\eqref{difference-of-two-cartier-as-derivative} and~\eqref{difference-of-two-cartier} can be written as
\[\bal
\scrD \bigl( p^{-1} \sum_{r \ge 0} (-p x_0)^{r + \lceil \frac{u_0}p\rceil}  \mu_r \bigr) &\= p^{-1} \scrV ( x_0^{u_0}) \- p^{-1} \Gamma_p(u_0) (- p x_0)^{\lceil \frac{u_0}p\rceil} \\
&\= p^{-1} \scrV \bigl( x_0^{u_0} - \Gamma_p(u_0) x_0^{p \lceil \frac{u_0}p\rceil} e^{- x_0 - p^{-1} x_0^p}\bigr).
\eal\] 
Note that $\scrD$ is invertible on $\Q_p\lb x_0 \rb^+ = x_0 \Q_p\lb x_0 \rb$, and hence the commutation relation $\scrV \scrD = p \scrD \scrV$ can be rewritten as $p^{-1} \scrD^{-1} \scrV =  \scrV \scrD^{-1}$. Applying $\scrD^{-1}$ to the last identity and using the commutation relation, we get
\be{mu-series-via-cartier}
p^{-1} \sum_{r \ge 0} (-p x_0)^{r + \lceil \frac{u_0}p\rceil}  \mu_r  \= \scrV \scrD^{-1} \bigl( x_0^{u_0} - \Gamma_p(u_0) x_0^{p \lceil \frac{u_0}p\rceil} e^{- x_0 - p^{-1} x_0^p}\bigr).
\ee
Let 
\[
L_{\alpha,\beta} = \{ \sum_{m \ge 1} a_m x_0^m \in x_0 \Q_p\lb x_0 \rb \;|\; \ord_p(a_m) \ge \alpha m + \beta \}.
\]
Let $\alpha_0 = \frac{p-1}{p^2}-\frac1{p-1}$ and $\alpha_1=\frac{p-1}{p}-\frac1{p-1}$. Note that~\eqref{mu-padic-rate} precisely means that the series in~\eqref{mu-series-via-cartier} belongs to $L_{\alpha_1,0}$. In order to demonstrate this fact, we first notice that for any $\alpha \ge \alpha_0$ and any $\beta$ we have $\scrV: L_{\alpha,\beta} \to L_{\alpha_1,\beta}$. Indeed, since $e^{x_0+p^{-1}x_0^p} \in L_{\alpha_0,0}$ we decompose $\scrV$ into three steps and check that
\[
L_{\alpha,\beta} \overset{\cdot e^{x_0+p^{-1}x_0^p}}\to L_{\min(\alpha_0,\alpha),\beta} = L_{\alpha_0,\beta} \overset{\cartier}\to L_{p \alpha_0,\beta} \overset{x_0 \mapsto -px_0}\to L_{p \alpha_0+1,\beta}=L_{\alpha_1,\beta}.
\]
In the view of~\eqref{mu-series-via-cartier}, it now suffices to show that
\be{integral-todo-estimate}
\scrD^{-1} \bigl( x_0^{u_0} - \Gamma_p(u_0) x_0^{p \lceil \frac{u_0}p\rceil} e^{- x_0 - p^{-1} x_0^p}\bigr) \in L_{\alpha_0,0}.
\ee
It is useful to observe that $\scrD (x_0^m) = m x_0^m + x_0^{m+1}$ and 
\[
\scrD(x_0^{m} e^{- x_0 - p^{-1} x_0^p})= e^{-x_0} x_0 \frac{d}{d x_0} (x_0^{m} e^{- p^{-1} x_0^p}) = (m x_0^m - x_0^{m+p}) e^{- x_0 - p^{-1} x_0^p}.
\]
Using these two rules one can easily check that 
\be{int-monomial}
\scrD \bigl( (-1)^{u_0-1}(u_0-1)!\sum_{m=0}^{u_0-1} \frac{(-x_0)^m}{m!}  \bigr) =  x_0^{u_0} 
\ee
and 
\be{int-twisted-monomial}
\scrD \bigl( (n-1)! p^{n-1} \sum_{m=0}^{n-1} \frac{x_0^{p m}}{m!\, p^m}  e^{- x_0 - p^{-1} x_0^p}\bigr) =  - x_0^{p n} e^{- x_0 - p^{-1} x_0^p}.
\ee
Note that under $\scrD(\ldots)$ the polynomial in~\eqref{int-monomial} has integral coefficients and the series in~\eqref{int-twisted-monomial} belongs to $L_{\alpha_0,0}$ if one cuts off its constant term. We shall use~\eqref{int-twisted-monomial} with $n = \lceil \frac{u_0}p\rceil$.  Since $(-1)^{u_0}(u_0-1)!=\Gamma_p(u_0)(\lceil \frac{u_0}p\rceil-1)!p^{\lceil \frac{u_0}p\rceil-1}$, we get
\[\bal
\scrD^{-1} \bigl( x_0^{u_0} - &\Gamma_p(u_0) x_0^{p \lceil \frac{u_0}p\rceil} e^{- x_0 - p^{-1} x_0^p}\bigr)
\\
&\= (-1)^{u_0-1}(u_0-1)! \Bigl( \sum_{m=0}^{u_0-1} \frac{(-x_0)^m}{m!} - \sum_{m=0}^{\lceil \frac{u_0}{p}\rceil-1} \frac{x_0^{p m}}{m!\, p^m}  e^{- x_0 - p^{-1} x_0^p} \Bigr).
\eal\] 
Note that the constant term of the series in the right-hand side vanishes, which means that we integrated~\eqref{integral-todo-estimate} in $x_0\Q_p\lb x_0 \rb$ explicitly. Since $\Gamma_p(u_0)$ is a $p$-adic integer and $\alpha_0<0$, this series belongs to $L_{\alpha_0,0}$ due to the remarks made after~\eqref{int-monomial} and~\eqref{int-twisted-monomial}. This completes our proof of~\eqref{mu-padic-rate}.
\end{proof}

\begin{remark} One can easily define a connection on $R[\Delta]$ in a way and it commutes with the twisted derivatives. Namely, for every derivation $\delta: R \to R$ we define its action on $R[\Delta]$ as
\[
\nabla_\delta := \delta + \delta F,
\] 
where the first summand simply means that the derivation $\delta$ is applied to the coefficients and the second one means multiplication by the polynomial $\delta F (\v X) = x_0 (\delta f)(\v x)$. Formally, one can write $\nabla_\delta = e^{-F} \cdot \delta \cdot e^F$. To see that $\nabla_\delta$ commutes with the twisted derivatives, recall that $\scrD_{i,f}=e^{-F} \cdot \theta_i \cdot e^F$ and note that $\delta$ and $\theta_i$ commute. Operations $\nabla_\delta$ preserve $R[\Delta]^+$ and descend to its quotients by the images of twisted derivatives, particularly to $\Omega_f$ and $\dwork$. It is easy to check that $\nabla_\delta$ acts on $\Omega_f$ as the natural extension of $\delta$ to rational functions, the operation which we simply denoted by the same letter $\delta$ earlier in this paper. 

\bigskip

Finally, observe that the operator $\scrV_\sigma = e^{-F^\sigma} \cdot V_p \cdot e^F$ defined in Theorem~\ref{cartier-coincide-thm} commutes with the connection operators. Namely, it is obvious that $V_p$ commutes with $\delta$ as operators on power series, and after twisting by exponentials we obtain 
\[
\scrV_\sigma \cdot \nabla_\delta = \nabla^\sigma_\delta \cdot \scrV_\sigma,
\] 
where $\nabla^\sigma_\delta = \delta + \delta F^\sigma = e^{-F^\sigma} \cdot \delta \cdot e^{F^\sigma}$. This observation turns quotients of $\widehat{R[\Delta]}^+$ by twisted derivatives into crystals. 
\end{remark}

From now on we consider $R=\Z_q$ with $q=p^a$. Here we have the standard $p$th power Frobenius lift $\sigma: \Z_q \to \Z_q$ which satisfies $\sigma^a=id$. Consider the operator on power series $Z_q\lb \Delta \rb$  given by 
\[
\scrV_q := e^{-F} \circ V_p^a \circ e^F .
\]
Below we compute the traces of powers of $\scrV_q$ using a few standard tricks in $p$-adic analysis, which are basically due to Dwork. 

\begin{remark}\label{on-traces-definition} The traces are well-defined $p$-adic numbers because modulo every $p^s$ the operator $\scrV_q$ has finite-dimensional image (see the $p$-adic estimate of the matrix entries in the proof of Theorem~\ref{cartier-coincide-thm}).  Note also that the traces only depend on the mod $p$ reduction of the polynomial $F(\v X)=x_0 f(\v x)$. Indeed, if $F'(\v X)-F(\v X)=p G(\v X)$ then the respective operators on power series are conjugate $\scrV_q' = e^{- p G} \circ \scrV_q \circ e^{p G}$ and modulo each power of $p$ this identity can be written using matrices of finite size.
\end{remark}

\begin{proposition}\label{trace-formula} For all $s \ge 1$ one has
\[
(1-q^s)^{n+1} \Tr \left(\scrV_q^s \;|\; \Z_q \lb \Delta\rb \right) \= q^s \; \# Z_f(\F_{q^s}) \- (q^s-1)^n. 
\]

\end{proposition}
\begin{proof} Let $\pi \in \overline{\Q_p}$ be a number satisfying $\pi^{p-1}=-p$. We will work with Laurent series with coefficients in $R=\Q_q(\pi)$ and support in the cone $C(\Delta)$. Let $\rho: R\lb \Delta \rb \to R\lb \Delta \rb$ be the operation given by $\rho(\v X^\v u)=\pi^{u_0} \v X^\v u$. Note that $V_p = \rho^{-1} \circ \cartier \circ \rho$ and $e^{\pm F} = \rho^{-1} \circ e^{\pm \pi F} \circ \rho$, and hence
\be{conjugate-cartier}
\rho \circ \scrV_q^s \circ \rho^{-1} = e^{-\pi F} \circ \scrC_q^{s} \circ e^{\pi F} \= \scrC_q^{s} \circ \Theta_{F,s}
\ee
where we used the power series 
\be{theta-F}
\Theta_{F,s}(\v X) := \exp\bigl( \pi F(\v X) - \pi F(\v X^{q^s})\bigr) = \sum_{u \in C(\Delta)_\Z} b_{\v u} \v X^\v u.
\ee
From~\eqref{conjugate-cartier} it is clear that $\Tr(\scrV_q^s)=\sum_{\v u \in C(\Delta)_\Z} b_{(q^s-1) \v u}$. Since 
\[
\sum_{x \in \Z_q: x^{q-1}=1} x^u \= \bcs q-1, &\mbox{ if } (q-1)|u, \\ 0, &\mbox{ otherwise }, \ecs
\]  
this trace can be computed by summation of values of~\eqref{theta-F} over tuples of Teichm\"uller units in $\Z_{q^s}$:
\be{trace-formula-1}
\sum_{\v X \in \Z_{q^s}^{n+1} \; : \; x_i^{q^s-1} = 1} \Theta_{F,s}(\v X) \= (q^s-1)^{n+1} \sum_{\v u \in C(\Delta)_\Z} b_{(q^s-1) \v u}  \= (q^s-1)^{n+1} \Tr(\scrV_q^s).
\ee

To evaluate the sum on the left, consider the \emph{Dwork exponential} $\theta_p(z)=\exp(\pi z - \pi z^p)$. This series has $p$-adic radius of convergence $> 1$ and $\zeta_p := \theta_p(1) = 1 + \pi \mod {\pi^2}$ is a $p$th root of unity. For $k \ge 1$, let $\theta_{p^k}(z)=\exp(\pi z - \pi z^{p^k}) = \prod_{i=1}^{k-1}\theta_p(z^{p^i})$. The additive character $\psi_k: \F_{p^k} \to \Q_p(\pi)^\times$ given by $\psi_k(\bar x) = \zeta_p^{\Tr_{\F_{p^k}/\F_p}(\bar x)}$ is related to the Dwork exponential via $\psi_k(\bar x) = \theta_{p^k}(\Teich(\bar x))$.

Write $F(\v X) = x_0 f(\v x) = \sum a_\v u \v X^\v u$ and let $\bar F(\v X) = \sum \bar a_\v u \v X^\v u$ with $\bar a_\v u \in \F_q$ be the reduction of $F$ modulo $p$. Denote $a_{\v u}' = \Teich(\bar a_\v u)$ and $F'(\v X)=\sum a_\v u' \v X^\v u$. For any vector $\bar X =(\bar x_0,\ldots, \bar x_n) \in \F_{q^s}^{n+1}$ we have 
\[
\psi_{as}(\bar F(\bar{ \v X})) = \psi_{as}(\sum \bar a_\v u \bar {\v X}^\v u ) = \prod \theta_{q^s}(a_\v u' \Teich(\bar{\v X})^\v u) = \Theta_{F'}(\Teich(\bar{\v X})).
\]
Therefore the left-hand sum in~\eqref{trace-formula-1} for $F'$ can be evaluated as
\be{trace-formula-2}\bal
&\sum_{\v X \in \Z_{q^s}^{n+1} \; : \; x_i^{q^s-1} = 1} \Theta_{F',s}(\v X) \= \sum_{\bar{\v X} \in (F_{q^s}^\times)^{n+1}} \psi_{as}(\bar F(\bar{\v X})) \= \sum_{\bar {\v x} \in (F_{q^s}^\times)^{n}} \sum_{x_0 \in F_{q^s}}  \psi_{as}(\bar x_0 f(\bar{\v x}))\\
&\= \sum_{\bar {\v x} \in (F_{q^s}^\times)^{n}} \bcs q^s-1, &\mbox{ if } \bar f(\bar{\v x}) = 0 \\ -1 , &\mbox{ if  } \bar f(\bar{\v x}) \ne 0 \ecs \quad \= q^s \, \# Z_f(\F_{q^s}) \- (q^s-1)^n. 
\eal\ee
By Remark~\ref{on-traces-definition}, since $F'\is F \mod p$ traces of powers of $\scrV_q'=e^{-F'} \circ V_p^a \circ e^{F'}$ and $\scrV_q = e^{-F} \circ V_p^a \circ e^{F}$ are equal. Hence our claim follows from~\eqref{trace-formula-1} and~\eqref{trace-formula-2}.
\end{proof}

\begin{proof}[Proof of Theorem~\ref{pointcount}.] By Theorem~\ref{cartier-coincide-thm}, we have
\be{resolution-traces}
q^s \, \Tr(\scrC_q^s | \widehat\Omega_f) \= \Tr(\scrV_q^s | \widehat{R[\Delta]}^+) - \Tr(\scrV_q^s | \scrD_{0,f}\bigl(\widehat{R[\Delta]}^+\bigr)) \= (1-q^s) \; \Tr(\scrV_q^s | \widehat{R[\Delta]}^+).
\ee
Here the second equality follows from the commutation relation $\scrV_q^s \circ \scrD_{0,f} = q^s\, \scrD_{0,f} \circ \scrV_q^s$. Traces on $\widehat{R[\Delta]}^+$ and $R\lb \Delta\rb^+$ are the same. 
It is clear from the definition of $\scrV_q$ that for every $s \ge 1$ one has $\scrV_q^s(\v X^\v 0)= \v X^\v 0 + $ terms with $u_0 \ge 1$, and hence $\Tr(\scrV_q^s |R \lb\Delta\rb^+) \= \Tr(\scrV_q^s | R \lb\Delta\rb) - 1$. Finally, we combine~\eqref{resolution-traces} with Proposition~\ref{trace-formula} and get
\[\bal
(q^s-1)^n \, & \Tr(\scrC_q^s | \widehat\Omega_f) \= - q^{-s}(q^s-1)^{n+1} \Bigl( \Tr(\scrV_q^s | R \lb\Delta\rb) - 1 \Bigr) \\
&\= - q^{-s} \Bigl( q^s \; \# Z_f(\F_{q^s}) \- (q^s-1)^n - (q^s-1)^{n+1} \Bigr)\\
&\= (q^s-1)^n - \# Z_f(\F_{q^s}) \= \# Z_{\T^n \setminus Z_f}(\F_{q^s}).
\eal\]
\end{proof}\appendix
\section{Point counting and an alternative construction of the Cartier operator}
Suppose that $R=\Z_q$ where $q=p^a$ and $\sigma$ is the standard $p$th power Frobenius lift 
satisfying $\sigma^a=id$. Then the $a$th iteration of the Cartier operator 
$\scrC_q:=\cartier^a$ maps $\hat\Omega_f$ to itself. It follows from the 
estimate~\eqref{cartier-entries-estimate} that modulo every power $p^s$ the image of
$\cartier$ has finite rank. By this reason the trace of $\scrC_q$ is a well defined $p$-adic value.
In this section we will prove
the following 

\begin{theorem}\label{pointcount}
The trace of $(q^s-1)^n \times \scrC_q^s$ on $\hat \Omega_f$ equals 
the number of points on $\T^n \setminus Z_f$ with coordinates $x_1,\ldots,x_n\in\F^\times_{q^s}$.
\end{theorem}

\begin{remark} Since $\scrC_q$ is divisible by $q$ on the submodule of formal derivatives $U_f \subset \widehat \Omega_f$, we conclude from Theorem~\ref{pointcount} that on the quotient $Q_f = \widehat\Omega_f / U_f$ one has
\[
\Tr(\scrC_q^s | Q_f) \equiv 1+(-1)^{n+1} \#Z_f(\F_{q^s}) \; \mod {q^s}. 
\]
The term $1$ on the right corresponds to the eigenvector $1 \in \Omega_f$, which has eigenvalue $1$.
If the Hasse--Witt matrix $\beta_p(\Delta)$ is invertible, then $Q_f$ is a $\Z_q$-vector space of finite dimension and it follows from the above congruence that the polynomial
\[
\det(1- T \scrC_q | Q_f) / (1-T) \in \Z_q[T]
\]
raised to the power $(-1)^{n}$
is the unit-root part of the zeta function of the hypersuface $Z_f \subset \T^n$ over $\F_q$. Note that in our standard basis in $Q_f$ (that is, images of $\omega_\v u \in \Omega_f, \v u \in \Delta_\Z$) the operator $\scrC_q$ is given by the transpose of 
\[
\Lambda \, \sigma(\Lambda) \, \ldots \, \sigma^{a-1}(\Lambda),
\]
where $\Lambda=\Lambda_\sigma$ is the matrix from Theorem~\ref{main1}.
\end{remark}

\bigskip

We will use a resolution of the module $\Omega_f$. This construction ties our crystals with the exponential modules in the literature, e.g. in~\cite{Ba93},
and exhibits a natural lift of our Cartier operator which possesses nice properties and hence might be
useful on its own. With the lifted Cartier operator, the point counting can be done using a version of
Dwork's trace formula, which is now a standard technique in $p$-adic analysis. 

From now on $R$ is a characteristic zero ring, we will impose more assumptions when needed. Let us introduce
the auxiliary variable $x_0$ and define the subring 
\[
R[\Delta] \subset R[x_0,x_1^{\pm 1},\ldots,x_n^{\pm 1}]
\]
as the span of monomials $\v X^\v u = x_0^{u_0} \ldots x_n^{u_n}$ with $\v u \in C(\Delta)_\Z$. 
We remind the reader that throughout the paper we denoted $\v x^{\v u} = x_1^{u_1} \ldots x_n^{u_n}$
for $\v u=(u_0,\ldots,u_n)$, so now we shall use the capital letter for $\v X^\v u = x_0^{u_0} \v x^{\v u}$.
We denote $F(\v X) = x_0 f(\v x) \in R[\Delta]$. The operations 
\[
\scrD_{i, f} \= x_i \frac{\partial}{\partial x_i} + x_i \frac{\partial F(\v X)}{\partial x_i} : R[\Delta] \to R[\Delta], \quad i = 0,\ldots, n
\]
are called \emph{twisted derivatives}. Formally, we have 
$\scrD_{i,f} = e^{-F} \circ \theta_i \circ e^F$, where $\theta_i = x_i \frac{\partial}{\partial x_i}$.
Twisted derivatives commute with each other.      

Let $R[\Delta]^+$ be the free $R$-module generated by $\v X^\v u$ with $\v u \in C(\Delta)_Z^+$.
It is an ideal in $R[\Delta]$ and twisted derivatives preserve $R[\Delta]^+$. The \emph{Laplace transform}
is the $R$-linear map $\scrR : R[\Delta]^+ \to \Omega_f$ given by
\[\v X^{\v u} \mapsto (-1)^{u_0} (u_0-1)! \frac{\v x ^\v u}{f(\v x)^{u_0}} \=  (-1)^{u_0}\omega_{\v u}.
\]
This Laplace transform was basically defined in~\cite[p.244]{Ka68} and~\cite[\S 7]{Ba93}. 
See also our Remark~\ref{whyfactorials}. It is clear that $\scrR$ is surjective. 

\begin{proposition}\label{resolving-omega-f} The kernel of the Laplace transform is given by
$\scrD_{0,f}(R[\Delta]^+)$. Under the induced isomorphism
\[
\scrR : R[\Delta]^+/ \scrD_{0,f}(R[\Delta]^+) \overset{\sim}\to \Omega_f 
\] 
the twisted defivative $\scrD_{i,f}$ corresponds to the usual derivative $\theta_i$ for each $1 \le i \le n$.
\end{proposition} 
\begin{proof} Under the Laplace transform the elements $\scrD_{0,f}
(\v X^\v u) = u_0 \v X^{\v u} + x_0 f(\v x) \v X^{\v u}$ are mapped to 
\[
(-1)^{u_0}u_0! \frac{\v x ^\v u}{f(\v x)^{u_0}} + (-1)^{u_0+1} u_0!  \frac{\v x ^\v u f(\v x)}{f(\v x)^{u_0+1}} \= 0.
\]  
It is clear that these elements generate all relations in $\Omega_f$ and therefore they span the kernel of $\scrR$. 

Let $1 \le i \le n$. Since twisted derivatives commute, $\scrD_{i,f}$ maps ${\rm Ker}(\scrR)={\rm Im}(\scrD_{0,f})$ to itself. The fact that the induced map on $\Omega_f$ coincides with $\theta_i$ can be easily checked on monomials:
\[\bal
\scrR \left(\scrD_{i,f} (\v X^\v u) \right) &\= \scrR \left( u_i \v X^\v u + \theta_i F(\v X) \v X^\v u \right)\= (-1)^{u_0} (u_0-1)! \left(  u_i \frac{\v x^{\v u}}{f(\v x)^{u_0}} - u_0 \frac{\v x^{\v u} \theta_i f(\v x)}{f(\v x)^{u_0+1}}\right)\\ & \= (-1)^{u_0} \theta_i \omega_\v u = \theta_i \scrR(\v X^\v u).
\eal\]
\end{proof} 
 
\begin{corollary}\label{dwork-deRham} $R[\Delta]^+/\sum_{i=0}^{n} \scrD_{i,f}(R[\Delta]^+) \cong \dwork$.
\end{corollary}

We would like to remark that in~\cite[Theorem 7.13]{Ba93} the quotient module on the left in this corollary was identified with $H^n_{dR}(\T^n \setminus Z_f)$ under the condition that $R$ is a field and $f(\v x)$ is $\Delta$-regular. At the end of the introductory section we mentioned the relation between Dwork modules and de Rham cohomology having in mind Corollary~\ref{dwork-deRham}.

To define the Cartier operator, we turn on our usual assumptions that $\cap_s p^s R = \{0\}$ and $R$ is $p$-adically complete. Let $R\lb \Delta \rb$ be the ring of formal power series with coefficients in $R$ and support in $C(\Delta)$. The $p$-adic completion $\widehat{R[\Delta]} = \underset{s} \lim R[\Delta]/p^s R[\Delta]$ consists of power series with infinitely growing $p$-adic valuation of coefficients:  
\[
\widehat{R[\Delta]} = \left\{ \sum_{\v u \in C(\Delta)_\Z} a_\v u \v X^\v u \;:\; a_\v u \in R, \quad \ord_p(a_\v u) \to \infty \hbox{ as } u_0 \to \infty \right\} \subset R\lb \Delta \rb. 
\]
We denote by $\widehat{R[\Delta]}^+$ the ideal of power series with zero constant term ($a_\v 0 = 0$). It follows from Proposition~\ref{resolving-omega-f} that $\hat\Omega_f \cong \widehat{R[\Delta]}^+/\scrD_{0,f}\Bigl(\widehat{R[\Delta]}^+\Bigr)$.
 
\begin{theorem}\label{cartier-coincide-thm} Consider the operator on power series given by 
\[
V_p \left( \sum_{\v u} a_{\v u} \v X^{\v u} \right) \=  \sum_{\v u} (-p)^{u_0} a_{p \v u} \, \v X^{\v u}.
\]
Let $p>2$. For every $p$th power Frobenius lift $\sigma: R \to R$, the operator 
\[
\scrV_{\sigma} = e^{-F^\sigma} \circ V_p \circ e^F
\]
maps $R\lb\Delta\rb$ to itself. Operator $\scrV_\sigma$ preserves $R\lb \Delta\rb^+$ and it is divisible by $p$ on this submodule. The following commutation relation with twisted derivatives
\be{p-commutation-cartier-lift}
\scrD_{i,f^\sigma} \circ \scrV_\sigma = p \, \scrV_\sigma \circ \scrD_{i,f}
\ee
holds for each $0 \le i \le n$. 

The operator $\scrV_\sigma$ preserves $\widehat{R[\Delta]}$ and the induced map $p^{-1} \scrV_\sigma: \widehat\Omega_f \to \widehat \Omega_{f^\sigma}$ coincides with the Cartier operator $\cartier: \widehat\Omega_f \to \widehat \Omega_{f^\sigma}$ constructed in Section~\ref{sec:cartier}.
\end{theorem}

\begin{proof} Fix any $\v u \in C(\Delta)_\Z^+$ and denote $\nu_0=p\lceil \frac{u_0}{p}\rceil - u_0$. Observe that
\[\bal
V_p \Bigl( e^{x_0 f(\v x) }\v X^\v u\Bigr) &\= \sum_{n \ge 0,\; p\,|(n+u_0)} \frac{(- p x_0)^{\frac{n+u_0}p} \cartier(f(\v x)^n \v x^\v u)}{n!} \\
&\= \sum_{m \ge 0} \frac{(- p x_0)^{m+\lceil \frac{u_0}{p}\rceil} \cartier(f(\v x)^{p m + \nu_0} \v x^\v u)}{(p m + \nu_0)!}
\eal\]
and therefore 
\[\bal
\scrV_\sigma \v X^{\v u} &\= \sum_{k \ge 0} \frac{(-x_0)^{k} f^\sigma(\v x)^k}{k!} \sum_{m \ge 0} \frac{(- p x_0)^{m+\lceil \frac{u_0}{p}\rceil} \cartier(f(\v x)^{p m + \nu_0} \v x^\v u)}{(p m + \nu_0)!}\\
&\=  \sum_{r \ge 0} (- p x_0)^{r + \lceil \frac{u_0}{p}\rceil} \, \cartier \Bigl( \v x^\v u \sum_{m+k=r} \frac{f(\v x)^{p m + \nu_0} f^\sigma(\v x^p)^k}{(pm+\nu_0)! \, k! p^k}\Bigr)\\
&\= \sum_{r \ge 0} (-p x_0)^{r + \lceil \frac{u_0}p\rceil} \cartier \Bigl( \v x^\v u \sum_{s = 0}^r  \frac{G(\v x)^{r-s} f(\v x)^{ps+\nu_0}}{(r-s)!} \gamma_{p s + \nu_0} \Bigr)\\ 
\eal\]
where we substituted $f^\sigma(\v x^p)=f(\v x)^p + p G(\v x)$ and recognised the sums
\[
\gamma_{p s + \nu_0} =  \sum_{m=0}^{s} \frac{1}{(pm+\nu_0)! \, (s-m)!p^{s-m}}\\
\]
as coefficients of the \emph{Dwork exponential} $e^{x_0+\frac{x_0^p}{p}} = \sum_{n \ge 0} \gamma_n x_0^n$. The following standard estimate of their $p$-adic order 
\be{dwork-exponential-estimate}
\ord_p(\gamma_n) \ge \bigl(\frac{p-1}{p^2}-\frac{1}{p-1}\bigr)n
\ee
implies that the matrix coefficients given by $\scrV_\sigma \v X^\v u = \sum_{\v v} \scrF_{\v u,\v v} \v X^\v v$ have $p$-adic valuations bounded by
\[\bal
\ord_p (\scrF_{\v u,\v v}) &\ge r + \lceil \frac{u_0}p\rceil + \underset{0 \le s \le r}\min \Bigl( (-\frac1{p-1})(r-s) + \bigl( \frac{p-1}{p^2} - \frac{1}{p-1}\bigr)\bigl(ps+\nu_0 \bigr) \Bigr) \\
&\= r + \lceil \frac{u_0}p\rceil - \frac{r}{p-1} +  \underset{0 \le s \le r}\min \Bigl(- \frac{s}p  \Bigr) + \bigl( \frac{p-1}{p^2} - \frac{1}{p-1}\bigr) \bigr(p\lceil \frac{u_0}p \rceil - u_0\bigl)  \\
&\= \bigl( \frac{p-1}{p} - \frac1{p-1}\bigr) \bigl( r + \lceil \frac{u_0}p\rceil  \bigr) - \bigl( \frac{p-1}{p^2} - \frac{1}{p-1} \bigr)u_0 \\
& \= \bigl( \frac{p-1}{p} - \frac1{p-1}\bigr) v_0 + \bigl(\frac{1}{p-1} - \frac{p-1}{p^2}\bigr)u_0,
\eal\] 
where we used $r = v_0 - \lceil \frac{u_0}{p}\rceil$. Since this valuation is non-negative, we conclude that all $\scrF_{\v u, \v v} \in R$ and hence $\scrV_\sigma$ maps the module of formal series $R\lb \Delta \rb$ to itself. We also observe that $\scrV_\sigma$ is divisible by $p$ on $R\lb \Delta \rb^+$. Moreover, $\scrV_\sigma$ preserves $\widehat {R[\Delta]}$ because when $v_0 \to \infty$ we have $\ord_p(\scrF_{\v u, \v v} \in R) \to \infty$ uniformly in $u_0$.

\bigskip   

The commutation relation~\eqref{p-commutation-cartier-lift} follows immediately from $\theta_i \circ V_p = p \, \theta_i \circ V_p$ and the fact that $\scrD_{i,f}=e^{-F}\circ \theta_i\circ e^F$. Since $\widehat \Omega_f = \widehat{R[\Delta]}^+ / \scrD_{0,f}( \widehat{R[\Delta]}^+)$, we have a well defined induced map $p^{-1} \scrV_\sigma: \widehat\Omega_f \to \widehat\Omega_f^\sigma$. Let us show that this induced map coincides with the Cartier operator $\cartier$ defined in Proposition~\ref{cartier-def}. Since $\scrR(\v X^\v w)=(-1)^{u_0} \omega_{\v w}$, we consider 
\be{difference-of-two-cartier}\bal
 & p^{-1}  \scrV_\sigma \v X^\v u \- \sum_{\v v \in C(\Delta)_\Z^+} (-1)^{v_0+u_0} \, F_{\v u, \v v} \v X^\v v \\
 & \= \frac1p \sum_{r \ge 0} (-p x_0)^{r + \lceil \frac{u_0}p\rceil} 
 \cartier \Bigl( \v x^\v u \sum_{s = 0}^r  G(\v x)^{r-s} f(\v x)^{ps+\nu_0} 
 \frac{\gamma_{ps+\nu_0} - \delta_{s,0} \Gamma_p(u_0)}{(r-s)!}    \Bigr),
\eal\ee
where we used formula~\eqref{cartiermatrix-entries} and substitution 
$\frac{(u_0-1)!}{(\lceil \frac{u_0}{p}\rceil - 1)!}=(-1)^{u_0} p^{\lceil \frac{u_0}{p}\rceil - 1} 
\Gamma_p(u_0)$. It is easy to check that the difference~\eqref{difference-of-two-cartier} equals
\be{difference-of-two-cartier-as-derivative}
\scrD_{0,f^\sigma} \Bigl( \frac1p  \sum_{r \ge 0} (-p x_0)^{r + \lceil \frac{u_0}p\rceil} \cartier \bigl( \v x^\v u \sum_{s = 0}^r G(\v x)^{r-s} f(\v x)^{ps+\nu_0} \frac{ \mu_s }{(r-s)!} \bigr)\Bigr),
\ee
where the coefficients $\mu_s \in \Q$ are determined by the recurrence
\[
\bigl(s+\lceil \frac{u_0}p\rceil\bigr) \mu_{s} \= p^{-1} \, \mu_{s-1} + \gamma_{p s + \nu_0} - \Gamma_p(u_0) \delta_{s,0}.
\]
(The initial term $\mu_0$ is also determined by this formula and convention $\mu_{-1}=0$.) We claim that
\be{mu-padic-rate}
\ord_p(\mu_s) \ge \bigl(-\frac{1}{p} - \frac{1}{p-1}\bigr) s + 1 - \lceil \frac{u_0}p\rceil , 
\ee
and hence inside of $\scrD_{0,f^\sigma}(\ldots)$ in~\eqref{difference-of-two-cartier-as-derivative} the $p$-adic valuation of coefficients of the polynomial next to $x_0^{r + \lceil \frac{u_0}p\rceil}$ can be estimated from below as
\[\bal
-1 + r + \lceil \frac{u_0}p\rceil + &\underset{0 \le s \le r}\min \ord_p\bigl(\frac{\mu_s}{(r-s)!}\bigr) \ge r + \underset{0 \le s \le r}\min \bigl(\bigl(-\frac{1}{p} - \frac{1}{p-1}\bigr) s -\frac1{p-1} (r-s)\bigr)\\
&\= r + \bigl(-\frac{1}{p} - \frac{1}{p-1}\bigr)r \= \bigl(\frac{p-1}{p} - \frac{1}{p-1}\bigr)r.   
\eal\] 
Since this valuation is non-negative and grows infinitely as $r \to \infty$, we conclude that~\eqref{difference-of-two-cartier-as-derivative} belongs to $\scrD_{0,f^\sigma}\bigl(\widehat{R[\Delta]^+}\bigr)$ and therefore $p^{-1} \scrV_\sigma = \cartier$ on $\widehat \Omega_f$.

It only remains to prove~\eqref{mu-padic-rate}. For this purpose we consider $f(\v x)=1$ in~\eqref{difference-of-two-cartier} and~\eqref{difference-of-two-cartier-as-derivative}. In this case $\scrV_\sigma$ is multiplication by the Dwork exponential $e^{x_0 + p^{-1} x_0^p}$, followed by $\cartier$ and the substitution $x_0 \mapsto -p x_0$. We shall denote $\scrV_\sigma$ simply by $\scrV$ and $\scrD_{0,f^\sigma} = x_0 \frac{d}{d x_0}+x_0$ by $\scrD$. The equality of~\eqref{difference-of-two-cartier-as-derivative} and~\eqref{difference-of-two-cartier} can be written as
\[\bal
\scrD \bigl( p^{-1} \sum_{r \ge 0} (-p x_0)^{r + \lceil \frac{u_0}p\rceil}  \mu_r \bigr) &\= p^{-1} \scrV ( x_0^{u_0}) \- p^{-1} \Gamma_p(u_0) (- p x_0)^{\lceil \frac{u_0}p\rceil} \\
&\= p^{-1} \scrV \bigl( x_0^{u_0} - \Gamma_p(u_0) x_0^{p \lceil \frac{u_0}p\rceil} e^{- x_0 - p^{-1} x_0^p}\bigr).
\eal\] 
Note that $\scrD$ is invertible on $\Q_p\lb x_0 \rb^+ = x_0 \Q_p\lb x_0 \rb$, and hence the commutation relation $\scrV \scrD = p \scrD \scrV$ can be rewritten as $p^{-1} \scrD^{-1} \scrV =  \scrV \scrD^{-1}$. Applying $\scrD^{-1}$ to the last identity and using the commutation relation, we get
\be{mu-series-via-cartier}
p^{-1} \sum_{r \ge 0} (-p x_0)^{r + \lceil \frac{u_0}p\rceil}  \mu_r  \= \scrV \scrD^{-1} \bigl( x_0^{u_0} - \Gamma_p(u_0) x_0^{p \lceil \frac{u_0}p\rceil} e^{- x_0 - p^{-1} x_0^p}\bigr).
\ee
Let 
\[
L_{\alpha,\beta} = \{ \sum_{m \ge 1} a_m x_0^m \in x_0 \Q_p\lb x_0 \rb \;|\; \ord_p(a_m) \ge \alpha m + \beta \}.
\]
Let $\alpha_0 = \frac{p-1}{p^2}-\frac1{p-1}$ and $\alpha_1=\frac{p-1}{p}-\frac1{p-1}$. Note that~\eqref{mu-padic-rate} precisely means that the series in~\eqref{mu-series-via-cartier} belongs to $L_{\alpha_1,0}$. In order to demonstrate this fact, we first notice that for any $\alpha \ge \alpha_0$ and any $\beta$ we have $\scrV: L_{\alpha,\beta} \to L_{\alpha_1,\beta}$. Indeed, since $e^{x_0+p^{-1}x_0^p} \in L_{\alpha_0,0}$ we decompose $\scrV$ into three steps and check that
\[
L_{\alpha,\beta} \overset{\cdot e^{x_0+p^{-1}x_0^p}}\to L_{\min(\alpha_0,\alpha),\beta} = L_{\alpha_0,\beta} \overset{\cartier}\to L_{p \alpha_0,\beta} \overset{x_0 \mapsto -px_0}\to L_{p \alpha_0+1,\beta}=L_{\alpha_1,\beta}.
\]
In the view of~\eqref{mu-series-via-cartier}, it now suffices to show that
\be{integral-todo-estimate}
\scrD^{-1} \bigl( x_0^{u_0} - \Gamma_p(u_0) x_0^{p \lceil \frac{u_0}p\rceil} e^{- x_0 - p^{-1} x_0^p}\bigr) \in L_{\alpha_0,0}.
\ee
It is useful to observe that $\scrD (x_0^m) = m x_0^m + x_0^{m+1}$ and 
\[
\scrD(x_0^{m} e^{- x_0 - p^{-1} x_0^p})= e^{-x_0} x_0 \frac{d}{d x_0} (x_0^{m} e^{- p^{-1} x_0^p}) = (m x_0^m - x_0^{m+p}) e^{- x_0 - p^{-1} x_0^p}.
\]
Using these two rules one can easily check that 
\be{int-monomial}
\scrD \bigl( (-1)^{u_0-1}(u_0-1)!\sum_{m=0}^{u_0-1} \frac{(-x_0)^m}{m!}  \bigr) =  x_0^{u_0} 
\ee
and 
\be{int-twisted-monomial}
\scrD \bigl( (n-1)! p^{n-1} \sum_{m=0}^{n-1} \frac{x_0^{p m}}{m!\, p^m}  e^{- x_0 - p^{-1} x_0^p}\bigr) =  - x_0^{p n} e^{- x_0 - p^{-1} x_0^p}.
\ee
Note that under $\scrD(\ldots)$ the polynomial in~\eqref{int-monomial} has integral coefficients and the series in~\eqref{int-twisted-monomial} belongs to $L_{\alpha_0,0}$ if one cuts off its constant term. We shall use~\eqref{int-twisted-monomial} with $n = \lceil \frac{u_0}p\rceil$.  Since $(-1)^{u_0}(u_0-1)!=\Gamma_p(u_0)(\lceil \frac{u_0}p\rceil-1)!p^{\lceil \frac{u_0}p\rceil-1}$, we get
\[\bal
\scrD^{-1} \bigl( x_0^{u_0} - &\Gamma_p(u_0) x_0^{p \lceil \frac{u_0}p\rceil} e^{- x_0 - p^{-1} x_0^p}\bigr)
\\
&\= (-1)^{u_0-1}(u_0-1)! \Bigl( \sum_{m=0}^{u_0-1} \frac{(-x_0)^m}{m!} - \sum_{m=0}^{\lceil \frac{u_0}{p}\rceil-1} \frac{x_0^{p m}}{m!\, p^m}  e^{- x_0 - p^{-1} x_0^p} \Bigr).
\eal\] 
Note that the constant term of the series in the right-hand side vanishes, which means that we integrated~\eqref{integral-todo-estimate} in $x_0\Q_p\lb x_0 \rb$ explicitly. Since $\Gamma_p(u_0)$ is a $p$-adic integer and $\alpha_0<0$, this series belongs to $L_{\alpha_0,0}$ due to the remarks made after~\eqref{int-monomial} and~\eqref{int-twisted-monomial}. This completes our proof of~\eqref{mu-padic-rate}.
\end{proof}

\begin{remark} One can easily define a connection on $R[\Delta]$ in a way and it commutes with the twisted derivatives. Namely, for every derivation $\delta: R \to R$ we define its action on $R[\Delta]$ as
\[
\nabla_\delta := \delta + \delta F,
\] 
where the first summand simply means that the derivation $\delta$ is applied to the coefficients and the second one means multiplication by the polynomial $\delta F (\v X) = x_0 (\delta f)(\v x)$. Formally, one can write $\nabla_\delta = e^{-F} \cdot \delta \cdot e^F$. To see that $\nabla_\delta$ commutes with the twisted derivatives, recall that $\scrD_{i,f}=e^{-F} \cdot \theta_i \cdot e^F$ and note that $\delta$ and $\theta_i$ commute. Operations $\nabla_\delta$ preserve $R[\Delta]^+$ and descend to its quotients by the images of twisted derivatives, particularly to $\Omega_f$ and $\dwork$. It is easy to check that $\nabla_\delta$ acts on $\Omega_f$ as the natural extension of $\delta$ to rational functions, the operation which we simply denoted by the same letter $\delta$ earlier in this paper. 

\bigskip

Finally, observe that the operator $\scrV_\sigma = e^{-F^\sigma} \cdot V_p \cdot e^F$ defined in Theorem~\ref{cartier-coincide-thm} commutes with the connection operators. Namely, it is obvious that $V_p$ commutes with $\delta$ as operators on power series, and after twisting by exponentials we obtain 
\[
\scrV_\sigma \cdot \nabla_\delta = \nabla^\sigma_\delta \cdot \scrV_\sigma,
\] 
where $\nabla^\sigma_\delta = \delta + \delta F^\sigma = e^{-F^\sigma} \cdot \delta \cdot e^{F^\sigma}$. This observation turns quotients of $\widehat{R[\Delta]}^+$ by twisted derivatives into crystals. 
\end{remark}

From now on we consider $R=\Z_q$ with $q=p^a$. Here we have the standard $p$th power Frobenius lift $\sigma: \Z_q \to \Z_q$ which satisfies $\sigma^a=id$. Consider the operator on power series $Z_q\lb \Delta \rb$  given by 
\[
\scrV_q := e^{-F} \circ V_p^a \circ e^F .
\]
Below we compute the traces of powers of $\scrV_q$ using a few standard tricks in $p$-adic analysis, which are basically due to Dwork. 

\begin{remark}\label{on-traces-definition} The traces are well-defined $p$-adic numbers because modulo every $p^s$ the operator $\scrV_q$ has finite-dimensional image (see the $p$-adic estimate of the matrix entries in the proof of Theorem~\ref{cartier-coincide-thm}).  Note also that the traces only depend on the mod $p$ reduction of the polynomial $F(\v X)=x_0 f(\v x)$. Indeed, if $F'(\v X)-F(\v X)=p G(\v X)$ then the respective operators on power series are conjugate $\scrV_q' = e^{- p G} \circ \scrV_q \circ e^{p G}$ and modulo each power of $p$ this identity can be written using matrices of finite size.
\end{remark}

\begin{proposition}\label{trace-formula} For all $s \ge 1$ one has
\[
(1-q^s)^{n+1} \Tr \left(\scrV_q^s \;|\; \Z_q \lb \Delta\rb \right) \= q^s \; \# Z_f(\F_{q^s}) \- (q^s-1)^n. 
\]

\end{proposition}
\begin{proof} Let $\pi \in \overline{\Q_p}$ be a number satisfying $\pi^{p-1}=-p$. We will work with Laurent series with coefficients in $R=\Q_q(\pi)$ and support in the cone $C(\Delta)$. Let $\rho: R\lb \Delta \rb \to R\lb \Delta \rb$ be the operation given by $\rho(\v X^\v u)=\pi^{u_0} \v X^\v u$. Note that $V_p = \rho^{-1} \circ \cartier \circ \rho$ and $e^{\pm F} = \rho^{-1} \circ e^{\pm \pi F} \circ \rho$, and hence
\be{conjugate-cartier}
\rho \circ \scrV_q^s \circ \rho^{-1} = e^{-\pi F} \circ \scrC_q^{s} \circ e^{\pi F} \= \scrC_q^{s} \circ \Theta_{F,s}
\ee
where we used the power series 
\be{theta-F}
\Theta_{F,s}(\v X) := \exp\bigl( \pi F(\v X) - \pi F(\v X^{q^s})\bigr) = \sum_{u \in C(\Delta)_\Z} b_{\v u} \v X^\v u.
\ee
From~\eqref{conjugate-cartier} it is clear that $\Tr(\scrV_q^s)=\sum_{\v u \in C(\Delta)_\Z} b_{(q^s-1) \v u}$. Since 
\[
\sum_{x \in \Z_q: x^{q-1}=1} x^u \= \bcs q-1, &\mbox{ if } (q-1)|u, \\ 0, &\mbox{ otherwise }, \ecs
\]  
this trace can be computed by summation of values of~\eqref{theta-F} over tuples of Teichm\"uller units in $\Z_{q^s}$:
\be{trace-formula-1}
\sum_{\v X \in \Z_{q^s}^{n+1} \; : \; x_i^{q^s-1} = 1} \Theta_{F,s}(\v X) \= (q^s-1)^{n+1} \sum_{\v u \in C(\Delta)_\Z} b_{(q^s-1) \v u}  \= (q^s-1)^{n+1} \Tr(\scrV_q^s).
\ee

To evaluate the sum on the left, consider the \emph{Dwork exponential} $\theta_p(z)=\exp(\pi z - \pi z^p)$. This series has $p$-adic radius of convergence $> 1$ and $\zeta_p := \theta_p(1) = 1 + \pi \mod {\pi^2}$ is a $p$th root of unity. For $k \ge 1$, let $\theta_{p^k}(z)=\exp(\pi z - \pi z^{p^k}) = \prod_{i=1}^{k-1}\theta_p(z^{p^i})$. The additive character $\psi_k: \F_{p^k} \to \Q_p(\pi)^\times$ given by $\psi_k(\bar x) = \zeta_p^{\Tr_{\F_{p^k}/\F_p}(\bar x)}$ is related to the Dwork exponential via $\psi_k(\bar x) = \theta_{p^k}(\Teich(\bar x))$.

Write $F(\v X) = x_0 f(\v x) = \sum a_\v u \v X^\v u$ and let $\bar F(\v X) = \sum \bar a_\v u \v X^\v u$ with $\bar a_\v u \in \F_q$ be the reduction of $F$ modulo $p$. Denote $a_{\v u}' = \Teich(\bar a_\v u)$ and $F'(\v X)=\sum a_\v u' \v X^\v u$. For any vector $\bar X =(\bar x_0,\ldots, \bar x_n) \in \F_{q^s}^{n+1}$ we have 
\[
\psi_{as}(\bar F(\bar{ \v X})) = \psi_{as}(\sum \bar a_\v u \bar {\v X}^\v u ) = \prod \theta_{q^s}(a_\v u' \Teich(\bar{\v X})^\v u) = \Theta_{F'}(\Teich(\bar{\v X})).
\]
Therefore the left-hand sum in~\eqref{trace-formula-1} for $F'$ can be evaluated as
\be{trace-formula-2}\bal
&\sum_{\v X \in \Z_{q^s}^{n+1} \; : \; x_i^{q^s-1} = 1} \Theta_{F',s}(\v X) \= \sum_{\bar{\v X} \in (F_{q^s}^\times)^{n+1}} \psi_{as}(\bar F(\bar{\v X})) \= \sum_{\bar {\v x} \in (F_{q^s}^\times)^{n}} \sum_{x_0 \in F_{q^s}}  \psi_{as}(\bar x_0 f(\bar{\v x}))\\
&\= \sum_{\bar {\v x} \in (F_{q^s}^\times)^{n}} \bcs q^s-1, &\mbox{ if } \bar f(\bar{\v x}) = 0 \\ -1 , &\mbox{ if  } \bar f(\bar{\v x}) \ne 0 \ecs \quad \= q^s \, \# Z_f(\F_{q^s}) \- (q^s-1)^n. 
\eal\ee
By Remark~\ref{on-traces-definition}, since $F'\is F \mod p$ traces of powers of $\scrV_q'=e^{-F'} \circ V_p^a \circ e^{F'}$ and $\scrV_q = e^{-F} \circ V_p^a \circ e^{F}$ are equal. Hence our claim follows from~\eqref{trace-formula-1} and~\eqref{trace-formula-2}.
\end{proof}

\begin{proof}[Proof of Theorem~\ref{pointcount}.] By Theorem~\ref{cartier-coincide-thm}, we have
\be{resolution-traces}
q^s \, \Tr(\scrC_q^s | \widehat\Omega_f) \= \Tr(\scrV_q^s | \widehat{R[\Delta]}^+) - \Tr(\scrV_q^s | \scrD_{0,f}\bigl(\widehat{R[\Delta]}^+\bigr)) \= (1-q^s) \; \Tr(\scrV_q^s | \widehat{R[\Delta]}^+).
\ee
Here the second equality follows from the commutation relation $\scrV_q^s \circ \scrD_{0,f} = q^s\, \scrD_{0,f} \circ \scrV_q^s$. Traces on $\widehat{R[\Delta]}^+$ and $R\lb \Delta\rb^+$ are the same. 
It is clear from the definition of $\scrV_q$ that for every $s \ge 1$ one has $\scrV_q^s(\v X^\v 0)= \v X^\v 0 + $ terms with $u_0 \ge 1$, and hence $\Tr(\scrV_q^s |R \lb\Delta\rb^+) \= \Tr(\scrV_q^s | R \lb\Delta\rb) - 1$. Finally, we combine~\eqref{resolution-traces} with Proposition~\ref{trace-formula} and get
\[\bal
(q^s-1)^n \, & \Tr(\scrC_q^s | \widehat\Omega_f) \= - q^{-s}(q^s-1)^{n+1} \Bigl( \Tr(\scrV_q^s | R \lb\Delta\rb) - 1 \Bigr) \\
&\= - q^{-s} \Bigl( q^s \; \# Z_f(\F_{q^s}) \- (q^s-1)^n - (q^s-1)^{n+1} \Bigr)\\
&\= (q^s-1)^n - \# Z_f(\F_{q^s}) \= \# Z_{\T^n \setminus Z_f}(\F_{q^s}).
\eal\]
\end{proof}


\begin{thebibliography}{xx}
\bibitem{Ba93} V. Batyrev, \emph{Variations of the MHS of affine hypersurfaces in algebraic tori},
Duke Math Journal, 69 (1993)
\bibitem{BHS18} F. Beukers, M. Houben, A. Straub, \emph{Gauss congruences for
rational functions in several variables}, Acta Arithmetica 184 (2018), 341--362.
\bibitem{dwork69} B. Dwork, \emph{p-adic cycles}, Publications Math\'ematiques de l'I.H.\'E.S. 37 (1969), 27--115
\bibitem{Dwork60} B. Dwork, \emph{On the congruence properties of the zeta function of algebraic varieties}, Journal f\"ur die reine und angewandte Mathematik 203 (1960), 130--142  

\bibitem{Ha78} M. Hazewinkel, \emph{Formal groups and applications}, Academic Press, 1978
\bibitem{HLYY18} A. Huang, B. Lian, S.-T. Yau, Ch. Yu, \emph{Hasse--Witt matrices, unit roots and period integrals},
arXiv:1801.01189 [math.AG]
\bibitem{Ka68} N. Katz, \emph{On the differential equations satisfied by period 
matrices}, Publications math\'ematiques de l'I.H.\'E.S. 35 (1968), 71--106
\bibitem{Ka85} N. Katz, \emph{Internal reconstruction of unit-root F-crystals via
expansion coefficients. With an appendix by Luc Illusie} Annales scientifiques
de l'\'E.N.S 18 (1985), 245--285

\bibitem{Reich69} D. Reich, \emph{A $p$-adic fixed point formula}, American Journal of Mathematics 91, no. 3 (1969), 835--850

\bibitem{St87} J. Stienstra, \emph{Formal group laws arising from algebraic varieties}, American Journal of Mathematics 109, no. 5 (1987), 907--925 
\bibitem{MV16}  M. Vlasenko, \emph{Higher Hasse--Witt matrices}, Indagationes Mathematicae 29 (2018), 1411--1424

\end{thebibliography}
\end{document}